\def\cLin{\cate{Lin}}
\title{Automata and coalgebras in categories of species}
\author{Fosco Loregian}
\email{fosco.loregian@gmail.com}
\address{Tallinn University of Technology}
\thanks{%
  F. Loregian was supported by the Estonian Research Council grant PRG1210. The paper in its present form is an extended version of the note published in the proceedings of \textsc{cmcs}\oldstylenums{2024} as \cite{loregian2024automata}. Some of these computations were suggested by Todd Trimble, who pointed out the existence of the `Euler' derivation and proposed \autoref{ex_alg_2}, which in turn suggested a simple description of $\Spc^\Lin$ and more examples, by analogy. The author is extremely grateful to Todd for his invaluable contribution and his mathematical generosity.
 }
\NewDocumentCommand{\mlyob}{ m m m }{\mathop{\dsdtstile{#2,#3}{#1}}}
\NewDocumentCommand{\mreob}{ m m m }{\dsststile{#2,#3}{#1}}
\renewcommand{\de}{\hyperref[diff_of_spc]{\partial}}
\RenewDocumentCommand{\id}{ O{\clK} }{\text{id}_{#1}}
\RenewDocumentCommand{\Mly}{O{\clK}}{\hyperref[mly_n_mre]{\mathsf{Mly}}_{#1}}
\RenewDocumentCommand{\Mre}{O{\clK}}{\hyperref[mly_n_mre]{\mathsf{Mre}}_{#1}}
\NewDocumentCommand{\clMly}{O{\clK}}{\mathcal{M}\!\mathcal{l}\!\mathcal{y}_{#1}}
\NewDocumentCommand{\clMre}{O{\clK}}{\mathcal{M}\!\mathcal{r}\!\mathcal{e}_{#1}}
\NewDocumentCommand{\clMlyTens}{O{\clK}}{\clMly[#1]^\otimes}
\NewDocumentCommand{\clMreTens}{O{\clK}}{\clMre[#1]^\otimes}
\newcommand{\Spc}{{\hyperref[spc_n_Vspc]{\mathsf{Spc}}}}
\def\slice{/\!\!\!/}
\def\Day{\mathbin{\hyperref[ms_2]{\otimes_\mathrm{Day}}}}
\def\Top{\mathsf{Top}}
\def\textEM{Eilenberg--Moore\@\xspace}
\NewDocumentCommand{\homDay}{O{\firstblank} O{\firstblank}}{\{#1,#2\}_\mathrm{Day}}
\def\Lin{{\hyperref[es_3]{\fkL}}}
\setlist{itemsep=0pt}
\NewDocumentCommand{\pair}{ O{.5em} m m O{r} }{
\ar@<#1>[#4]^-{#2}\ar@<-#1>[#4]_-{#3}
}
\NewDocumentCommand{\adpair}{ O{.5em} m m O{r} }{
\ar@<#1>[#4]^-{#2}\ar@<-#1>@{<-}[#4]_-{#3} \ar@{}[r]|-\perp
}
\def\Due{[\oldstylenums{2}]}
\def\Uno{[\oldstylenums{1}]}
\def\Zero{[\oldstylenums{0}]}
\renewcommand{\sfP}{\hyperref[spc_n_Vspc]{\mathsf{P}}}
\def\Cyc{\hyperref[es_4]{\fkC\fky\fkc}}
\def\Exp{\hyperref[es_0]{\fkE}}
\def\ie{i.e.\@\xspace}
\def\bang{\boldsymbol{!}}
\def\bfCat{\mathbf{Cat}}
\def\Arb{\bsA\bsr\bsb}
\def\Jet{\bsJ\bse\bst}
\begin{document}
\begin{abstract}
	We study generalized automata (in the sense of Ad\'amek-Trnkov\'a) in Joyal's category of (set-valued) combinatorial species, and as an important preliminary step, we study coalgebras for its derivative endofunctor $\de$ and for the `Euler homogeneity operator' $L\circ\de$ arising from the adjunction $L\dashv\de\dashv R$. The theory is connected with, and in fact provides relatively nontrivial examples of, \emph{differential 2-rigs}, a notion recently introduced by the author putting combinatorial species on the same relation a generic (differential) semiring $(R,d)$ has with the (differential) semiring $\bbN\llbracket X\rrbracket$ of power series with natural coefficients. The desire to study categories of `state machines' valued in an ambient monoidal category $(\clK,\otimes)$ gives a pretext to further develop the abstract theory of differential 2-rigs, proving lifting theorems of a differential 2-rig structure from $(\clR,\de)$ to the category of $\de$-algebras on objects of $\clR$, and to categories of Mealy automata valued in $(\clR,\otimes)$, as well as various constructions inspired by differential algebra such as jet spaces and modules of differential operators. These theorems adapt to various `species-like' categories such as coloured species, $k$-vector species (both used in operad theory), linear species (introduced by Leroux to study combinatorial differential equations), M\"obius species, and others.
\end{abstract}
\maketitle

\setcounter{tocdepth}{1}
\tableofcontents

\section{Introduction}
Combinatorial species arose in the work of André Joyal \cite{Joyal1986foncteurs,8e80a3d617d02edd3db8014be1778638f0c16aaa} as categorification of the theory of generating functions \cite{wilf-gfology}. The idea is as simple as it is fruitful: a convenient way to study a sequence of positive integers $\bsb=(b_0,b_1,b_2,\dots)$ equipped with some combinatorial meaning is to consider them as the coefficients of a formal power series $F_{\bsb}(X)\in \bbQ\llbracket X\rrbracket$ (the \emph{generating series} or \emph{generating function} of $\bsb$), most often in \emph{exponential form}, \ie $F_{\bsb}(X) := \sum_{n\ge 0}\frac{b_n}{n!}X^n$. The properties of $F_{\bsb}(X)$ as an algebraic object reflect onto the combinatorial properties of $\bsb$ (species can be added, multiplied, functionally composed; each of these operations performed on $F_{\bsa},F_{\bsb}$ has meaning in terms of the combinatorial object described by $\bsa,\bsb$), and vice versa, the combinatorial properties of $\bsb$ (for example, the fact that its elements satisfy a certain recurrence relation) reflect on the algebraic properties of its generating series. `Generatingfunctionology' is, among other things, the study of combinatorics inspired by the manipulations of formal power series whose coefficients have combinatorial meaning.

Indeed, crafting a bijective proof to grok numerical identities in terms of bijections between finite sets is acknowledged as \emph{the} fundamental problem in enumerative combinatorics (cf.~for example the introduction of \cite{mendez1991mobius}), and generatingfunctionology is of great help in this respect: for Joyal, a `species of structure' arises as a \emph{categorification} of the notion of generating series.

A species of structure consists of a functor $F : \sfP \to \Set$ having domain the category of finite sets and bijections: instead of a countable sequence of numbers $\{b_n\mid n\ge 0\}$, a countable sequence of sets $\{F[n]\mid n\ge 0\}$; properties of the category of all such functors can now be given combinatorial meaning, combinatorial identities acquire meaning as bijective proofs (=isomorphisms of functors), and operations performed on functors express a possibly complicated object as (monoidal) product of simpler bits (we mention some of these isomorphisms in \autoref{important_comb_ids}). Among Joyal's first applications for the language of species there was a particuarly insighftul proof of Cayley's counting theorem for trees \cite{cayley}, a result which paved the way to a booming development of techniques (propelled by the support of an insider of enumerative combinatorics, and genius, as C.G. Rota) in domains such as representation theory of groups \cite{Chen_1993,0cbbef7b84b3481ef76c8e88a86f80d411b7492f,RAJAN1993197,412e8ae396fc12c5284630b51a24e521afbc5fc7}, the study of set partitions \cite{Bonetti_1992,Joni_1979,mendez1993colored}, Möbius functions \cite{mendez1991mobius,Rota_1964,Senato1997}, graph theory \cite{7a8211434eed2811547338107aa8e1aa26e0ff5f}, up to the exciting field of \emph{combinatorial differential equations} \cite{10.1007/BFb0072518,0cbbef7b84b3481ef76c8e88a86f80d411b7492f,a7260d9aa6cf11af5e8a6e16d6cb29f836d2ff04,0445243bd75f64484c47f7db18f2569031b5e3bd}.

This wealth of applications is by no means limited to the field of enumerative combinatorics; the operation of \emph{plethystic substitution} \cite{Bergeron1987,ddbd4291a067388ac9ba6b092f4e901d9fd249f9,Nava_1985} is recognized as the fundamental building block in the theory of \emph{operads} envisioned by J.P. May \cite{may1972geometry,MR1436914} and natural instances of operadic composition arise in algebraic topology and algebraic geometry \cite{Fresse2009,Getzler1998,loday2012algebraic,obrad2017}, logic and computer science \cite{gambo-joy,gambino2022monoidal,yorgey_thesis} (especially due to their link with multicategory theory \cite{Lambek1969,lambek1989multicategories}), theoretical physics \cite{getzler2009operads,getzler1994operads}, and more.

At about the same time, another application of category theory gained momentum: the idea of interpreting \emph{abstract state machines} inside general categories. The line of research initiated by Arbib--Manes \cite{Arbib1975,Pohl1970}, Goguen \cite{Goguen1975,Goguen1973,Goguen1975a}, Naudé \cite{Naude1977,Naude1979}, and developed in recent times by \cite{vanGlabbeek1999,Hermida2004,Jacobs2006,Venema2006} culminated into Ehrig's monograph \cite{Ehrig} on automata `valued' in an abstract monoidal category $\clK$.

The intuition the reader should have is that an \emph{automaton} is a span
\[\label{automatto}\vxy{
		E & A\otimes X \ar[r]^-s\ar[l]_-d & B
	}\]
whose legs represent respectively a dynamical system (yielding a representation of $A$ over a state space $X$), and a function $s$ whose r\^ole is to give a final state (or output, or answer\dots) to the computation performed by $d$.

The treatment made by \cite{Ehrig} provides a systematic, category--theoretic insight into the transition from determinism to non-determinism, that can be seen as the passage from automata in a monoidal category \cite{Meseguer1975}, to automata in the Kleisli category of an opmonoidal monad \cite{Guitart1980,Jacobs2016} (such as, for example, the probability distribution monads for convex spaces, \cite{Doberkat2007,fritz2015convex,Jacobs2018,Jacobs2010,Mateus1999} or one of its companions --the subdistribution or unnormalized distribution monad).

The category-theoretic content of such an approach to `machines' goes a long way: a tentative chronology follows, but it can only scratch the surface of an immense, often submerged, body of research.
\begin{itemize}
	\item \cite{Adamek1974,adam-trnk:automata} introduced the notion of an \emph{$F$-automaton} in order to abstract even further from the monoidal case the `dynamics' igniting the behaviour of an abstract machine; the progression in abstraction is as follows: from Cartesian machines in a category $\clK$ with finite products, i.e. spans $E \leftarrow A\times E \to B$, one goes to monoidal ones, i.e. spans $E \leftarrow A\otimes E \to B$ valued in a monoidal category $(\clK,\otimes)$; these are the objects of categories $\Mly(A,B)$. Subsequently, one abstracts the action of $A\otimes\firstblank$ on $E$ even further, using a generic endofunctor $F : \clK\to\clK$; this is the category $\Mly(F,B)$.
	\item Only few years prior, extensive work of Betti\hyp{}Kasangian \cite{Betti1981,Betti1982} and Kasangian\hyp{}Rosebrugh \cite{Kasangian1990} pushed for the adoption of ‘profunctorial’ models for automata, capable to pinpoint their behaviour, and their minimization, as a universal property \cite{Goguen1972a,Goguen1975a}.
	\item An insightful idea of Katis, Sabadini and Walters \cite{Katis1997,Katis2010} recognized that categories $\Mly(A,B)$ in a Cartesian category give rise to a composition operation
	      \[\vxy{
			      \Mly(B,C)\times\Mly(A,B) \ar[r] & \Mly(A,C)
		      }\]
	      and thus organize as the hom-categories of a bicategory $\KSW(\clK)$.\footnote{Interestingly enough, KSW category can be seen as a lax analogue of the category of `categories with endofunctor' upon which one builds the \emph{Spanier-Whitehead stabilization} of the category of (pointed) CW-complexes, a staple construction in stable homotopy theory \cite{MR0246294},\cite[Chapter I]{LurieHA}.} The bicategory so obtained can be concisely described as the bicategory of pseudofunctors, lax natural transformations and modifications $\bfB\bbN \to\clK$, where $\bfB\bbN$ is the monoid of natural numbers, regarded as a bicategory with a single object together with $\clK$. This definition extends to monoidal automata in a straightforward way, but there one loses the description as spans, given that the monoidal product isn't universal.
	\item in \cite{guitart1974remarques,Guitart1980} René Guitart introduces the bicategory $\Mac$ as a refinement of a bicategory of spans.\footnote{Note in passing that this is related to Betti, Kasangian, and Rosebrugh idea as two-sided fibrations and profunctors are well-known equivalent ways to present the same bicategory.} In \cite{Guitart1977}, Guitart proves that $\Mac$ is simply the Kleisli bicategory \cite[§4]{fiore2016relative}, \cite{gambino2022monoidal} of the 2-monad of cocompletion under lax colimits. This theme is reprised in \cite{guitart1978bimodules} where Guitart introduces the notion of \emph{lax coend} \cite{hirata2022notes,coend-calcu} as a technical preliminary to expand on the theme of \cite{Guitart1977}.
	\item Building on \cite{Ehrig}, but apparently unaware of \cite{guitart1974remarques}, R. Paré proposed in \cite{Par2010} the notion of a \emph{Mealy morphism} as a proxy between strong functors and profunctors in any $\clV$-enriched category $\clC$. The paper culminates in the impressively general and elegant\footnote{The reader suspecting that this is an overstatement shall rest with the thought that this straightforward statement bestows the bicategory $\clV$-Mly with a clear-cut universal property producing analogues, in one fell swoop, of $\KSW$ and Guitart's categories for \emph{every} suitably cocomplete base of enrichment.} result that the bicategory of $\clV$-Mealy maps is simply the Kleisli bicategory of the lax idempotent 2-monad of $\clV$-copower completion.\footnote{The reader will have noticed a repeating theme: categories that naturally arise organizing computational machines share a universal property of Kleisli type (they are initial in some sense, for ways of factoring a certain monad), and the monad is `of property type', i.e. it is a 2-monad of cocompletion under certain shapes \cite{kock1995monads,Zoberlein1976}.}
	\item In a joint work \cite{EPTCS397.1} the present author explores how KSW's `circuits' and Guitart's $\Mac$ connect via a \emph{local adjunction} \cite{Jay1988,kasangian1983cofibrations}, and can be used to enhance categorical automata into widgets `typed' over a bicategory with possibly more than one object; in short, it allows the passage from a bicategory of automata to \emph{automata in a bicategory}, drawing some ideas from Bainbridge's \cite{Bainbridge1975,Bainbridge1972}. Despite its relative obscurity, likely due to its cutting-edge nature, Bainbridge recognized and made clear the importance of bicategory theory as a foundational language for the theory of abstract automata and, in particular, proposed the idea of left/right Kan extensions along an `input scheme' to analyze behaviour and minimization.
\end{itemize}
Pushing further these ideas, building on all this work, intersects the most prolific branches of modern category theory. To sum up, we find ourselves in the following situation today: a forgotten school of category theorists hid an exciting claim behind a curtain of 2-dimensional algebra:
\begin{quote}
	A piece of \emph{formal category theory} as envisioned by \cite{Gray1974,CTGDC_1975__16_1_17_0,guitart1982logique,street1978yoneda,weber2007yoneda,Weber2016,wood1982abstract} serves as the mathematical foundation of abstract state machines.
\end{quote}
This intriguing hypothesis is scattered across various sources, often unaware of each other; it has been hinted at multiple times and continues to leave traces of its presence for those willing to follow it. We are left with a conjecture and a clear work plan: can this fundamental guiding principle be taken seriously and formalized? Whoever is willing to take up the challenge of verifying this claim is now tasked with lifting the curtain and exploring a rich fauna of categorical widgets.

The present work grafts on top of the wide branches of this overarching project, studying categorical automata theory specialized to the \emph{differential 2-rig} (a notion introduced by the author in \cite{diff2rig}) of Joyal's combinatorial species. In order to do so, it develops further the basic theory of differential 2-rigs, expounded in \cite{diff2rig}: loosely speaking, a differential 2-rig (`for the doctrine of coproducts') consists of a monoidal category $(\clR,\otimes,\de)$ satisfying the following assumptions:
\begin{itemize}
	\item each functor $A\otimes\firstblank$ and $\firstblank\otimes B$ commutes with finite coproducts, to the effect that $\clR$ embodies the structure of a categorified semiring, realized \emph{qua} category equipped with a `bilinear' tensor product;
	\item there exists a functor $\de : \clR\to\clR$ which commutes with coproducts, \ie there is a canonical natural isomorphism
	      \[\notag\de A + \de B\cong \de(A+B)\]
	      and `satisfies the Leibniz rule', \ie there is a natural isomorphism
	      \[\notag\de A\otimes B+A\otimes\de B \cong \de(A\otimes B).\]
\end{itemize}
Starting from this definition, one develops a categorification of differential algebra (intended for example in the sense of \cite{Kolchin1973,Kaplansky1976,Marker2000}), recognizing to the category of Joyal species the same role that in commutative algebra is covered by the ring of polynomials $k[X]$.

On one side, the desire to better understand the theory of differential 2-rigs forces to find examples that motivate general definitions; species constitute such an example: the category $\Spc$ of species is a presheaf topos equipped with a plethora of tightly-knit monoidal structures interacting with a differential structure; this richness implies that when used as an ambient category for monoidal/functorial automata, it gives rise to an interesting theory that, when stated at the correct level of abstraction, is `stable under small perturbations', which means that similar results to the ones presented here export without much effort to presheaf categories equipped with a plethystic substitution operation, such as \emph{coloured} species \cite{mendez1993colored}, \emph{linear} species (both in the sense of \cite{10.1007/BFb0072518} and in the sense of $k\emdash\Mod$-enriched, \cite{aguiar2010monoidal,Getzler1998}), \emph{M\"obius} species \cite{mendez1991mobius}, nominal sets \cite{Pitts2013},\dots{} and it allows to predict what happens when abstract automata are interpreted in a differential 2-rig other than $\Spc$, generalizing \autoref{its_diffe}.

On the other hand, the desire to understand better the theory of combinatorial species forces one to regard the category $\Spc$ as an object of a larger universe of `2-rigs'; a fundamental feature of $\Spc$ is that its derivative endofunctor $\de$ admits both a left and a right adjoint denoted $L$ and $R$. The categories of automata
\[\Mly[\Spc](\de,B),\Mly[\Spc](L,B),\Mly[\Spc](L\de,B),\Mly[\Spc](\de L,B)\]
all constitute interesting examples of categories of automata, yielding a `dynamical system' interpretation for  diagrams of the form $X\leftarrow \de X\to B$, $X\leftarrow L\de X\to B$, etc., at the same time motivating a study of the categories of the endofunctor algebras for such functors (there is to date no reference for such a study).

The present work draws from both the desire to understand $\Spc$ and $\clMlyTens[\Spc]$ as (differential) 2-rigs, and the desire to understand (differential) 2-rigs through specific examples; let's now outline more precisely the structure of the paper.
\subsection{Outline of the paper}
We start introducing the category of species and categories of automata in the sense of \cite{adam-trnk:automata}. The material on species that we need is classical, drawing upon various sources such as \cite{bergeron1998combinatorial,gambo-joy,yorgey_thesis,yeh1985combinatorial}; in \autoref{mly_n_mre} we rework an equally `classical' construction of the categories $\Mly(F,B)$ and $\Mre(F,B)$, drawing from \cite{Ehrig,Guitart1980}, without being afraid of defining objects via universal construction, wherever possible.\footnote{It is in our best interest to state the basic definition at the most formal category-theoretic level fathomable, as our leading intuition about `objects of Mealy and Moore type' is that they are models of a certain $\Cat$-enriched sketch in the sense of \cite{borceux1998theory}, that as such can be valued in any bicategory of choice. \autoref{prob_1} makes this more precise and paves the way for further discussion.} In \autoref{omega_limit}, we introduce the concept of `$\omega$-differential limit', as an intuition for what the terminal object in $\Mly(F,B)$/$\Mre(F,B)$ should represent; the terminology is somewhat borrowed from ergodic theory (specifically, the notion of $\omega$-limit, see \cite[Def. 1.12]{book_91686061}). Later, in \Cref{fib_propes}, we thoroughly explore the fibrational properties of the $\Mly$ construction, yielding the 2-fibration of the \emph{total Mealy 2-category} $\textbf{Mly}$, along with two-sided fibrations \cite{street1980fibrations} $\clMly$/$\clMre$ allowing to consider all dynamics and all outputs at the same time, coherently. In particular, we show that $\clMly$/$\clMre$ are the total categories of a certain opfibration of endofunctor coalgebras, in the sense of \cite{fib_of_alg}, which has the universal property of a coalgebra object (an inserter, \cite{2catlimits}, of the form $\textsf{Ins}(1,R)$ for a certain endofunctor $R$, cf.~\autoref{its_always_a_foa}) In \Cref{mon_tot_mach} we define the \emph{monoidal Mealy fibration} as a particular instance of this construction. The fundamental result of \cite{Katis1997}, defining the KSW category of a monoidal category $(\clK,\otimes)$ arises (\autoref{its_promonad}) when the profunctor associated to the monoidal Mealy two-sided fibration carries the structure of a promonad, of which $\text{KSW}(\clK,\otimes)$ is the Kleisli object. In \autoref{its_prese} we address the issue of lifting accessibility from $\clK$ to $\Mly$/$\Mre$, consolidating the idea that nice properties of the ambient category lift easily to its category of automata.

The first central result of the paper is that assuming $\clK$ is a differential 2-rig in the sense of \cite{diff2rig}, $\clMly$ and $\clMre$ are differential 2-rigs as well: an upshot of \cite{diff2rig} is that differential structures on a category are `difficult to create', and yet categories of $\clK$-valued automata consistute additional examples of differential 2-rigs, simply but not trivially related to $\clK$. Moreover, the adjoints to $\de : \clK\to\clK$ lift to adjoints to $\bar\de$. This paves the way for studying the theory of scopic 2-rigs in two directions: which properties of $(\clK,\de)$ are ensured by the fact that it is scopic, and which constructions on $\clK$ output scopic 2-rigs? We present some results in this direction in \autoref{prop_1}, \ref{scopic_r_fancy}.

The second central result of the paper (not unrelated to the first, given the centrality of endofunctor algebras for the construction of $\clMlyTens$) is \autoref{anchelui_diffrig}, \ref{the_taylor}, where we prove that given a differential 2-rig $(\clK,\otimes,\de)$ the category of $\de$-algebras is a differential 2-rig as well. But then, on the category of $\de$-algebras there is a derivation (acting essentially applying $\de$ to the carrier of an algebra), which in turn has a category of algebras, over which there is a derivation\dots{} This construction yields an opchain of forgetful functors, cf.~\Cref{furetto},
the limit of which we dub $\Jet[\clK,\de]$; this category stands to $\clK$ in a relation that, from a distance, seems vaguely analogous to the relation between a manifold $M$ and its \emph{jet bundle}, \cite[Chapter IV]{Kolar1993}.

Then we turn to the task of studying (Mealy) automata in species, focusing on the particular case where $\clK$ is the category of \Cref{sec_species}; given its structure of differential 2-rig, we are particularly interested in studying \emph{differential dynamics}, i.e. in studying categories $\Mly[\Spc](F,B)$ where the generator $F$ of dynamics is induced by the derivative functor. Given the results in \cite{rajan1993derivatives}, recalled in \autoref{triple_der_adj}, there is plenty of choice for such $F$'s: the triple of adjoints $L\dashv \de\dashv R$ generates four functors, a comonad-monad adjunction $L\de\dashv R\de$ and a monad-comonad adjunction $\de L \dashv \de R$ (paying tribute to the `twelvefold way' of \cite{Stanley2000wc}, we dub the study of this quadruple of pairwise adjoint functors the `fourfold way'); each of these adjunctions generate monads or comonads $R\de L\de$, $L\de R\de$,$\de R\de L$, $\de L\de R$ (and all these are finitely accessible functors because $R$ is). \Cref{other_flavs} extends the results of the paper to other `species-like' categories: coloured (=multisorted) species, linearly ordered species, nominal sets.
\begin{remark}
	The paper in its present form is an extended version of the note published in the proceedings of \textsc{cmcs}\oldstylenums{2024} as \cite{loregian2024automata}; the following results are not present in that paper and should be considered new contributions.

	\begin{itemize}
		\item \Cref{struc_of_algD}, where we study the properties of $\de$-algebras, with particular interest in the possibility of making $\Alg(\de)$ a differential 2-rig on its own (cf.~\autoref{anchelui_diffrig}) and the forgetful functor a (strict) differential 2-rig morphism.
		\item \Cref{lifta_alle_monadi}, where the previous results are adapted to lifting $\de$ to the \textEM categories of $R\de,\de L$ (but the lifting is not to a monoidal category, cf.~\autoref{non_lo_era}).
		\item \Cref{smut_n_scopic}, where we investigate more closer the formal consequences of assuming that the derivative functor $\de$ of a differential 2-rig $\clR$ admits a right adjoint, cf.~\autoref{def_smooth_2rig}, and both a right \emph{and a left} adjoint (respectively, we call such $\clR$'s \emph{right scopic}, \emph{left scopic}, and \emph{scopic}, cf.~\autoref{scopic_2_rig}); we define the `Arbogast algebra' of a differential 2-rig in \autoref{arbogast_algebra}.
		\item \Cref{other_flavs}, where we sketch how to extend the major results proved so far for usual species to various `species-like' categories such as coloured (=multisorted) species, linearly ordered species, nominal sets, etc.
	\end{itemize}
\end{remark}

\section{Combinatorial species}\label{sec_species}
Although we define the category of species from first principles, we refrain from giving a really self-contained presentation of the theory (and more importantly, we cut most ties with combinatorics, relying solely on category theory). For this reason, we advise the reader to consult external resources; although Joyal's original work \cite{8e80a3d617d02edd3db8014be1778638f0c16aaa,Joyal1986foncteurs} remains unparalleled in terms of insight, there are excellent introductory texts and surveys on the category of species in \cite{bergeron1998combinatorial,gambo-joy,yorgey_thesis,yeh1985combinatorial}.

The typical object of the category of species consists of a countable family $\{X_n\mid n\ge 0\}$ of sets, each of which is a (left) $S_n$-set, where $S_n$ denotes the group of permutations of the set with $n$ elements. Choosing degreewise equivariant maps as morphisms, this defines the category $\Spc$ of \emph{combinatorial species}.

After defining $\Spc$ in \autoref{spc_n_Vspc}, we recall how its various monoidal structures interact with each other, with particular attention to the \emph{Day convolution} monoidal structure, cf.~\autoref{various_mono}.\ref{ms_2}, and to the \emph{differential 2-rig} structure, in the sense of \cite{diff2rig} that $\Spc$ carries,\footnote{We recall the definition in \autoref{diff_of_spc}. A differential 2-rig is, roughly, a monoidal category with coproducts, and equipped with a functor $\de$ that satisfies a categorified Leibniz rule, $\de(A\otimes B)\cong \de A\otimes B + A\otimes\de B$.} stressing in particular the fact that the derivative functor $\de : \Spc\to\Spc$ has both a left and a right adjoint; a number of formal consequences follow from this fact, which means that there is a `synthetic theory' of categories that behave like species, categories that in \autoref{scopic_2_rig} we call \emph{scopic}\footnote{From the Proto-Indo-European root \emph{*spe\'{k}-} derived the Latin word \emph{speci\=es} and the Greek verb \emph{skop\'eo}, related to the verb `to see'.} and that we study in \autoref{scopic_r_fancy}, \autoref{prop_1}, \autoref{mangusta}.
\begin{definition}[Species and $\clV$-species]\label{spc_n_Vspc}
	Let $S$ be a set, and $\clV$ a \emph{Bénabou cosmos} (=a symmetric monoidal closed category admitting all limits and colimits, cf.~\cite[p.\@ 1]{Street1974}). Let $\sfP[S]$ denote the free symmetric monoidal category on $S$, regarded as a discrete category.

	The category $(S,\clV)\emdash\Spc$ of \emph{($S$-colored) $\clV$-species} is defined as the category of functors $\fkF : \sfP[S] \to \clV$, and natural transformations.
\end{definition}
All along the paper we will particularly be interested in the category $(1,\Set)\emdash\Spc$ (that we dub simply $\Spc$), where $1$ is a singleton set.\footnote{Other possible choices for $\clV$ are: the category $\Mod_R$ of modules over a ring $R$ (if $R$ is a field, we call a $\Mod_k$-species just a \emph{vector species}, see \cite{loday2012algebraic} for a comprehensive introduction) or the category $\Top_*$ of pointed topological spaces equipped with the smash product \cite[3.6.2]{strom2011modern} (for applications to algebraic topology, see e.g. \cite{markl2007operads}; for a broader notion of operad cf.~the excellent readings \cite{curiennone,Trimbled}). In the majority of references on the subject a $\Mod_k$-species is called a \emph{linear} species; however, in \cite{a0e13a522d655a338dcd3eb0d86f31d83d15c9bd} the term has a different meaning, and the clash of notation might create confusion in the present work.} See however \autoref{other_flavs} for a brief outline of how the results in this paper extend to colored species (\autoref{colo_spc}) and other `species-like' categories, \autoref{sec_linspc}, \ref{sec_ordspc}, \ref{sec_mobspc}.

The theory of combinatorial species can only be understood when one appreciates that $\sfP$ and $\Spc$ enjoy multiple different universal properties at the same time:
\begin{remark}[A number of universal properties for $\sfP$]\label{some_ups_P}\leavevmode
	\begin{enumtag}{ups}
		\item\label{ups_1} The category $\sfP=\sfP[1]$ is the \emph{groupoid of natural numbers}, having as objects the nonnegative integers, and where the set of morphisms $n\to m$ consists of the set of bijections between $[n]$ and $[m]$, if $[n] = \{1,\dots,n\}$ is an $n$-set (so in particular $\Zero$ is the empty set, and each $\sfP(n,m)$ is empty if $n\ne m$). By construction, composition is only defined between endomorphisms, and it coincides with the composition of permutations, as soon as $\sfP(n,n)$ is recognized as the symmetric group $S_n$.
		\item\label{ups_2} Again by construction, the category $\sfP$ is the skeleton of the \emph{groupoid of finite sets} $\Bij$, the category having objects the finite sets $A,B,\dots$ and morphisms $A\to B$ the set of all bijections between $A$ and $B$ (so, in particular, if $A$ and $B$ do not have the same cardinality, $\Bij(A,B)$ is empty).
		\item\label{ups_3} 	Note that $\Bij$ is in turn the core (=the largest subcategory that is a groupoid) of the category $\Fin$ of all finite sets and functions.
		\item\label{ups_4} The (commutative, i.e. strictly symmetric) monoidal structure on $\sfP$ is given by sum of natural numbers, i.e. $[n]\oplus[m]=[n+m]$, the unit is $\Zero$ and permutations act by juxtaposition. (In the non-skeletal model $\Bij$ of the previous point, $A\oplus B$ is the disjoint union of $A$ and $B$, but the monoidal structure is \emph{not} coproducts.)
	\end{enumtag}
\end{remark}
In the following we denote a species in Gothic fraktur as $\fkF,\fkG,\fkH,\dots : \sfP \to\Set$ and call an element $s\in\fkF[n]$ a \emph{species of $\fkF$-structure}.
\begin{corollary}
	Denote $BG$ the group $G$ regarded as a single-object category. The universal property of $\sfP$ entails that there is an isomorphism of categories $\sfP\cong\sum_n BS_n$ where the right-hand side is the coproduct in the category of groupoids; as a consequence $\Spc\cong \prod_n\Set^{S_n}$ where each $\Set^{S_n}$ is the category of left $S_n$-set:
	\[\textstyle\Cat(\sfP,\Set)\cong\Cat\big(\sum_{n\ge 0} BS_n,\Set\big)\cong \prod_{n\ge 0}\Cat\big(BS_n,\Set\big).\label{repre_repre}\]
	As a consequence, species can equivalently be presented as a \emph{symmetric sequence} $\{X_n\mid n\ge 0\}$ of sets, each of which is equipped with a (left) $S_n$-action $S_n\times X_n \to X_n$.
\end{corollary}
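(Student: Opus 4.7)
The plan is to break the statement into three independently verifiable pieces and then glue them: \textbf{(i)} the groupoid isomorphism $\sfP \cong \sum_n BS_n$, \textbf{(ii)} the hom-preservation identity $\Cat(\sum_n C_n, \Set) \cong \prod_n \Cat(C_n, \Set)$, and \textbf{(iii)} the identification $\Cat(BS_n,\Set)\cong \Set^{S_n}$.

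For \textbf{(i)}, I would appeal directly to the presentation of $\sfP$ recalled in \autoref{some_ups_P}.\ref{ups_1}: its objects are the non-negative integers $[n]$, the hom set $\sfP([n],[m])$ is empty when $n\ne m$, and $\sfP([n],[n])=S_n$ with composition equal to group multiplication in $S_n$. The coproduct $\sum_n BS_n$, formed in the 1-category of small groupoids (equivalently in $\Cat$, since coproducts of groupoids are computed as in $\Cat$), has one object per summand, hom sets zero between distinct summands, and hom set $S_n$ at the $n$-th object; the evident bijection on objects and homs is a functor, and it is a strict isomorphism. (The fact that this ignores the symmetric monoidal structure of $\sfP$ is intentional: the corollary concerns only the underlying category.)

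For \textbf{(ii)}, this is a standard consequence of the universal property of the coproduct: a functor out of $\sum_n C_n$ is uniquely determined by, and freely specified by, its restrictions along the coproduct injections $C_n \hookrightarrow \sum_n C_n$, and likewise for natural transformations. Combining (i) and (ii) yields
\[
	\Spc \;=\; \Cat(\sfP,\Set) \;\cong\; \Cat\bigl(\textstyle\sum_n BS_n,\Set\bigr) \;\cong\; \textstyle\prod_n \Cat(BS_n,\Set).
\]
Step \textbf{(iii)} unpacks $\Cat(BS_n,\Set)$: a functor $F\colon BS_n\to\Set$ consists of a single set $X:=F(\ast)$ together with, for each $\sigma\in S_n$, an endomap $F(\sigma)\colon X\to X$; functoriality forces $F(\sigma\tau)=F(\sigma)\circ F(\tau)$ and $F(e)=\id[X]$, which is precisely the data of a left $S_n$-action on $X$, and a natural transformation is exactly an equivariant map. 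Hence $\Cat(BS_n,\Set)\cong \Set^{S_n}$, and substituting factorwise gives the final $\prod_n \Set^{S_n}$.

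There is no serious obstacle here; the only thing to be careful about is distinguishing the statement \emph{qua} 1-categories from any stronger claim about the monoidal structure on $\sfP$ (the coproduct $\sum_n BS_n$ does not come with a canonical monoidal structure matching the one on $\sfP$, but that is irrelevant to the corollary). The symmetric sequence description then follows by reading off an object $\fkF\in\Spc$ from the tuple $(X_n)_{n\ge 0}$ where $X_n:=\fkF[n]$ with its inherited $S_n$-action.
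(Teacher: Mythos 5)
Your proposal is correct and follows essentially the same route as the paper, whose entire argument is the displayed chain $\Cat(\sfP,\Set)\cong\Cat(\sum_n BS_n,\Set)\cong\prod_n\Cat(BS_n,\Set)$ that you unpack into steps (i)--(iii). The only cosmetic difference is that you justify $\sfP\cong\sum_n BS_n$ from the explicit presentation of \autoref{some_ups_P}.\ref{ups_1} rather than from the universal property of $\sfP$ as a free symmetric monoidal category, which is immaterial here.
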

\begin{definition}[Change of base for species]\label{ch_o_bases}
	Let $\clV$ be a monoidal category monadic over $\Set$ via a functor $K : \clV\to\Set$ which is lax monoidal (for example the forgetful functor $U : \Mod_R\to\Set$); then there is a \emph{base change} adjunction $F_* : \Spc \rightleftarrows \clV\emdash\Spc : U_*$ induced through the free-forgetful adjunction $F\dashv U$. For example, if $F : \Set \to\Mod_k$ is the free $k$-vector space functor, we denote $k\langle \fkH\rangle$ the vector species $F_*\fkH$ induced by a $\Set$-species $\fkH$: we consider the vector space having the species of $\fkH$-structure as basis vectors.
\end{definition}
\begin{example}[Some important species]\label{exam_species}
	In various parts of the present work we will consider the following species.
	Many more examples can be found in \cite[Ch. 1]{bergeron1998combinatorial}.
	\begin{enumtag}{es}
		\item\label{es_0} Given an object $V$ of $\clV$, there is a unique symmetric monoidal $\clV$-species $c_V$ sending $[n]$ to $V^{\otimes n}$. If $V=I$ is the monoidal unit, $c_I$ is called the `exponential species' $\Exp$. The exponential $\Set$-species is just the constant functor at the terminal object.\footnote{In a serendipitous choice, the notation $\Exp$ for this species hints at the same time that $\Exp$ is the species of sets, or \emph{éspèce des ensembles}, and that it's an analogue of the exponential function, as $\de \Exp[n] = \Exp[n]$, for the derivative functor of \autoref{diff_of_spc}.}
		\item\label{es_1} The species $\wp$ of \emph{subsets} sends an $n$-set $A$ to the $2^n$-set of all its subsets; a permutation acts in an obvious way, since a bijection $\sigma : A\to A$ induces a bijection $\sigma^* : 2^A\to 2^A$ by functoriality.
		\item\label{es_2} The species $\Lin$ of \emph{total orders} 
		sends $[n]$ to the set of total orders on $[n]$, identified with the \emph{set} $|S_n|$ of bijections of $[n]$, over which $S_n$ acts by left multiplication.
		\item\label{es_3} The species $\fkS$ of permutations sends each finite set $[n]$ into the (carrier of the) symmetric group on $n$ letters, $S_n$. The symmetric group acts on itself by conjugation: if $\tau \in S_n$, $\sigma : S_n \to S_n$ is the map sending $\tau\mapsto \sigma\tau\sigma^{-1}$.
		\item\label{es_4} The species $\Cyc$ of \emph{oriented cycles} sends a finite set $[n]$ to the set of inequivalent (i.e. not related by a cyclic permutation) ways to sit $n$ people at a round table, or more formally, in the set of \emph{cylic orderings} of $\{x_1,\dots,x_n\}$. As $\Cyc[n]$ identifies with the set of cosets $S_n/C_n$ ($C_n$ the cyclic group), one derived that $|\Cyc[n]| = (n-1)!$.
	\end{enumtag}
\end{example}
The category of species exhibits a fairly rich structure that we now schematically review.\footnote{An important additional universal property we do not need in our analysis is that $\Spc$ is a Grothendieck topos, precisely the classifying topos \cite[Ch. VIII]{mac1992sheaves} for $\clP$-torsors, where $\clP$ is the category $\sfP$ regarded as a groupoid.}
\begin{proposition}\label{up_spc_1}
	$\Spc$ is the free cocompletion $\widehat{\sfP}$ under small colimits \cite[Remark 2.29]{Ulmer1968} of $\sfP^\op$ (which is isomorphic to $\sfP$, being a groupoid); as such, for every cocomplete category $\clD$ there is an equivalence of categories
	\[\Cat(\sfP,\clD)\cong \{\text{colimit preserving functors } \Spc \to \clD\}\]
	given by `Yoneda extension' \cite[Ch. 2]{coend-calcu}.
\end{proposition}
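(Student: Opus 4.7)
The plan is to identify this statement as an instance of the universal property of presheaf categories as free small-cocompletions, combined with the self-duality of groupoids. Concretely, for any small category $\clC$, the presheaf category $\widehat{\clC} := [\clC^\op, \Set]$ is the free cocompletion of $\clC$ under small colimits: the Yoneda embedding $y_\clC : \clC \hookrightarrow \widehat{\clC}$ is dense, and for every cocomplete $\clD$ precomposition with $y_\clC$ establishes an equivalence
\[ \mathrm{Cocts}(\widehat{\clC},\clD) \xrightarrow{\;\sim\;} \Cat(\clC,\clD), \]
whose pseudo-inverse sends $F : \clC \to \clD$ to its pointwise left Kan extension $\mathrm{Lan}_{y_\clC} F$ along the Yoneda embedding. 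This is a classical fact (e.g.~\cite[Thm.~5.35]{Ulmer1968} or its enriched version in Kelly).

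First I would invoke this universal property with $\clC = \sfP^\op$: the free cocompletion of $\sfP^\op$ is, by definition, $[(\sfP^\op)^\op,\Set] = [\sfP,\Set] = \Spc$. Then I would observe that $\sfP$ is a groupoid, so the functor sending each morphism to its inverse delivers a canonical isomorphism $\sfP \cong \sfP^\op$; transporting the universal property along this iso gives
\[ \Cat(\sfP,\clD) \simeq \Cat(\sfP^\op,\clD) \simeq \mathrm{Cocts}(\Spc,\clD), \]
which is the displayed equivalence. The pseudo-inverse is explicitly the Yoneda extension: given $F : \sfP \to \clD$ one forms $\mathrm{Lan}_{y} F$, computed pointwise by the coend formula $\tilde F(\fkX) = \int^{[n] \in \sfP} \fkX[n] \cdot F[n]$, using that $\clD$ is cocomplete and that copowers by sets exist (as coproducts of copies of $F[n]$).

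The only genuinely delicate step is the correct bookkeeping of opposites: one must be careful that the Yoneda embedding involved sends $[n] \in \sfP$ to the representable $\sfP(-,[n]) : \sfP^\op \to \Set$, and that the identification $\sfP \cong \sfP^\op$ used to re-present this as an object of $\Spc = [\sfP,\Set]$ is the inversion isomorphism; once this is fixed, the equivalence is simply the general presheaf-cocompletion statement specialised to a groupoid. No essentially new content beyond unravelling Kan extensions along the Yoneda embedding is needed, and the proof is really a one-line invocation of the universal property of $\widehat{\sfP}$ plus the self-duality of $\sfP$.
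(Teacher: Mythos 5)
Your argument is correct and is exactly the route the paper intends: the proposition is stated as an instance of the classical universal property of presheaf categories as free small-cocompletions (cited to Ulmer and to Yoneda extension), combined with the self-duality $\sfP\cong\sfP^\op$ of a groupoid, and the paper offers no further proof beyond this invocation. Your careful bookkeeping of the opposites and the explicit coend formula for the Yoneda extension only makes explicit what the paper leaves to the references.
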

\begin{proposition}\label{up_spc_2}
	Following \cite{adamek2008analytic,Hasegawa2002,Joyal1986foncteurs} $\Spc$ is the (nonfull) subcategory of \emph{analytic endofunctors} of $\Set$, i.e. those endofunctors $F : \Set\to\Set$ such that, if $J : \Bij\to\Set$ is the tautological functor $[n]\mapsto [n]$, the left Kan extension of $FJ$ along $J$ coincides with $F$. The usual coend formula \cite[X.5,6]{working-categories} to express $\Lan_JFJ$ entails that $F$ is analytic if and only if it acts on a set $X$ as
	\[FX\cong \int^n F[n] \times X^n\]
	i.e. if and only if $F$ admits a `Taylor expansion' $\sum_{n=0}^\infty F[n]\frac{X^n}{n!}$; hence the name. The series $g_F(X)=\sum_{n=0}^\infty |F[n]|\frac{X^n}{n!}\in\bbQ\llbracket X\rrbracket$, where $|S|$ denotes the cardinality of a set $S$, is called the (exponential) \emph{generating series} \cite[§1.2]{bergeron1998combinatorial} of the species $F$.
\end{proposition}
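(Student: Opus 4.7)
The plan is to exhibit the equivalence by computing the left Kan extension explicitly via the coend formula, and then read the resulting functor as a Taylor expansion. Using \ref{ups_2} to identify a species $\fkF$ with a functor $\Bij\to\Set$, the universal coend formula for $\Lan_J$ gives
\[
(\Lan_J\fkF)(X) \;=\; \int^{n\in\Bij}\fkF[n]\times\Set(J[n],X) \;=\; \int^{n}\fkF[n]\times X^{n}.
\]
Since $J$ is fully faithful, $(\Lan_J\fkF)\circ J\cong \fkF$, so $\hat\fkF := \Lan_J\fkF$ automatically satisfies the analyticity condition $\Lan_J(\hat\fkF\circ J)\cong\hat\fkF$. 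Conversely, every analytic $F$ is by definition the Kan extension of the species $FJ$. These two assignments are mutually inverse up to isomorphism, yielding the desired correspondence on objects.

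For the Taylor-series interpretation, the decomposition $\Bij\cong\coprod_n BS_n$ from \ref{ups_1} turns the single coend into a coproduct of coequalizers, one per $n$, yielding
\[
\hat\fkF(X)\;\cong\;\coprod_{n\ge 0}\fkF[n]\times_{S_n}X^{n},
\]
where $S_n$ acts on $\fkF[n]$ on the left (via the species structure) and on $X^{n}$ on the right by permuting coordinates. Passing to cardinalities (restricting to the open subset of injective tuples, where the action of $S_n$ on $X^n$ is free) gives $\sum_{n\ge 0}|\fkF[n]|\,|X|^{n}/n!$, which is precisely the generating series in the statement and justifies the name \emph{analytic}.

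The subtlety behind the qualifier \emph{nonfull} concerns morphisms: while Kan extension is faithful on natural transformations of species (restricting along $J$ recovers a morphism of species from one of its Kan extension, since $\hat\fkF\circ J\cong\fkF$), the image is not closed under arbitrary natural transformations in $[\Set,\Set]$. The cleanest way to pin down the image is to invoke Joyal's classical identification with \emph{weakly Cartesian} natural transformations, \ie those whose naturality squares at every morphism of $\Set$ (and not only at bijections) are pullbacks; verifying that these are exactly the transformations coming from species morphisms is the main technical obstacle, since one direction amounts to a direct check that Kan extensions along the fully faithful inclusion of a groupoid into $\Set$ produce weakly Cartesian transformations from the coend presentation, while the other direction requires a local reconstruction of a species morphism by evaluating a weakly Cartesian transformation at injective tuples.
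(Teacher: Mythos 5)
The paper offers no proof of this proposition: it is stated as a recollection of results from the cited sources (Joyal, Ad\'amek--Velebil, Hasegawa), so your argument can only be judged on its own terms. Your coend computation $\Lan_J\fkF(X)\cong\int^n\fkF[n]\times X^n\cong\coprod_n\fkF[n]\times_{S_n}X^n$ is correct, and your last paragraph correctly locates the content of the word \emph{nonfull} in the identification of the image of $\Lan_J$ on morphisms with the weakly Cartesian transformations (which you flag as the main technical point but do not prove).

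There is, however, a genuine gap at the heart of your ``mutually inverse assignments''. You assert that $J:\Bij\to\Set$ is fully faithful and deduce $(\Lan_J\fkF)\circ J\cong\fkF$. But $J$ is faithful and \emph{not} full: $\Bij([2],[2])=S_2$ has two elements while $\Set([2],[2])$ has four. The deduced isomorphism fails as well: your own formula gives $(\Lan_J\fkF)[m]\cong\coprod_n\fkF[n]\times_{S_n}[m]^n$, and already for $\fkF=y\Uno$ one finds that $\Lan_J(y\Uno)$ is the identity endofunctor of $\Set$, so $(\Lan_J y\Uno)[2]=[2]$ has two elements while $y\Uno[2]=\varnothing$. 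Consequently restriction along $J$ does \emph{not} recover a species from its analytic functor, the two assignments are not mutually inverse, and you have not verified that $\Lan_J\fkF$ satisfies the analyticity condition as literally stated (namely $\Lan_J(FJ)\cong F$ for $F=\Lan_J\fkF$), since that verification is exactly the false isomorphism. (This is also why the cited sources define ``analytic'' as ``isomorphic to $\Lan_J G$ for \emph{some} species $G$'' rather than via $FJ$.) The standard repair, due to Joyal, is to recover $\fkF[n]$ inside $(\Lan_J\fkF)[n]$ as the subset of \emph{generic} elements --- those not in the image of $(\Lan_J\fkF)(i)$ for any non-bijective injection or, equivalently, not factoring through a proper subset --- and to derive essential injectivity of $\Lan_J$ and the weakly Cartesian characterization of its image from there; this reconstruction is the step your argument is missing.
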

In the following statement, a (cocomplete) \emph{2-rig} consists of a monoidally cocomplete category $(\clR,\otimes,I)$: each tensor functor $A\otimes\firstblank, \firstblank\otimes B$ commutes with all small colimits. More generally (we will introduce the notion in \autoref{doctrines_for_all}) a $\bfD$-cocomplete 2-rig, or $\bfD$-2-rig for short, is such that each $A\otimes\firstblank, \firstblank\otimes B$ commutes with colimits of a certain shape $\bfD$.
\begin{proposition}[\protect{\cite[§5]{diff2rig}}]\label{up_spc_3}
	$\Spc$ is the free cocomplete \emph{2-rig} (as in \cite[§5]{diff2rig}) on a singleton; as such, given a cocomplete 2-rig $\clR$ there is an equivalence of categories
	\[ \clR \cong \{\text{colimit preserving 2-rig functors } \Spc \to \clR\}.\]
	In \cite[§5]{diff2rig} we observe how to construct the free cocomplete (symmetric) 2-rig on a given category $\clA$ it suffices to take the free (symmetric) monoidal category on $\clA$ (call it $\sfP[\clA]$, concordant with \autoref{spc_n_Vspc}), and subsequently, its free cocompletion $\widehat{\sfP[\clA]}$. In [\emph{ibi}] the notion of morphism of 2-rigs is given indirectly as pseudomorphism for the particular `doctrine of $\bfD$-rigs' in study.
\end{proposition}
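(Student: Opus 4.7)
The strategy is the two-step construction outlined in the statement itself: free cocompletion of a free symmetric monoidal category. The plan is to factor the universal property through the two universal properties that $\sfP$ and $\widehat{\sfP}$ separately satisfy.

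First, I would treat the base case $\clA = 1$. Regarding the singleton category $\mathbf{1}$ as discrete, its free symmetric (strict) monoidal category is $\sfP[1] = \sfP$ via the universal property spelled out in \autoref{some_ups_P}.\ref{ups_4}: every symmetric monoidal functor $F : \sfP \to \clC$ into a symmetric monoidal category $\clC$ is uniquely determined, up to unique monoidal isomorphism, by its value $F[1] \in \clC$, since $F[n]$ is forced to be $F[1]^{\otimes n}$ together with the correct symmetry action. In particular, for any cocomplete 2-rig $\clR$, there is an equivalence between objects $A \in \clR$ and symmetric monoidal functors $\sfP \to \clR$, sending $A \mapsto \bigl([n] \mapsto A^{\otimes n}\bigr)$.

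Second, I would invoke \autoref{up_spc_1}: $\Spc = \widehat{\sfP}$ is the free cocompletion of $\sfP$, so every functor $\sfP \to \clD$ into a cocomplete category extends uniquely (up to isomorphism) to a cocontinuous functor $\Spc \to \clD$ via Yoneda extension. The non-trivial step — and the main obstacle — is to upgrade this from a plain-functor universal property to a \emph{monoidal} universal property, i.e. to show that if $F : \sfP \to \clR$ is symmetric monoidal and $\clR$ is a cocomplete 2-rig, then its Yoneda extension $\bar F : \Spc \to \clR$ is canonically a symmetric monoidal cocontinuous functor, and that such $\bar F$'s are classified up to isomorphism by their restriction to $\sfP$. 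This is where Day convolution enters (cf.~\autoref{various_mono}.\ref{ms_2}): the monoidal structure on $\Spc$ is Day convolution along $\sfP$'s monoidal structure, which by construction is the unique cocontinuous-in-each-variable extension of $\otimes_\sfP$ under the Yoneda embedding. The 2-rig axioms on $\clR$ (separate cocontinuity of $\otimes$) are exactly what makes $\bar F$ preserve $\Day$, by the standard coend calculation: for $\fkF,\fkG \in \Spc$ and using $\fkF \cong \int^n \fkF[n] \cdot y[n]$,
\[
\bar F(\fkF \Day \fkG) \cong \int^{n,m} \fkF[n] \times \fkG[m] \cdot F([n]\oplus[m]) \cong \int^{n,m} \fkF[n] \cdot F[n] \otimes \fkG[m] \cdot F[m] \cong \bar F\fkF \otimes \bar F\fkG,
\]
where the middle isomorphism uses the symmetric monoidal structure of $F$ and bilinearity of $\otimes$ in $\clR$. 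Uniqueness is then automatic from the uniqueness clause in the free-cocompletion universal property.

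Finally, I would compose the two equivalences: given a cocomplete 2-rig $\clR$, the functor $\bigl\{\text{cocontinuous 2-rig functors }\Spc \to \clR\bigr\} \to \clR$ given by evaluation at the `generic singleton' $y[1] \in \Spc$ has, as pseudo-inverse, the composite that first sends $A \in \clR$ to the symmetric monoidal functor $[n]\mapsto A^{\otimes n} : \sfP \to \clR$ and then Yoneda-extends it to a cocontinuous 2-rig map $\Spc \to \clR$; fully-faithfulness reduces to the two universal properties chained above. The general statement about $\widehat{\sfP[\clA]}$ being the free cocomplete 2-rig on $\clA$ follows by the same argument with $\clA$ in place of $\mathbf{1}$, which is the form given in \cite[§5]{diff2rig} and only needs to be transcribed.
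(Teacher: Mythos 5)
Your proposal is correct and takes essentially the same route as the paper, which itself only sketches the argument (deferring to \cite[§5]{diff2rig}): the free cocomplete 2-rig on a point is obtained by first forming the free symmetric monoidal category $\sfP$ and then its free cocompletion $\widehat{\sfP}=\Spc$, with the Day-convolution universal property of \cite{imkelly} supplying the monoidal upgrade of the Yoneda extension exactly as in your coend computation. Nothing further is needed.
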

This last characterization requires a more fine-grained analysis of the various monoidal structures $\Spc$ can be equipped with.
\begin{remark}\label{various_mono}
	The category $\Spc$ of species carries
	\begin{enumtag}{ms}
		\item\label{ms_1} the \emph{Cartesian} (or \emph{Hadamard} \cite[8.1.2]{aguiar2010monoidal}) monoidal structure, the product of species being taken pointwise; the monoidal unit for the Hadamard product is the species that is constant at the singleton. Dually, the \emph{coCartesian} monoidal structure, the coproduct of species being taken pointwise (together with the structure above, $\Spc$ is $\times$-distributive and forms a `biCartesian closed' category in the sense of \cite{1978145}); however, its biCartesian structure is not very interesting, compared to
		\item\label{ms_2} the \emph{Day convolution} (or \emph{Cauchy} \cite[8.1.2]{aguiar2010monoidal}) monoidal structure, given by the universal property of $\Spc$ as the free monoidally cocomplete category on $\sfP$ \cite{imkelly} as the coend
		\[\label{day_def}(F \Day G)[p] := \int^{mn} F[m]\times G[n] \times \sfP(m+n,p)\]
		(Note in passing that the $\Day$-monoidal structure is symmetric and closed with an internal hom $\homDay$.) In particular, $\sfP$ is monoidally equivalent to the subcategory of $\Spc$ spanned by representables, and thus the $\Day$-monoidal unit is $y\Zero$.
		\item\label{ms_3} the \emph{substitution} (or \emph{plethystic}, cf.~\cite{mendez1993colored,Nava_1985}) monoidal structure, defined for $F,G : \sfP\to\Set$ as
		\[(F\circ G)[p] = \int^n Fk\times G^{\Day k}[p],\]
		where $G^{\Day k}:=G\Day G\Day\dots\Day G$ is the Day convolution iterated $k$ times. The $\circ$-monoidal unit is the representable $y\Uno$.
		Note in passing that the $\circ$-monoidal structure is \emph{not} symmetric, and only \emph{right} closed, i.e. only $\firstblank\circ G$ has a right adjoint.
	\end{enumtag}
\end{remark}
All these monoidal structures are tightly related:
\begin{remark}
	The Hadamard and Day convolution product give $\Spc$ the structure of a \emph{duoidal category} in the sense of \cite{garner2016commutativity}: $(\Spc,\times,\Day)$ and $(\Spc,\Day,\times)$ \cite[8.13.5]{aguiar2010monoidal} are both duoidal; \emph{positive} species, i.e. those for which $F[\varnothing]=\varnothing$ form a duoidal category under substitution and Hadamard product, [\emph{ibi}, B.6.1]. All these results extend to $\clV$-species using its monoidally cocomplete structure. The plethystic structure makes $\Spc$ monoidally equivalent to the category of analytic functors under composition \cite{adamek2008analytic,Joyal1986foncteurs}.
\end{remark}
\begin{remark}\label{important_comb_ids}
	As an additional demonstration of how tightly the Hadamard, Cauchy and plethystic structures are related, we record how these identification between combinatorial species hold \cite{bergeron1998combinatorial}: simple species tensored togeter can build quite sophisticated objects, as
	\begin{enumtag}{ci}
		\item the species of subsets $\wp : A\mapsto 2^A$ is isomorphic to $\Exp\Day \Exp$;
		\item the species $\fkS$ of permutations of \autoref{exam_species}.\ref{es_3} is isomorphic to the substitution $\Exp\circ\Cyc$;
		\item more generally, for every species $\fkF$ the species of structure the substitution $\Exp\circ \fkF$, applied to a finite set $A$, consist of an $r$-partition $(U_1,\dots,U_r)$ of $A$, and a species $s_i$ of $\fkF$-structure on each $U_i$.
	\end{enumtag}
\end{remark}
We record in the form of a definition/proposition some useful adjunctions involving the category of species.
\begin{definition}[Levels of the topos of species]
	For every $n\ge 0$, we denote $\iota_n : \sfP_{\le n} \hookrightarrow \sfP$ the inclusion of the full subcategory of $\sfP$ on the objects $\{\Zero,\Uno,\dots, [n]\}$.

	Precomposition with $\iota_n$ determines a \emph{truncation} functor
	\[\iota^*_n := \tau_n : \Spc = [\sfP, \Set]\to [\sfP_{\le n}, \Set] \]
	and left and right Kan extensions along $\iota_n$ put $\tau_n$ in the middle of a triple of adjoint functors $l_n \dashv \tau_n \dashv r_n$.

	Given $\fkH \in [\sfP_{\le n}, \Set]$, the species $l_n \fkH$ (resp., $r_n \fkH$) can be characterized as the left (resp., right) Kan extension of $\fkH$ along $\iota_{\le n}$; one easily sees that these functors can be described explicitly as
	\[
		l_n\fkH : m \mapsto \begin{cases} \fkH [m] & m \le n \\ \varnothing & m > n \end{cases} \qquad \qquad r_n\fkF : m \mapsto \begin{cases} \fkF[m] & m \le n \\ * & m > n \end{cases}
	\]
	We will say that a species $F$ has a \emph{contact of order $n$} with a species $G$ if $\tau_n F = \tau_n G$. We denote this relation as $F \mathrel{\sim_n} G$.

	It is clear that $F$ has contact of order $n$ with $G$ if and only if the associated series in the sense of \autoref{up_spc_2} define the same element in the quotient $\bbQ\llbracket X\rrbracket/(X^{n+1})$.
\end{definition}
\begin{definition}[Convergence]
	A \emph{sequence} of species is an ordered family of species $(F_0,F_1,\dots)$. The sequence $(F_0,F_1,\dots)$ is said to \emph{converge} to the species $F_\infty$ if the following `Cauchy' condition is satisfied:
	\begin{quote}
		For every $N\ge 0$ there exists an index $\bar n$ such that for every $n\ge \bar n$, $F_n \mathrel{\sim_N} F_\infty$.
	\end{quote}

	In simple terms, $(F_0,F_1,\dots)$ converges to $F_\infty$ if for every $N\ge 0$, all but a finite initial segment of terms of the sequence have contact of order $N$ with $F_\infty$. If this is the case, we write $F_n \overset{n\to\infty}\rightharpoondown F_\infty$.
\end{definition}
\begin{definition}[Species as structured graded sets]
	Every symmetric group $S_n$ admits a terminal morphism into the trivial group; precomposition along this terminal map defines a functor $j_n : \Set^{S_n} \to \Set$
	yielding the carrier of a (left) $S_n$-left $X$; left and right Kan extension along $j_n$ then define a triple of adjoints
	\[\vxy[@C=2cm]{U_n\dashv j_n \dashv V_n : \Set^{S_n} \ar[r]|-{j_n} & \Set\pair[.75em]{V_n}{U_n}[l]}\]
	and the 2-functoriality of the product $\prod_{n\ge 0}$, together with the chain of isomorphisms in \Cref{repre_repre}, yield that there is a triple of adjoints
	\[\label{innocent_monadic}\vxy{U\dashv j\dashv V : \Spc \ar[r]|-{j} & \pair[.75em]{V}{U}[l] \Set^\bbN.}\]
	(Note, in passing, that $j$ is monadic: this yields a characterization of $\Spc$ as the category of $jU$-algebras, such an algebra being specified exactly equipping an object $(X_n)\in\Set^\bbN$ with the structure of a symmetric sequence.)
\end{definition}
\subsection{Species as a differential 2-rig}
An important feature of $\Spc$ that we will analyze in this paper is that it is a \emph{differential 2-rig}: the notion was introduced by the author in \cite{diff2rig} as a unifying language capturing instances of monoidal categories $(\clR,\otimes,I)$ where each endofunctor $A\otimes\firstblank,\firstblank\otimes B$ of tensoring by a fixed object is cocontinuous and there is an endofunctor $\de : \clR \to\clR$ that is `linear and Leibniz' in the sense specified by \cite[§4]{diff2rig} and \autoref{def_gen_diff2rig} right below.

Observe that, as it is true in all presheaf categories equipped with Day convolution, the tensor product $\Day$ of \autoref{various_mono}.\ref{ms_2} preserves colimits separately in each variable (i.e., each $A\Day\firstblank$ and $\firstblank\Day B$ is cocontinuous); moreover,
\begin{remark}[The differential structure of $\Spc$]\label{diff_of_spc}
	The category $\Spc$ of species is equipped with a `derivative' endofunctor $\de : \Spc\to\Spc$ (cf.~\cite[§1.4 and \emph{passim}]{bergeron1998combinatorial}) defined as $\de F : [n]\mapsto F[[n]\oplus \Uno]$, or in the non-skeletal model $\Bij$ sending a finite set $A$ to the set $\fkF[A^+]$, where $A^+ := A\sqcup 1$ is the set $A$ to which a distinguished point has been adjoined.

	Such functor satisfies the following properties:
	\begin{enumtag}{d}
		\item \label{d_1} $\de$ is `linear', i.e. it preserves all colimits (in particular, coproducts, hence the linearity of the derivative operator: $\de(\fkF+\fkG)\cong \de\fkF + \de\fkG$);
		\item \label{d_2} $\de$ is `Leibniz', i.e. it is equipped with tensorial strengths $\tau' : \de A\otimes B \to \de(A\otimes B)$ and $\tau'' : A \otimes\de B \to \de(A\otimes B)$ such that the unique map induced by $\tau',\tau''$ from the coproduct of their domains is invertible, to the effect that $\de$ `satisfies the Leibniz rule'
		\[\de A\otimes B + A \otimes\de B\cong\de(A\otimes B).\]
	\end{enumtag}
\end{remark}
\begin{definition}\label{def_gen_diff2rig}
	Every monoidal category $(\clK,\otimes)$ equipped with an endofunctor $\de$ that satisfies the same two properties \ref{d_1},\ref{d_2} is called a \emph{differential 2-rig} (for the doctrine of all colimits) in \cite{diff2rig}.
\end{definition}
\begin{remark}\label{doctrines_for_all}
	With reference to the use of the word `doctrine' in the previous discussion, see \cite[§2]{diff2rig}, where we adapt a concept of \cite{adamek2002classification}, in which a doctrine $\bfD$ is just a subcategory of $\Cat$ (or by extension, the KZ-monad $T_\bfD$ \cite{Zoberlein1976} of free cocompletion under colimits of all shapes $D\in\bfD$); in particular in \cite[2.2---2.5]{diff2rig} we employ the following terminology:
	\begin{itemize}
		\item An \emph{additive doctrine} is a 2-category $\bfD\subseteq\bfCat$ (the 2-category of categories, strict functors, natural transformations) whose objects are categories that admit all colimits of diagrams belonging to a prescribed class, including at least finite discrete diagrams --whose colimits are finite coproducts, denoted with the infix +, and $\varnothing$ for the empty coproduct.
		\item A \emph{multiplicative doctrine} is a 2-category that is monadic (in the 2-categorical sense, \cite[p. 36]{blackwell_thesis}) over the 2-category $\mathbf{MCat}_s$ of monoidal categories, strong monoidal functors, and monoidal natural transformations.
	\end{itemize}
	Intuitively, a multiplicative doctrine consists of a 2-category of monoidal categories, possibly equipped with additional structure, that arises as the category of (pseudo)algebras for a 2-monad $M$ on $\bfCat$. So, a multiplicative doctrine is given by a 2-monad $M$ on $\Cat$, modelled over the 2-monad whose algebras are monoidal categories, of which we consider the 2-category of (pseudo)algebras.

	If by `category admitting $\bfD$-colimits' we understand a $T_\bfD$-pseudoalgebra, a \emph{$\bfD$-2-rig} consists of a monoidal category $(\clK,\otimes)$ admitting $\bfD$-colimits, where all $A\otimes\firstblank$ and $\firstblank\otimes B$ are (strong) $T_\bfD$-pseudoalgebra morphisms. This defines a 2-category $\cate{2Rig}_{\bfD}$ of $\bfD$-2-rigs, their morphisms $F : \clR\to\clS$, and 2-cells (monoidal natural transformations), that however we will rarely invoke in the discussion, content to study 1-dimensional properties. Similary, there is a 2-category $\de\cate{2Rig}_{\bfD}$ of differential 2-rigs, morphisms of the underlying 2-rigs equipped with an isomorphism $F\de\cong \de F$, and monoidal natural transformations.

	Given an additive doctrine $T_\bfD$ and a multiplicative doctrine $M$ as above, then, the \emph{doctrine of $\bfD$-2-rigs} is the (category of pseudoalgebras for the) 2-monad $T = T_\bfD\circ M$ that one obtains from a distributive law $\lambda : M\circ T_\bfD\To T_\bfD\circ M$.

	We will particularly be interested in two special cases of this construction, at extremal levels of generality: on one hand, the minimal assumption for a 2-rig is to be a monoidal category $\clR$, with finite coproducts preserved by ech $A\otimes-$ (these are the \emph{2-rigs for the doctrine of coproducts}); on the other hand, the strongest requirement is that $\clR$ is cocomplete and each $A\otimes -$ is cocontinuous (these are the \emph{2-rigs for the doctrine of all colimits}).
\end{remark}
\begin{remark}\label{on_species_differential_calc}
	In the case of species, the proof that $\de(F\Day G)\cong \de F\Day G + F \Day \de G$ appears in Joyal's original papers introducing combinatorial species. Moreover, it was known to Joyal that $\de$ satisfies the `chain rule', which means that there is a canonical isomorphism
	\[\de(F\circ G)\cong (\de F\circ G)\Day \de G;\]
	cf.~\cite[Theorem 5.18]{diff2rig} for a conceptual proof of this latter result (extendable to other species-like categories). This doesn't happen by accident: to a very large extent, the combinatorial differential calculus of species agrees with the differential calculus that can be done in the ring $\bbQ\llbracket X\rrbracket$ of formal power series (with rational coefficients), under the formal derivative operation $\frac d{dX}(a_0 + a_1X + \frac{a_2}2 X^2+\dots) := a_1 + a_2 X + \frac{a_3}2 X^2+\dots$. In particular, observe that $g_{\de F}(X)$ is the formal derivative $\frac d{dX}g_F(X)$ of the series in \autoref{up_spc_2}.
\end{remark}
\begin{remark}\label{flesh_deriv}
	Part of the fairly rich structure enjoyed by the differential 2-rig $(\Spc,\otimes,\de)$ can be explained with the fact that $\de$ preserves all \emph{limits} as well: $\de$ is precomposition with the functor $\firstblank\oplus\Uno$ defined starting from the monoidal structure in \autoref{some_ups_P}.\ref{ups_4}; but then, call $\Delta=\firstblank\oplus\Uno$, the left (resp., right) adjoint to $\de$ is the left (resp., right) Kan extension along $\Delta$, which exists since $\Spc$ is a presheaf category.\footnote{\label{futtanota}An alternative proof of the same fact is in terms of the Day convolution structure: one sees that there is a natural isomorphism $\de F \cong\homDay[{y\Uno}][F]$ where $\homDay$ is the internal hom, and $y\Uno = \sfP(1,-)$ the corepresentable functor on $\Uno$; now, certainly $\homDay[{y\Uno}]$ must have $y\Uno\Day -$ as left adjoint, but since in every presheaf category representables are tiny objects, $\de$ must also be cocontinuous, hence a left adjoint by the special adjoint functor theorem.}
\end{remark}
We just proved the following result:
\begin{theorem}\label{triple_der_adj}
	The derivative functor $\de : \Spc \to\Spc$ fits in a triple of adjoints $L\dashv \de\dashv R$, and $L,R$ are obtained as Kan extensions ($L$ on the left, $R$ on the right) along the functor $\Delta=\firstblank\oplus\Uno$.
\end{theorem}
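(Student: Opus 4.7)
The plan is to present the derivative functor as a precomposition functor on a presheaf category, at which point the existence of both adjoints becomes the completely standard fact that precomposition along a functor between small categories admits left and right Kan extension as its adjoints.

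First I would introduce the functor $\Delta : \sfP \to \sfP$ sending $[n] \mapsto [n] \oplus \Uno$, where $\oplus$ is the monoidal structure of \autoref{some_ups_P}.\ref{ups_4}, and observe that by \autoref{diff_of_spc} the derivative endofunctor acts as $\de\fkF = \fkF \circ \Delta$, i.e.\@ it is precisely the precomposition functor $\Delta^* : [\sfP,\Set] \to [\sfP,\Set]$. (Strictly speaking one should note the small coherence point that the monoidal structure of $\sfP$ is commutative, so $\oplus \Uno$ and $\Uno \oplus$ agree up to canonical isomorphism; this is a remark in passing, not a step of the proof.)

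Next I would invoke the fact that since $\sfP$ is a small category and $\Set$ is cocomplete and complete, the precomposition functor $\Delta^*$ is flanked on both sides by Kan extensions along $\Delta$: setting $L := \Lan_\Delta$ and $R := \Ran_\Delta$, the universal properties of left and right Kan extensions give exactly the adjunctions $L \dashv \Delta^* \dashv R$, hence $L \dashv \de \dashv R$. This is the standard adjoint triple associated to any precomposition functor between presheaf categories on small categories, see e.g.\@ \cite[Ch.\ X]{working-categories}; no special property of $\Delta$ is required beyond the fact that it is a functor between small categories.

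The only point that deserves comment, rather than an obstacle, is that one can double\hyp{}check the same conclusion via the Day convolution presentation, as sketched in footnote \ref{futtanota}: the isomorphism $\de\fkF \cong \homDay[y\Uno, \fkF]$ exhibits $\de$ explicitly as a right adjoint with left adjoint $y\Uno \Day (-)$, and since $y\Uno$ is a tiny object of the presheaf topos $\Spc$ (representables always are), its internal\hyp{}hom $\de = \homDay[y\Uno, -]$ preserves all colimits as well, so the special adjoint functor theorem produces a further right adjoint $R$. This gives an independent verification of the existence and position of $L$ and $R$ in the adjoint triple, and explains conceptually why $\de$ on $\Spc$ is so well\hyp{}behaved.
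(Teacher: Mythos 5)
Your proof is correct and is essentially identical to the paper's own argument (given in \autoref{flesh_deriv} immediately before the theorem): identify $\de$ with precomposition $\Delta^*$ along $\Delta=\firstblank\oplus\Uno$ and invoke the standard adjoint triple $\Lan_\Delta\dashv\Delta^*\dashv\Ran_\Delta$ available for any functor between small categories valued in a complete and cocomplete category. Even your closing remark via Day convolution and tininess of $y\Uno$ reproduces the paper's own alternative argument from footnote~\ref{futtanota}.
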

This fact was first observed in \cite{rajan1993derivatives}, where the explicit descriptions
\[\label{descr_di_LR} \textstyle
	L\fkF : A\mapsto \sum_{a\in A} \fkF[A\setminus\{a\}]\qquad
	R\fkF : A\mapsto \prod_{a\in A} \fkF[A\setminus\{a\}]
\]
for how $L,R$ act are given in terms of $\fkF$ as a functor $\Bij\to\Set$, and some useful combinatorial identities expressing $L\de, R\de$, $\de L, \de R$ in simpler terms are also analyzed (we recall them in \autoref{lumaca}).
\begin{notation}[Scopic 2-rig]\label{scopic_2_rig}
	We call \emph{scopic 2-rig} a differential 2-rig $(\clR,\otimes,D)$ whose derivative functor $D$ has both a left and a right adjoint, denoted respectively $L$ and $R$.
\end{notation}
\subsection{Left- and right-scopic 2-rigs}\label{smut_n_scopic}
Scopic differential 2-rigs can be understood as particularly well-behaved differential 2-rigs, since a fairly rich set of consequences can be derived from the existence of the string of adjoints $L\dashv D\dashv R$; a number of results can already be deduced for purely formal reasons from the fact that the derivative functor admits an adjoint on only one side. At a purely informal level, the situation should be thought in analogy with the wealth of global properties that follow from assuming that the terminal geometric morphism $\Gamma : \clE\to\Set : s$ of a sheaf topos admits more and more adjoints, and more and more well-behaved, in Lawvere's `axiomatic cohesion' framework, \cite{LAWVERE1994,lawvere07cohesion,538ea74b06cef7ce61f2a4655af64d1abd38dcde,28b28d818c32bd49e0fe548630beb90b8112e179}.

Derivation functors for the doctrine of cocomplete 2-rigs are required to preserve colimits, so that an easy, and already interesting intermediate notion of `smoothness' for a 2-rig is found when $\de : \clR\to\clR$ satisfies the solution set condition; a sufficient condition so that this happens, often realised in practice (and realized in the category of species), is that 
$\clR$ is a presentable category.
\begin{definition}[Left and right scopic differential 2-rig]\label{def_smooth_2rig}
	We call a differential 2-rig $(\clR,\otimes,\de)$ \emph{left} (resp. \emph{right}) \emph{scopic} if the derivative functor $\de : \clR\to\clR$ has a left (resp., right) adjoint $L$ (resp., $R$).
\end{definition}
In \autoref{prop_1} and \ref{scopic_r_fancy} below we understand that a \emph{closed} (differential) 2-rig is a (differential) 2-rig whose underlying monoidal category $(\clR,\otimes)$ is closed; in this case, denote the internal hom as $[-,-]$. Then,
\begin{proposition}\label{prop_1}
	Let $(\clR,\de)$ be a closed, right scopic differential 2-rig; then, the right adjoint $R$ to the derivative functor satisfies a dual condition to the Leibniz property, in the form of a canonical isomorphism
	\[\label{anti_derivative}\vxy{\varsigma_{AB} : [A,RB] \ar[r]^-\cong & [\de A,B]\times R[A,B]}\]
	natural in both objects $A,B\in\clR$.
\end{proposition}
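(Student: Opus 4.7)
My plan is to prove the isomorphism via the (enriched) Yoneda lemma, using the chain of adjunctions $(-) \otimes A \dashv [A,-]$ (closedness), $\de \dashv R$ (right scopicity), together with the Leibniz rule for $\de$ and the fact that $\clR(-, B)$ sends coproducts in the first variable to products.

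Concretely, I would compute the representable functor $\clR(X, [A, RB])$ for an arbitrary test object $X$. Applying closedness turns this into $\clR(X \otimes A, RB)$, which by the adjunction $\de \dashv R$ becomes $\clR(\de(X \otimes A), B)$. Now I invoke the Leibniz isomorphism from \autoref{diff_of_spc}\ref{d_2}, which identifies $\de(X \otimes A)$ with $\de X \otimes A + X \otimes \de A$; since $\clR(-, B)$ turns this coproduct into a product, I get
\[
\clR(\de X \otimes A, B) \times \clR(X \otimes \de A, B).
\]
Reapplying closedness to both factors yields $\clR(\de X, [A, B]) \times \clR(X, [\de A, B])$, and then the adjunction $\de \dashv R$ on the first factor gives $\clR(X, R[A,B]) \times \clR(X, [\de A, B]) \cong \clR(X, R[A,B] \times [\de A, B])$. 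Since this chain of natural isomorphisms is natural in $X$, the Yoneda lemma supplies the desired isomorphism $\varsigma_{AB}$, and naturality in $A, B$ follows from the naturality of every isomorphism used along the way.

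There is no real obstacle here: the argument is a formal shuffling of adjunctions, and every ingredient (closedness, the $\de \dashv R$ adjunction, the Leibniz isomorphism, the hom/coproduct-into-product identity) is either part of the hypotheses or an axiom of a differential 2-rig. The only point worth double-checking is the \emph{order} of the two factors in the target product: the proof above produces $R[A,B] \times [\de A, B]$, while the statement writes $[\de A, B] \times R[A,B]$; this is of course harmless up to the symmetry of the Cartesian product, but I would flag it explicitly so that subsequent uses of $\varsigma_{AB}$ (for example, in dualising combinatorial identities from \autoref{on_species_differential_calc}) are unambiguous.
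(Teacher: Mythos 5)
Your proposal is correct and coincides with the paper's own argument: both proofs compute the representable $\clR(X,[A,RB])\cong\clR(\de(X\otimes A),B)$, expand via the Leibniz isomorphism and the coproduct-to-product identity, and conclude by Yoneda. Your remark about the order of the two factors is a fair observation — the paper's own computation likewise lands on $R[A,B]\times[\de A,B]$ while the statement displays the factors in the opposite order — but, as you note, this is harmless up to the symmetry of the product.
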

\begin{proof}
	The proof consists of an application of Yoneda, in the same fashion of \cite[2.2.2]{GRAY1980127}: for a generic object $X$ we compute
	\[\clR(\de(X\otimes A),B)\cong\clR(X\otimes A,RB)\cong\clR(X, [A,RB])\]
	On the other hand,
	\begin{align*}
		\clR(\de(X\otimes A), B) & \cong \clR(\de X\otimes A + X \otimes \de A, B)             \\
		                         & \cong \clR(\de X\otimes A, B) \times \clR(X\otimes\de A, B) \\
		                         & \cong \clR(X, R[A,B])\times \clR(X, [\de A,B])              \\
		                         & \cong \clR(X, R[A,B]\times [\de A,B])
	\end{align*}
	and now given that the object $X$ is generic, by Yoneda lemma, we conclude that there must be an isomorphism like \Cref{anti_derivative}.
\end{proof}
Left scopic 2-rigs are an interesting object to study in relation to the possibility of defining a well-behaved category of differential operators, in analogy with a classical result of differential algebra: given a differential ring $(R,d)$, the set of derivations $d : R\to R$ has the structure of an $R$-module, as
\[rd(x\cdot y) = r(dx\cdot y+x\cdot dy) = rdx\cdot y + x\cdot rdy.\]
We can easily find an analogue of such result and prove that the derivations of a differential 2-rig $(\clR,\de)$ form an $\clR$-module.
\begin{definition}\label{additiv_funs}
	The 2-rig $\Cat_+(\clR,\clR)$ is defined as the category of endofunctors $L$ of $\clR$ that commute with finite sums. This is a (strict) 2-rig, since the preservation of sums makes composition of elements bilinear (and not only left linear).
\end{definition}
\begin{definition}\label{subrig_of_stron}
	The sub-2-rig $\Cat_{+,\tau}(\clR,\clR)$ is defined as the full subcategory of $\Cat_+(\clR,\clR)$ spanned by the objects $L : \clR\to\clR$ that (commute with sums and) are equipped with an invertible natural transformation
	\[\label{eq_stro} \vxy{LA\otimes B \ar[r]^-\cong & L(A\otimes B)}\]
	natural in $A,B\in\clR_0$.
\end{definition}
\begin{remark}\label{tensor_hom_der}
	Observe that $\Cat_{+,\tau}(\clR,\clR)$ is a sub-2-rig of $\Cat_+(\clR,\clR)$, equivalent to $\clR$ as \Cref{eq_stro} entails that $LX\cong L(I\otimes X)\cong LI\otimes X$, so that $L$ is isomorphic to the tensor-by-$LI$ functor $LI\otimes\firstblank$. We call a left scopic differential 2-rig $(\clR,\de)$ where $L=LI\otimes\firstblank$ (and thus $\de = [LI,\firstblank]$) `equipped with a tensor-hom derivative'.
\end{remark}
\begin{definition}\label{ders_on_R}
	Let $\clR$ be a differential 2-rig; denote $\Der[\clR]$ the category of differential operators on $\clR$, \ie the category of endofunctors $D : \clR\to\clR$ that are linear and Leibniz in the sense of \autoref{def_gen_diff2rig}; a morphism of derivations is then just a natural transformation of strong endofunctors.
\end{definition}
Note that $\Der[\clR]$ is a full subcategory of the 2-rig $\Cat_+(\clR,\clR)$, and that
\begin{proposition}\label{itsa_module}
	$\Der[\clR]$ is a left $\clR$-module in the sense of \cite{janelidze2001note}; indeed, let $\de\in\Der[\clR]$ and $L\in \clR$, one can easily see that
	\begin{align*}
		L\de(A\otimes B) & \cong L(\de A\otimes B + A\otimes \de B)    \\
		                 & \cong L(\de A\otimes B) + L(A\otimes \de B) \\
		                 & \cong L\de A\otimes B + A \otimes L\de B
	\end{align*}

\end{proposition}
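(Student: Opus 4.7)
The plan is to give the action concretely and then reduce the verification to a short calculation. For $L\in\clR$ and $\de\in\Der[\clR]$, set $L\cdot\de$ to be the endofunctor $A\mapsto L\otimes\de A$, with the evident action on morphisms; equivalently, via the equivalence $\clR\simeq\Cat_{+,\tau}(\clR,\clR)$ of \autoref{tensor_hom_der}, an object $L$ of $\clR$ becomes a strong linear endofunctor and $L\cdot\de$ is just its post-composition with $\de$. What has to be checked is that $L\cdot\de$ remains a derivation; the $\clR$-module axioms then reduce to coherence of $(\clR,\otimes)$.

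First I would verify linearity: both $\de$ and $L\otimes\firstblank$ preserve finite coproducts (the former by hypothesis, the latter by the 2-rig axiom on $\clR$), hence so does their composite. Then I would check the Leibniz rule by the chain
\[
L\otimes\de(A\otimes B)
  \;\cong\; L\otimes(\de A\otimes B + A\otimes\de B)
  \;\cong\; L\otimes\de A\otimes B + L\otimes A\otimes\de B,
\]
where the first isomorphism is Leibniz for $\de$ and the second uses that $L\otimes\firstblank$ preserves finite coproducts. The right-hand side one wants to reach is $(L\cdot\de A)\otimes B + A\otimes(L\cdot\de B)=L\otimes\de A\otimes B + A\otimes L\otimes\de B$; the first summands agree on the nose, and the second summands match after an application of the braiding $L\otimes A\cong A\otimes L$. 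The two induced strengths for $L\cdot\de$ are then $L\otimes\tau'$ and the composite of the braiding with $L\otimes\tau''$, and their coproduct is invertible by construction, whence $L\cdot\de\in\Der[\clR]$.

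Checking that this assignment is functorial in $L$ and in $\de$ (with morphisms of derivations taken to be natural transformations compatible with both strengths) is routine, and the left-module coherences --- associator $(L\otimes L')\cdot\de\cong L\cdot(L'\cdot\de)$, unitor $I\cdot\de\cong\de$, pentagon and triangle --- descend from those of the monoidal category $(\clR,\otimes)$.

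The only nontrivial ingredient is the braiding invoked in the Leibniz step: the computation as displayed in the paper tacitly requires $\clR$ to be (at least) braided monoidal. This is harmless for the motivating examples (species and the variants discussed in \autoref{other_flavs} are all symmetric monoidal), but in a non-symmetric 2-rig the action should either be restricted to central objects of $\clR$ or one should first pass to the Drinfeld centre.
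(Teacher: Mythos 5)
Your proof is correct and follows essentially the same route as the paper: under the identification $\clR\simeq\Cat_{+,\tau}(\clR,\clR)$ of \autoref{tensor_hom_der}, your action $L\cdot\de = L\otimes\de(\firstblank)$ and the paper's displayed chain of isomorphisms (Leibniz for $\de$, then linearity of $L$, then the strength) coincide step for step. Your closing caveat is well taken and worth recording: the paper's last isomorphism $L(A\otimes\de B)\cong A\otimes L\de B$ silently commutes $L$ (i.e.\ $LI$) past $A$, which requires a braiding or the centrality of $LI$ --- harmless for $(\Spc,\Day)$ and the other symmetric examples of \autoref{other_flavs}, but a genuine restriction for a bare, non-braided differential 2-rig as in \autoref{def_gen_diff2rig}.
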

Thus, one can define differential operators out of the derivative symbol $\de$ and tensoring by objects of $\Cat_{+,\tau}(\clR,\clR)\cong \clR$, by closing under arbitrary sums.
\begin{definition}[Arbogast algebra of $\clR$]\label{arbogast_algebra}
	The \emph{Arbogast algebra}\footnote{The French mathematician Louis François Antoine Arbogast ($\ast$1759--$\dag$1803) first introduced in \cite{arbogast1800} the notation `$Df$' to denote the action of a differential operator $D$ on a function $f$, thus paving the way to the notion of a differential operator as a higher-order function $D$ between functional spaces.} of $\clR$, denoted $\Arb[\clR,\de]$, consists of the 2-rig \emph{generated} by the element $\de$ in the 2-rig $\Cat_+(\clR,\clR)$ of \autoref{additiv_funs}.
\end{definition}
\begin{remark}\label{mangusta}
	It is in general quite difficult to determine the structure of $\Der[\clR]$. In the category of species, one finds as a corollary of \autoref{itsa_module} above that the composite functor $L\de$ is itself a derivation (for the doctrine of cocomplete 2-rigs): for species $\fkX,\fkY$
	\begin{align*}
		L\de(\fkX\Day\fkY) & =     y\Uno\Day\de(\fkX\Day\fkY)                                \\
		                   & \cong y\Uno\Day(\de \fkX\Day \fkY) + y\Uno\Day(\fkX\Day\de\fkY) \\
		                   & \cong L\de\fkX\Day\fkY + \fkX\Day LD\de\fkY
	\end{align*}
	In other words, we have $L(\fkX\Day\fkY)\cong L\fkX\Day \fkY$ or, which is equivalent, $\homDay[L\fkX][\fkY]\cong\homDay[\fkX][\de \fkY]$.
\end{remark}
Coupled with the aforementioned \autoref{itsa_module}, this suggest a sufficient condition, in a closed left scopic 2-rig, to make $L\de$ a derivation: indeed, in a closed scopic differential 2-rig one has that the following two conditions are equivalent. 
\begin{enumtag}{ld}
	\item \label{ld_1} $L(\firstblank\otimes \firstblank)\cong L\firstblank\otimes \firstblank$;
	\item \label{ld_2} $[L\firstblank,\firstblank]\cong [\firstblank,\de \firstblank]$ (\ie, if the adjunction $L\dashv\de$ lifts to an enriched adjunction).
\end{enumtag}
\begin{proposition}\label{scopic_r_fancy}
	Let $(\clR,\de)$ be a closed left scopic 2-rig; consider the adjunction $L\dashv \de$, and assume that either of the equivalent conditions \ref{ld_1} or \ref{ld_2} above is satisfied; then the composite comonad $L\de$ is itself a derivation.
\end{proposition}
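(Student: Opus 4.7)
My plan is to verify directly the two axioms making $L\de$ a derivation in the sense of \autoref{def_gen_diff2rig}: linearity (preservation of finite coproducts) and the Leibniz property. Linearity is immediate: by hypothesis $\de$ is a derivation on $\clR$, hence preserves all colimits, and $L$, being the left adjoint to $\de$, preserves colimits as well; the composite $L\de$ therefore preserves finite coproducts (indeed, all small colimits).

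For the Leibniz property I would chain isomorphisms as follows. Applying the Leibniz decomposition of $\de$ and the cocontinuity of $L$ gives
\[
L\de(A \otimes B) \;\cong\; L(\de A \otimes B + A \otimes \de B) \;\cong\; L(\de A \otimes B) + L(A \otimes \de B).
\]
The first summand equals $L\de A \otimes B$ by a direct application of hypothesis \ref{ld_1}. For the second summand I would exploit the symmetric braiding of $\otimes$ to rewrite $L(A \otimes \de B) \cong L(\de B \otimes A)$, apply \ref{ld_1} again to obtain $L\de B \otimes A$, and swap back to $A \otimes L\de B$. Concatenating these steps yields the required Leibniz decomposition
\[
L\de(A \otimes B) \;\cong\; L\de A \otimes B + A \otimes L\de B,
\]
natural in both variables, with the two summand inclusions providing the strengths $\tau'$ and $\tau''$ of the Leibniz structure on $L\de$.

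The main obstacle I foresee is not the decomposition itself but the bookkeeping around the tensorial strengths: confirming that the maps produced are precisely $L$ applied to the strengths of $\de$ post-composed with the relevant instances of \ref{ld_1} and the braiding, and that the induced map from the coproduct of their domains into $L\de(A \otimes B)$ is invertible --- this however follows at once from the above being a chain of isomorphisms. The naturality in $A,B$ is inherited stage-by-stage from the naturality of the strengths of $\de$, of \ref{ld_1}, and of the symmetry. The argument is the formal shadow of the computation already carried out in \autoref{mangusta} for species; alternatively, one could run the proof on the internal-hom side using \ref{ld_2} by a Yoneda-style manipulation analogous to the proof of \autoref{prop_1}, but the tensor-side computation seems the most transparent route.
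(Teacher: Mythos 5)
Your proof is correct and is essentially the argument the paper intends: the statement is presented as a corollary of \autoref{itsa_module} (via \autoref{tensor_hom_der}, condition \ref{ld_1} identifies $L$ with $LI\otimes\firstblank$, so the module computation of \autoref{itsa_module} applies), and your direct verification unfolds to exactly the same chain of isomorphisms as there and in \autoref{mangusta}. The one point worth flagging --- shared with the paper's own computation, though you are more explicit about it --- is that handling the second summand $L(A\otimes\de B)\cong A\otimes L\de B$ requires a braiding on $\otimes$ (or a right-handed companion to \ref{ld_1}), since a 2-rig as defined in \autoref{def_gen_diff2rig} is not assumed symmetric.
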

\autoref{on_species_differential_calc} above and this paragraph pave the way to the possibility of studying `categorified differential algebra', a starting question being, what good properties of $(\clR,\de)$ give nice properties to its Arbogast algebra?

In \autoref{sec_diffeq} below we sketch a proposal for a notion of `differential polynomial' that might be of interest, in mild analogy with \cite{gambino_kock_2013}'s theory of polynomial functors. We postpone a comprehensive discussion of the matter, as this would led us astray from the current goals, but see \autoref{prob_2} for some intuition on what we might focus on in the future.
\subsubsection{Algebraic structures and co/algebras in $\Spc$.}\label{some_coalgebras}
We end the section reviewing the characterization of monoids, comonoids and Hopf monoids in $\Spc$. This will be essential in \autoref{sec_abs_auto_in_spc}, since monoidal automata theory in a category with countable sums forces us to understand the structure of the subcategory of $\Day$-co/monoids at a fundamental level. (Although it is just in the setting of vector species that the notion of \emph{bialgebra} and \emph{Hopf object} becomes of particular importance, as widely exposed in \cite{aguiar2010monoidal,Aguiar2020,Aguiar2022}).

Among other examples, we will consider in \autoref{lin_algebras} categories arising as pullbacks of the forgetful functor $\Spc^\Lin\to\Spc$ from the Eilenberg--Moore category of a monad $\Lin\Day\firstblank$.

First of all, Hadamard co/monoids are simply co/monoid-valued species, i.e. functors $F : \sfP \to\Mon$ or $\sfP\to\Comon$ into the categories of monoids and comonoids in $\Set$ (more generally, a model in $\Spc$ for a certain algebraic theory $\bbT$ is just a species valued in $\bbT$-models, \ie a $(1,\Set^\bbT)$-species; with some care, this result extends to $\clV$-species, a Hadamard monoid with respect to the monoidal product in $\clV$ being just a functor $\sfP \to \Mon(\clV)$).

\emph{Cauchy co/monoids} (i.e. co/monoids for the Day convolution, whence our preference for calling them Day co/monoids) are far more interesting, as well as substitution co/monoids (the latter are called \emph{co/operads} and have an extremely long history, excellent surveys geared towards the different areas of Mathematics using them are \cite{curiennone,gambo-joy,Kelly2005a,markl2007operads}).
The first remark on $\Day$-co/monoids is simply that there aren't any among representables.
\begin{remark}
	There are no nontrivial representable $\Day$-magmas, for the simple reason that the subcategory spanned by representables is monoidally equivalent to $(\sfP,\oplus)$, and in the latter a binary operation $[n]\oplus[n]=[2n]\to[n]$ can exist only if $2n=n$. For a similar reason, there are no nontrivial `$k$-coary cooperations' $[n] \to [n]^{\oplus k}$.
\end{remark}
\begin{remark}\label{how_a_monoid_in_spc}
	It is worth to explicitly spell out what a $\Day$-monoid $(M,\mu,\eta)$ in $\Spc$ must be made of:
	\begin{itemize}
		\item the unit consists of a species morphism $\eta : y\Zero \to M$ which by Yoneda is just an element $e\in M\Zero$.
		\item the multiplication splits into a cowedge $\mu_{pq} : M[p]\times M[q] \to M[n]$ for each pair of integers $p,q$ such that $p+q=n$, natural for the action of symmetric groups, under the shuffling maps $S_p\times S_q\to S_{p+q}$ sending a pair of permutations $(\sigma,\tau)$ to the one acting as $\sigma$ on $\{1,\dots,p\}$ and as $\tau$ on $\{p+1,\dots,p+q\}$.
	\end{itemize}
\end{remark}
\begin{remark}
	Let $(M,\mu,\eta)$ be a $\Day$-monoid in $\Spc$; then the slice category $\Spc/M$ is monoidal closed, under a monoidal product $\Day^M$ which makes the forgetful functor $U : \Spc/M\to\Spc$ strong monoidal.

	In fact, there is an indexed monoidal category (cf.~\cite{gouzou1976fibrations,Shulman2008a}) $\Mon(\Spc,\Day)\to\Cat$ sending $M\mapsto\Spc/M$.
\end{remark}
The following is implied joining \cite[Example 2.3]{adamek2008analytic} and adapting \cite[8.16]{aguiar2010monoidal}: in particular, the species $\Lin$ of \autoref{exam_species} has a convenient universal property.
\begin{proposition}\label{bimo_lin}
	(\cite[p. 7]{bergeron1998combinatorial}, \cite[§8.1]{aguiar2010monoidal}) The species $\Lin$ of total orders is the free monoid on $y\Uno$. The species $\Lin_+$ of \emph{nonempty} linear orders is the free semigroup on $y\Uno$. Thus,
	\[\textstyle\Lin\cong\sum_{n\ge 0} y[n]\qquad\qquad \Lin_+\cong\sum_{n\ge 1} y[n].\]
\end{proposition}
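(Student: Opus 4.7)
The plan is to split the claim into two conceptually distinct steps: first, identify the free Day-monoid on $y\Uno$ abstractly as $\sum_{n\ge 0} y[n]$; second, match this coproduct with $\Lin$ componentwise.

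For the first step I would leverage the universal property recorded in \autoref{up_spc_1}, together with the defining feature of Day convolution recalled in \autoref{various_mono}.\ref{ms_2}: the Yoneda embedding $y\colon \sfP\hookrightarrow\Spc$ is strong symmetric monoidal from $(\sfP,\oplus)$ to $(\Spc,\Day)$. It follows immediately that $y\Uno^{\Day n}\cong y([1]^{\oplus n}) = y[n]$. Since each $A\Day\firstblank$ is cocontinuous, the usual recipe for the free monoid applies whenever a tensor distributes over countable coproducts: the free $\Day$-monoid on any object $X$ exists and is given by $\sum_{n\ge 0} X^{\Day n}$, with unit the coprojection at $n=0$ and multiplication induced by the canonical identifications $X^{\Day p}\Day X^{\Day q}\cong X^{\Day(p+q)}$. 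Specializing $X = y\Uno$ yields $\sum_{n\ge 0} y[n]$; dropping the $n=0$ summand suppresses the unit and produces the free semigroup, which is the candidate for $\Lin_+$.

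For the second step, I would verify the isomorphism $\sum_{n\ge 0} y[n]\cong \Lin$ componentwise via Yoneda. The value $y[n][p]=\sfP([n],[p])$ is empty unless $n=p$, in which case it is (the underlying set of) $S_n$. Summing over $n$, the total value at $[p]$ is exactly $S_p$, matching the description of $\Lin[p]$ in \autoref{exam_species}.\ref{es_2} as the set of bijections of $[p]$ parametrising total orders. The main obstacle (the only step that is not pure formalism) is a bookkeeping one: reconciling conventions so that the $S_p$-action carried by the representable $y[p]$, namely the regular action of $\sfP([p],[p])$ on itself, coincides with the left-multiplication action on $\Lin[p]$ prescribed in \autoref{exam_species}.\ref{es_2}. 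Once this matching is tracked (pre- versus post-composition in $\sfP$, and dualising the side of the action if necessary), the proposition reduces to two rigid pieces of abstract nonsense: the monoidal universality of $(\Spc,\Day)$ over $(\sfP,\oplus)$, and the standard construction of free monoids in a monoidally cocomplete category. The semigroup case $\Lin_+$ is entirely parallel, the only difference being the omission of the $y\Zero$ summand.
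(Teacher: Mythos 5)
Your proposal is correct, and in fact it supplies an argument where the paper gives none: \autoref{bimo_lin} is stated with citations to \cite[p. 7]{bergeron1998combinatorial} and \cite[§8.1]{aguiar2010monoidal} and no proof, so there is nothing in the text to compare against line by line. Your two steps are the standard route and both are sound: Day convolution on $\Spc$ is cocontinuous in each variable, so the geometric-series formula $\sum_{n\ge 0}X^{\Day n}$ really does give the free $\Day$-monoid (no transfinite iteration needed), and the strong monoidality of the Yoneda embedding $(\sfP,\oplus)\to(\Spc,\Day)$ gives $y\Uno^{\Day n}\cong y[n]$; the componentwise identification $(\sum_n y[n])[p]\cong \sfP([p],[p])=S_p$ with the regular (post-composition, i.e. left-multiplication) action matches \autoref{exam_species}.\ref{es_2} exactly as you say. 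A worthwhile alternative, which the paper itself leans on later (in the computation of $\de L$-algebra structures on $\Lin$), is to characterize $\Lin$ as the initial algebra of $1+y\Uno\Day\firstblank$ via the combinatorial decomposition ``a linear order is empty or a least element together with a linear order on the rest''; Ad\'amek's construction then yields $\sum_{n\ge 0}y[n]$ and freeness simultaneously. One small item you flag as bookkeeping but do not actually carry out: the statement asserts that $\Lin$ \emph{with its concatenation product} is the free monoid, so after identifying underlying species you should also check that the multiplication $y[p]\Day y[q]\cong y[p+q]$ transports to ordinal sum of linear orders (cf.\ the cowedge description in \autoref{how_a_monoid_in_spc}); this is a one-line verification but it is part of the claim.
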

(A terminological note. \cite{aguiar2010monoidal} calls `positive' what we tend to dub `nonempty', considering species as monadic over graded vector spaces in a similar fashion of our \Cref{innocent_monadic}.) In fact, in a $k$-linear setting ($k$ a field) the structure of $\Lin$ is way richer: $k\langle\Lin\rangle$ (cf.~\autoref{ch_o_bases}; it's the species assigning to $[n]$ the $k$-vector space having the set $\Lin[n]$ as a basis) carries the structure of a \emph{Hopf monoid}.
Following \autoref{how_a_monoid_in_spc}, the monoid structure of $\Lin$ arises as a cowedge $\Lin[p]\times \Lin[q] \to \Lin[n]$ for every $p+q=n$, defined as $(l,l')\mapsto l\cdot l'$ where the later is the \emph{ordinal sum} or concatenation of the linear orders $l$ on $[p]$ and $l'$ on $[q]$; ordinal sum is an associative operation, equivariant under the shuffling maps of \autoref{how_a_monoid_in_spc}. The unit is the only element of $\Lin\Uno$.

The Hopf monoid structure of $k\langle\Lin\rangle$ is extensively studied and described in \cite[§8.5]{aguiar2010monoidal}.
\subsubsection{Co/algebras for endofunctors of $\Spc$}
This subsection studies algebras and coalgebras for a few interesting endofunctors $M$ defined over $\Spc$. Despite its naturality, this idea is seemingly unexplored thus far, and in particular, no one studied the category of $\de$-algebras (cf.~\autoref{de_alg_def}) outlining the fact that it is a differential 2-rig on its own (cf.~\autoref{anchelui_diffrig}).

It becomes particularly intriguing to explore the interactions between $M$ and the structures on $\Spc$ mentioned in \autoref{various_mono}, \autoref{diff_of_spc}; clearly, this is essential to study $(M,B)$-automata, defined in \autoref{mly_n_mre} as a pullback along $M$-algebras.
\begin{definition}[The category $\Spc^\Lin$]\label{lin_algebras}
	The category $\Spc^\Lin$ is, up to equivalence, described as any of the following:
	\begin{enumtag}{l}
		\item \label{spc_T_1} the category of endofunctor algebras for $y\Uno\Day\firstblank$;
		\item \label{spc_T_2} the category of endofunctor coalgebras for $\de$;
		\item \label{spc_T_3} the \textEM category of the monad $\Lin\Day\firstblank$;
		\item \label{spc_T_4} the co\textEM category of the comonad $\homDay[\Lin]$.
	\end{enumtag}
\end{definition}
These identifications follow from the freeness of $\Lin$ and the general fact that whenever $F\dashv G$ is an adjunction between endofunctors, $\Alg(F)\cong\coAlg(G)$.

Representing objects of $\Spc^\Lin$ as \textEM algebras is particularly convenient, as a $\Lin$-module is the same thing as a $\Day$-monoid homomorphism $\Lin \to \homDay[F][F]$,
which since $\Lin$ is the free monoid generated on $y\Uno$, amounts to a single element of $\homDay[F][F]\Uno$; equivalently, if one uses characterization \ref{spc_T_1} above, a structure of type $y\Uno\Day$ on $[n]$ consists of a choice of point in $[n]$, together with an $F$-structure on the complement of that point.\footnote{One can read off the fact that these descriptions are equivalent from the end defining $\homDay[F][F][n]$, cf.~\cite[Equation (2.6)]{Kelly2005a}.}
\begin{remark}
	Limits and colimits in $\Spc^\Lin$ are computed exactly as in $\Spc$, i.e. pointwise (since $\Spc$ is monadic over $\Set^\bbN = \prod_{n\ge 1}\Set$), given that $\Spc^\Lin$ is at the same time a category of algebras (for $\Lin\Day\firstblank$, hence limits are created in $\Spc$) and of coalgebras (for the right adjoint comonad $\homDay[\Lin]$, hence colimits are created in $\Spc$). We just proved that
\end{remark}
\begin{lemma}\label{term_in_spc}
	The terminal object of $\Spc^\Lin$ is the exponential species of \autoref{exam_species}, whence the isomorphism $\de \Exp\cong \Exp$ characterizing $\Exp$ as a `Napier object' of the differential 2-rig of species.\footnote{The rationale behind the terminology is that, evidently, `exponential object' already has a different, conflicting meaning.}
\end{lemma}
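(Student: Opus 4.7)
The plan is to invoke the preceding remark, which establishes that the forgetful functor $\Spc^\Lin \to \Spc$ creates limits: the terminal object of $\Spc^\Lin$ must therefore be the terminal species of $\Spc$, equipped with its (necessarily unique) compatible structure. First I would identify the terminal species: since $\Spc = [\sfP,\Set]$ is a functor category whose target admits a terminal object, limits are computed pointwise, and the terminal species is the constant functor at the singleton set, which is precisely $\Exp$ by \autoref{exam_species}.\ref{es_0} (taking $V = *$, the terminal object of $\Set$).

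Next I will verify that $\de \Exp \cong \Exp$ directly from the pointwise definition of $\de$ recalled in \autoref{diff_of_spc}: for every $n$ one has $\de \Exp[n] = \Exp[n\oplus\Uno] = *$, so both species are canonically isomorphic to the terminal species, and hence to each other. Under the identification \autoref{lin_algebras}.\ref{spc_T_2} of $\Spc^\Lin$ with $\coAlg(\de)$, the terminal coalgebra structure on $\Exp$ is exactly this canonical isomorphism; for any other $\de$-coalgebra $(X,\alpha)$, the unique underlying species map $X\to\Exp$ is automatically a coalgebra morphism, because the square that must commute lives entirely within $\Exp \cong \de\Exp$, where any two parallel maps into the terminal species agree.

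No serious obstacle is expected: the only point deserving a word of care is to observe that the four equivalent characterizations in \autoref{lin_algebras} all furnish the same terminal object. On the algebra side this is equally transparent, since any structure map $\Lin\Day\Exp \to \Exp$ (or $y\Uno\Day\Exp \to \Exp$) exists and is unique by terminality of $\Exp$ in $\Spc$, and the equivalences of \autoref{lin_algebras} transport terminal objects to terminal objects. The resulting identity $\de \Exp \cong \Exp$ then justifies the epithet `Napier object' as the categorification of the eigenequation $\tfrac{d}{dx}e^x = e^x$.
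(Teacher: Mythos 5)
Your proposal is correct and follows essentially the same route as the paper: the remark immediately preceding the lemma establishes that limits in $\Spc^\Lin$ are created by the forgetful functor to $\Spc$ (since $\Spc^\Lin$ is a category of algebras for the monad $\Lin\Day\firstblank$), so the terminal object is the terminal species $\Exp$ equipped with its unique, automatically compatible structure. Your additional verifications --- the pointwise computation showing $\de\Exp\cong\Exp$ and the observation that the coalgebra square commutes by terminality of $\de\Exp$ --- simply make explicit the details the paper leaves implicit.
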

Armed with these explicit computations, we can attempt to unveil the structure of the category $\Spc^\Lin$ in any of the equivalent forms given in \autoref{lin_algebras} as a building block of $\Mly[\Spc](\Lin,\firstblank)$.

We now collect some examples of: a species that has only a few structures of $\Lin$-algebra (=structures of $\de$-coalgebra); a species that has at least uncountably many; a species with \emph{no} such structure as a $\Set$-species, that however becomes interesting when `changing base' (cf.~\autoref{ch_o_bases}).
\begin{example}\label{ex_alg_1}
	Structures of $\de$-coalgebra on the species of subsets of \ref{es_1} correspond to $S_n$-equivariant maps $\theta : \wp\to\de \wp$ and using the Leibniz rule over the isomorphism $\wp\cong E\Day E$ of \cite[§1.3, Eq. (33)]{bergeron1998combinatorial} one gets that $\theta : \wp \to \wp+\wp$. Using elementary group theory on the components $\theta_A$ one sees that there are only four such $\theta$: embedding a subset $U\subseteq A$ in the first summand, embedding a subset $U\subseteq A$ in the second summand, embedding $U^c = A\smallsetminus U$ in the first summand, embedding $U^c = A\smallsetminus U$ in the second summand.
\end{example}
\begin{example}\label{ex_alg_2}
	\cite[Example 9, (37)]{bergeron1998combinatorial} yields $\de\Lin\cong \Lin\Day\Lin$, whence a natural choice for a coalgebra structure $s : \Lin\to\de \Lin$, given a finite set $A$, is specified on components $s_A$ in terms of a choice of decomposition $A = I\sqcup J$ and a splitting of the total order on $A$ as a total order on $I$ and a total order on $J$. This choice is made independently for every finite set $A$, so this argument shows that there is an uncountable infinity of coalgebra structures on $\Lin$.
\end{example}
\begin{example}\label{ex_alg_3}
	Let $\Cyc$ be the species of cyclic orders, \autoref{exam_species}.\ref{es_4}; then, we immediately get $\de \Cyc\cong\Lin$ from manipulating generating series. A $\de$-coalgebra structure on $\Cyc$ now would be a natural transformation $\vartheta : \Cyc\to \Lin$, and no such map can exist by cardinality reasons: since $\Cyc[n]$ identifies with the coset space $S_n/\bbZ_n$, over which $S_n$ acts transitively, an $S_n$-equivariant map $\vartheta_n : \Cyc[n] \to S_n$ must be surjective (the translation action $S_n\times \Cyc[n]\to \Cyc[n] : (\sigma,\tau)\mapsto \sigma\tau$ is also transitive). Yet, $|S_n| = n! > (n-1)! = |\Cyc[n]|$.
\end{example}
\begin{example}\label{ex_alg_4}
	Let $\fkS$ be the species of permutations of \autoref{exam_species}.\ref{es_3}; from \autoref{important_comb_ids} it follows that $\de \fkS\cong \fkS\Day \Lin$, so that $\de$-coalgebra structures (i.e. \textEM algebras for $\Lin\Day\firstblank$) correspond under adjunction to monoid homomorphisms $\Lin \to \homDay[\fkS][\fkS]$.
\end{example}

\section{Categories of automata}\label{sec_abs_auto_in_spc}
In this section we define categories of automata, both in the generalised Ad\'amek-Trnkov\'a sense, \autoref{mly_n_mre}, and in the monoidal sense, \Cref{mon_tot_mach}, with particular care in outlining the fibrational properties of the correspondence $(F,B)\mapsto\Mly(F,B)$.

The typical object of such category depends parametrically on a pair $F,B$ where $F : \clK\to\clK$ is an endofunctor of a (in many instances, symmetric monoidal) category $\clK$ and $B\in\clK$ is an object; a \emph{Mealy automaton} $\mlyob Xds$ is then a span of the following form
\[\label{obj_mly}\vxy{
		X & FX \ar[r]^-s \ar[l]_-d & B.
	}\]
These spans are the objects of a category with morphisms $f : \mlyob Xds\to \mlyob Y{d'}{s'}$ those $f : X\to Y$ that are, at the same time, morphisms of $F$-algebras and `fibered' over $B$, meaning that
\[\label{mor_mly}f\circ d  = d'\circ Ff\qquad \text{and} \qquad s = s'\circ Ff. \]
A Moore automaton is defined similarly, just instead of being a span it's a disconnected diagram
\[\label{obj_mre}\vxy{
		X & FX \ar[l]_-d & X \ar[r]^-s  & B.
	}\]
The endofunctor $F : \clK \to \clK$ has to be understood as an abstraction of a dynamical system through iteration $F,F^2,F^3,\dots : \clK\to\clK$ --this is the point of view of~\cite{adam-trnk:automata}. 

We also fix an object $B\in\clK$ (an `output' object, cf.~\cite{Ehrig,Guitart1980}).

The explicit descriptions given in \Cref{obj_mly}, \eqref{mor_mly}, \eqref{obj_mre} makes it evident that the categories in study have the following universal properties.
\begin{definition}\label{mly_n_mre}
	We define the category $\Mly(F,B)$ of \emph{Mealy automata with input $F$ and output $B$} and $\Mre(F,B)$ of \emph{Moore automata with input $F$ and output $B$} as the following strict 2-pullbacks in $\Cat$ respectively:
	\[\label{eq_mly_n_mre}\vxy{
			\Mly(F,B) \ar[r]\ar[d]
			\ar@{}[dr]|(.375){\dopb}
			& F/B \ar[d]^U& \Mre(F,B) \ar[r]\ar[d]
			\ar@{}[dr]|(.375){\dopb}
			& \clK/B \ar[d]^{U'}\\
			\Alg(F)\ar[r]_V & \clK & \Alg(F) \ar[r]_V & \clK
		}\]
	where $\Alg(F)$ is the category of \emph{endofunctor algebras} of $F$, $F/B$ the comma category of arrows $FX\to B$, and $\clK/B$ the comma category of arrows over $B$, \ie $u : X\to B$ (and $U,V,U',V'$ are the most obvious forgetful functors).
\end{definition}
\begin{remark}[Limits and colimits in categories of automata]\label{remark_on_compl_1}
	If $F$ admits a right adjoint $R$, and $\clK$ is complete and cocomplete, so are $\Mly(F,B)$ and $\Mre(F,B)$; this can be easily argued using an argument in \cite[V.6, Ex. 3]{working-categories} and the fact that $U,U'$ create colimits and connected limits, together with the fact that $F/B\cong\clK/RB$; then, one easily verify by inspection that the terminal object of $\Mly(F,B)$ is $\prod_{n\ge 1} R^nB$ and the terminal object of $\Mre(F,B)$ is $\prod_{n\ge 0}R^n B$ (note how they only differ by a shift of index).
\end{remark}
\begin{remark}[Accessibility of categories of automata]\label{remark_on_compl_2}
	Repeatedly applying the completeness theorem of the 2-category $\mathbf{Acc}$ of accessible categories \cite[Ch. 5]{makkai1989accessible} one can prove that if $\clK$ is locally presentable (say for a regular cardinal $\kappa$) and $F$ is $\kappa$-accessible (clearly an assumption subsumed by its being a left adjoint), then $\Mly(F,B),\Mre(F,B)$ are both locally presentable (but in general, for a much higher cardinal $\kappa$).
\end{remark}
\begin{remark}\label{remark_on_compl_3}
	A particular instance of \autoref{remark_on_compl_1} is when $\clK$ is monoidal and $F:\clK\to\clK$ is the tensor product $A\otimes-$ for a fixed object of $\clK$. Then, we shorten $\Mly(F,B)$ and $\Mre(F,B)$ to $\Mly(A,B)$ and $\Mre(A,B)$ and we observe that
	\begin{itemize}
		\item if $\clK$ has countable sums, $\Alg(F)=\Alg(A\otimes-)$ is the Eilenberg-Moore category of the monad $A^*\otimes-$ where $A^*:=\sum_{n=0}^\infty A^{\otimes n}$ is the free monoid on $A$;
		\item all the results stated so far specialize: if $\clK$ is monoidal closed, complete and cocomplete, then $\Mly(A,B)$ and $\Mre(A,B)$ are complete and cocomplete; if $\clK$ is locally $\kappa$-presentable, so are $\Mly(A,B)$ and $\Mre(A,B)$ (generally, for a larger cardinal $\kappa'\gg\kappa$). The terminal object in $\Mly(A,B)$ is $[A^+,B]$, $A^+$ being the free semigroup on $A$ (resp., in $\Mre(A,B)$ it's $[A^*,B]$, $A^*$ being the free monoid).
	\end{itemize}
	Unwinding \autoref{mly_n_mre} in this particular case, the typical object $\mlyob Eds$ of $\Mly(A,B)$ is a span as in the left of the following diagram, and the typical object $\mreob Eds$ of $\Mre(A,B)$ a (disconnected) diagram as in the right
	\[\label{useful_notaiza}\vxy{
			\mlyob Eds \,: E & \ar[l]_-d A\otimes E \ar[r]^-s & B & \mreob Eds\, : E & \ar[l]_-d A\otimes E, E \ar[r]^-s & B.
		}\]
	Such models of computation with the final state depending (on the left) or not depending (on the right) from the inputs $A$ has a long history, cf.~\cite{6771467,Moore1956,5222698}. Its categorical axiomatization also has a long tradition, cf.~\cite{Goguen1972a,Goguen1973,Arbib1975}.
\end{remark}
The general observations collected so far specialize to the category of \autoref{spc_n_Vspc} as follows.
\begin{remark}
	Remarks \ref{remark_on_compl_1}, \ref{remark_on_compl_2}, \ref{remark_on_compl_3} all apply to $\clK=\Spc$ considered with the Day convolution structure (and in fact to all $\clV\emdash\Spc$ when $\clV$ is complete, cocomplete and monoidal closed). In particular, for every fixed combinatorial species $B : \sfP \to\Set$ we can easily study $\Mly[\Spc](L,B)=\Mly[\Spc](y\Uno,B)$ as the category having objects the diagrams $E \xot d y\Uno\Day E \xto s B$, or more concisely as the category obtained as the pullback $\Spc^\Lin\times_\Spc (\Spc/B)$ where $\Spc^\Lin$ is as in \autoref{lin_algebras}.
\end{remark}
Note that this is equivalent to the category of coalgebras for the functor $E\mapsto \de B\times \de E$. From this coalgebraic characterization, we deduce that
\begin{proposition}\label{omega_limit}
	The terminal object of $\Mly[\Spc](L,B)$ is the `$\omega$-differential limit'\footnote{The name is borrowed from ergodic theory and it is chosen in analogy with the notion of $\omega$-limit set of a dynamical system $f : X\to X$ defined over a metric space, see e.g. \cite[Def. 1.12]{book_91686061}, where the ($\omega$-)limit set of $x$ under $f$ is defined as $$\textstyle\omega(x,f) = \bigcap_{n\in \bbN} \overline{\{f^k(x): k>n\}},$$ the topological closure of the `eventual $f$-orbits' of $x$.} of $B$ defined as
	\[\textstyle\prod_{n\ge 1}\de^n B \cong \prod_{n\ge 1}\{y\Uno^{\Day n},B\}_\mathrm{Day}\cong\left\{\sum_{n\ge 1}y[n],B\right\}_\mathrm{Day}=\{y\Uno^+,B\}_\mathrm{Day}\]
	where again $y\Uno^+$ is the free semigroup on $y\Uno$: given \autoref{bimo_lin}, $y\Uno^+\cong\Lin_+$.
\end{proposition}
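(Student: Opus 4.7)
The proof plan is to deduce the statement by specialising the general description of the terminal object of $\Mly(F,B)$ given in \autoref{remark_on_compl_1}, and then rewriting the resulting product via the Day internal hom and the freeness of $\Lin_+$.

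First, I would invoke \autoref{triple_der_adj}: the functor $L = y\Uno\Day\firstblank$ is a left adjoint with right adjoint $\de$. Since $(\Spc,\Day)$ is a locally presentable closed monoidal category (hence complete and cocomplete), \autoref{remark_on_compl_1} applies and yields that the terminal object of $\Mly[\Spc](L,B)$ is exactly $\prod_{n\ge 1}\de^n B$. This already identifies the underlying species; what remains is to reorganise it.

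Next, I would use the identification $\de F\cong \homDay[{y\Uno}][F]$ recorded in footnote \ref{futtanota}, which by adjunction iterates to
\[
\de^n B \;\cong\; \homDay[{y\Uno^{\Day n}}][B]
\]
for every $n\ge 1$. Because $\homDay[\firstblank][B]$ sends colimits in its first argument to limits in $\Spc$, the product over $n$ can be collapsed into a single internal hom:
\[
\prod_{n\ge 1}\homDay[{y\Uno^{\Day n}}][B]
\;\cong\;
\Bigl\{\sum_{n\ge 1} y\Uno^{\Day n},\,B\Bigr\}_{\mathrm{Day}}.
\]
Finally, the Yoneda embedding $y : \sfP \to \Spc$ is strong monoidal from $(\sfP,\oplus)$ to $(\Spc,\Day)$ (this is part of the defining universal property of $\Day$, cf.~\autoref{various_mono}.\ref{ms_2}), so $y\Uno^{\Day n}\cong y[n]$, and \autoref{bimo_lin} gives $\sum_{n\ge 1} y[n]\cong \Lin_+$, the free semigroup on $y\Uno$. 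Substituting back yields the desired description.

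The argument is essentially a chain of formal manipulations; I do not foresee a real obstacle, but the step requiring some care is verifying that the identification of the terminal object furnished by \autoref{remark_on_compl_1} is compatible with the internal-hom rewriting, \ie that the limit-preservation of $\homDay[\firstblank][B]$ matches the structural maps describing $\prod_{n\ge 1}\de^n B$ as a Mealy automaton (with dynamics induced by the shift $\prod_{n\ge 1}\de^n B\to \prod_{n\ge 0}\de^n B$ via $L\dashv\de$ and output induced by the first projection). This boils down to unwinding the adjunction $L\dashv\de$ coordinatewise, which is routine but worth spelling out to make the terminology `$\omega$-differential limit' transparent.
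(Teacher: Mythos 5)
Your proof is correct and follows essentially the same route as the paper: the paper identifies $\Mly[\Spc](L,B)$ with the category of coalgebras for $E\mapsto \de B\times\de E$ and reads off the terminal object $\prod_{n\ge 1}\de^n B$, which is exactly the specialisation of \autoref{remark_on_compl_1} (with $F=L$, $R=\de$) that you invoke. The remaining chain of isomorphisms --- iterating $\de\cong\homDay[{y\Uno}][\firstblank]$, using that $\homDay[\firstblank][B]$ carries the coproduct $\sum_{n\ge 1}y[n]$ to a product, and identifying that coproduct with $\Lin_+$ via \autoref{bimo_lin} --- is the same in both.
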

\subsection{Fibrational properties of the \texorpdfstring{$\Mly$}{Mly} construction}\label{fib_propes}
We can construct total categories where \emph{all} dynamics and outputs can be considered simultaneously and coherently. This is a consequence of the well-known correspondence between indexed categories $\clC\to\mathbf{Cat}$ out of a domain $\clC$ and fibrations $\clE\to \clC$ over $\clC$.

\begin{remark}\label{mly_functor_inK}
	Consider two endofunctors $F : \clK\to\clK$, $G:\clH\to\clH$. If $P :\clK\to\clH$ is a functor intertwining $F,G$, \ie equipped with a natural transformation $\pi : GP\To PF$ we can define a functor $\pi^*:\Alg(F)\to\Alg(G)$ by application of $P$ and precomposition with $\pi$, a functor $\clK/B\to\clH/PB$ in the obvious way, and in turn a unique functor
	\[\vxy{\varpi^*:\Mly(F,B) \ar[r] & \Mly[\clH](G,PB).}\]
\end{remark}
This simple observation paves the way to the fruitful consideration that the construction in \autoref{mly_n_mre} is implicitly functorial in the pair $F,B$, and, in turn, motivate the interest in the properties of the pseudofunctor $(F,B)\mapsto \Mly(F,B)$.
\begin{definition}\label{glob_Mly_cat}
	The \emph{total Mealy 2-category} $\mathbf{Mly}$ is defined as follows:
	\begin{itemize}
		\item the objects are triples $(\clK;F,B)$ where $F : \clK\to\clK$ is an endofunctor of a category $\clK$, and $B$ an object of $\clK$;
		\item the morphisms $(P,\pi,u):(\clK;F,B)\to(\clH;G,B')$ are triples where $P : \clK\to\clH$ is a functor, $\pi : GP\To PF$ is an \emph{intertwiner} natural transformation between $F$ and $G$ and $u : PB\to B'$ is a morphism;
		\item 2-cells $\gamma : (P,\pi,u)\To(Q,\theta,v)$ consist of natural transformations $\gamma : P\To Q$ compatible with the intertwiners $\pi,\theta$ in the obvious sense, and such that $v\circ\gamma_B=u$.
	\end{itemize}
	From such a domain $\mathbf{Mly}$, sending $(\clK,F,B)$ to $\Mly(F,B)$ results in a strict 2-functor $\mathbf{Mly} \to\mathbf{Cat}$ ($\mathbf{Cat}$ is the 2-category of categories, strict functors, strict natural transformations).
\end{definition}
It is, however, rarely needed to vary the domain $\clK$ of the automata in study (but cf.~\autoref{unrewarding} for an instance of when this `change of scalars' might be required). A simpler (=lower-dimensional) approach is convenient if we are content with keeping $\clK$ fixed.
\begin{definition}[The total categories of automata]\label{tot_mach}
	\autoref{mly_n_mre} entails at once that the correspondence $(F,B)\mapsto\Mly(F,B)$ is a (pseudo)functor of type $\Mly : \Cat(\clK,\clK)^\op\times \clK \to \Cat$,
	i.e. a pseudo-profunctor $\Cat(\clK,\clK)\pto\clK$ from which we can extract a two-sided fibration, \ie a span
	\[\vxy{\Cat(\clK,\clK) &\ar[l]_-p \clMly \ar[r]^-q & \clK}\]
	such that $p$ is a fibration, $q$ is an opfibration, $p$-Cartesian lifts are $q$-vertical and $q$-opCartesian lifts are $p$-vertical. The tip $\clMly$ of the span, we call the \emph{total Mealy category} constructed from $\clK$.

	Similar considerations allow to construct the total Moore category $\clMre$ from the pseudo-profunctor $(F,B)\mapsto\Mre(F,B)$, and obtain a two-sided fibration $\Cat(\clK,\clK)\leftarrow\clMre\to\clK$, the \emph{total Moore category}.
\end{definition}
\begin{remark}
	Unwinding the definition, it is easy to establish how reindexings of the total Mealy and Moore fibration act. In the particular case where $\alpha : F\To G$ is a natural transformation between left adjoints $F\dashv R$ and $G\dashv Q$ and $f : B\to B'$ a morphism, the reindexing functor $\clMly(\alpha,f) : \clMly(G,B)\to\clMly(F,B')$ preserves all colimits --and thus, assuming $\clK$ is a locally presentable category, is a left adjoint; however, it fails to preserve limits (it already fails to preserve terminal objects; such behaviour can be put in perspective, once the coalgebraic nature of $\clMly$ is unraveled: cf.~\autoref{its_always_a_foa}).\footnote{It is probably interesting to devise under which conditions the canonical map $(\alpha,f)^*\big(\prod_{n\ge 1} Q^nB\big)\to \prod_{n\ge 1}R^nB'$, is well behaved in some sense (for example, under the mild condition that there exist at least one `point' in its domain, the map is a split epi).}
\end{remark}
If $\clK$ is monoidal its tensor functor $\firstblank\otimes-:\clK\times\clK \to \clK$ now curries to the `left regular representation' $\lambda:\clK \to\Cat(\clK,\clK):A\mapsto A\otimes-$ of $\clK$ on itself, and as a consequence, we can pullback the total Mealy fibration and the total Moore fibration to obtain the left leg of the diagram
\[\label{mon_tot_mach}\vxy{
	\clMlyTens\ar[r]\ar[d]
	\ar@{}[dr]|(.375){\dopb}
	& \clMly \ar[d]\\
	\clK^\op\times\clK\ar[r]_-{\lambda^\op\times\clK} & \Cat(\clK,\clK)^\op\times\clK
	}\]
which gives rise to the \emph{monoidal Mealy} (two-sided) \emph{fibration}
\[\label{monomealy_fib}\vxy{
	\clK & \clMlyTens \ar[r]^-{q^\otimes} \ar[l]_-{p^\otimes} & \clK
	}\]
(Similar considerations define $\clMreTens$, but we refrain from doing so for some technical reasons that make $\clMlyTens$ a better-behaved object than $\clMreTens$, cf.~\cite{boccali2023semibicategory}.) In fact, the terminology is chosen to inspire the fact that we have restricted the total Mealy category to the case where $F$-actions are monoidal and hint at the following result.
\begin{proposition}\label{its_monoidal}
	The monoidal Mealy fibration is a \emph{monoidal two-sided fibration}, in the sense of \cite{Yoneda,StreetFibreYoneda1974}, and the monoidal product interfiber is given by componentwise tensor product,
	\[\big(A,B;\mlyob Eds\big)\otimes \big(A',B', \mlyob {E'}{d'}{s'}\big) = \big(A\otimes A', B\otimes B';\mlyob{E\otimes E'}{d\otimes d'}{s\otimes s'}\big)\]
\end{proposition}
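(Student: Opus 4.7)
The plan is threefold. First, construct the putative tensor bifunctor on $\clMlyTens$ and verify it lies strictly over the componentwise tensor product on $\clK^\op\times\clK$. Second, lift the coherence data of $\clK$ to make $\clMlyTens$ a monoidal category with strict monoidal projections $p^\otimes,q^\otimes$. Third, verify that this monoidal structure is compatible with the two-sided fibration structure, which is the content of being a monoidal two-sided fibration in the sense of \cite{Yoneda,StreetFibreYoneda1974}.

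For the first step, the object formula is prescribed by the statement, but to interpret $d\otimes d'$ as a map $(A\otimes A')\otimes(E\otimes E')\to E\otimes E'$ one must reshuffle factors via the canonical isomorphism $\sigma:(A\otimes A')\otimes(E\otimes E')\xto{\sim}(A\otimes E)\otimes(A'\otimes E')$ built from the symmetry and associator of $\clK$; then $d\otimes d'$ is shorthand for $(d\otimes d')\circ\sigma$, and similarly for $s\otimes s'$. On morphisms the tensor acts componentwise on $\clK$, which is manifestly a bifunctor lifting $\otimes\times\otimes$ on $\clK^\op\times\clK$. The unit is the Mealy machine $(I,I;I,\id[I],\id[I])$.

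For the second step, the associator and unitors on $\clMlyTens$ are inherited levelwise from $\clK$: at a triple of Mealy machines they are the triple of associators on inputs, outputs, and carriers, and one checks they assemble into a morphism of $\clMlyTens$ using the naturality of the associator and braiding of $\clK$. Pentagon and triangle reduce to the same axioms in $\clK$, and $p^\otimes$, $q^\otimes$ are strict monoidal by construction.

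The substantive step is the third. Recall that the $p^\otimes$-Cartesian lift over $f:A'\to A$ (keeping $B,E$ fixed) has identity carrier and precomposes both legs with $f\otimes E$, and dually the $q^\otimes$-opCartesian lift over $g:B\to B'$ has identity carrier and postcomposes the output with $g$. The monoidal two-sided fibration condition then amounts to the closure of Cartesian (resp.~opCartesian) arrows under tensor with arbitrary morphisms, together with the coherence $(f\otimes f')^*(\xi\otimes\xi')=f^*\xi\otimes f'^*\xi'$ (and its opCartesian dual). Since the carrier component of either type of lift is an identity, its tensor with any other morphism is again an identity on a tensor and hence still Cartesian/opCartesian in the tensored fiber; the coherence equality collapses to the bifunctoriality of $\otimes$ in $\clK$ together with the naturality of the braiding used to define $\sigma$.

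The main obstacle is the symmetric-monoidal bookkeeping around $\sigma$: the shuffle must be pinned down coherently once, and every subsequent identity --- well-definedness of $d\otimes d'$ as a morphism of $\clK$, the pentagon and triangle for $\clMlyTens$, and compatibility with Cartesian/opCartesian lifts --- must be reduced, via Mac Lane's coherence theorem for symmetric monoidal categories, to an identity that already holds in $\clK$. No new ingredient beyond symmetry of $(\clK,\otimes)$ is needed, and the statement therefore holds verbatim whenever $\clK$ is symmetric monoidal; in particular it applies to $\clK=\Spc$ with the Day convolution of \autoref{various_mono}.\ref{ms_2}.
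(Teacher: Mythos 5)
The paper states this proposition without proof, so your construction is supplying a verification the text only asserts; what you do is exactly the routine check the paper implicitly has in mind, and it is essentially correct. Two remarks. First, you rightly surface the one genuinely load-bearing hypothesis that the paper leaves implicit: to read $d\otimes d'$ as a morphism $(A\otimes A')\otimes(E\otimes E')\to E\otimes E'$ one must insert the middle-four interchange $\sigma$, which requires $(\clK,\otimes)$ to be at least braided; this holds for $(\Spc,\Day)$ and for the other species-like examples, but it is not part of the bare definition of a differential 2-rig, so making it explicit is a real improvement. Second, a small imprecision in your third step: the defining condition of a monoidal (two-sided) fibration is that the tensor of two Cartesian arrows is Cartesian (dually for opCartesian), \emph{not} that Cartesian arrows are closed under tensoring with arbitrary morphisms --- the latter fails already for the tensor of a Cartesian lift with a non-invertible vertical arrow. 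Fortunately the verification you actually carry out is the correct one: you tensor two lifts, each with identity carrier, observe the result again has identity carrier over $f\otimes f'$ (resp.\ $g\otimes g'$), and reduce the coherence $(f\otimes f')^*(\xi\otimes\xi')\cong f^*\xi\otimes f'^*\xi'$ to bifunctoriality of $\otimes$ and naturality of the braiding. So only the phrasing of the condition needs repair, not the argument.
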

\begin{theorem}[\protect{\cite{Katis2010}, \cite[Def. 1]{ROSEBRUGH_SABADINI_WALTERS_1998}} rephrased]\label{its_promonad}
	If $\clK$ is \emph{Cartesian} monoidal, the profunctor $\clK^\op\times\clK \to\Cat$ obtained from \Cref{monomealy_fib} carries the structure of a (pseudo)promonad, and it gives rise to a bicategory $\Mly$ whose hom-categories are precisely the $\Mly(A,B)$.
\end{theorem}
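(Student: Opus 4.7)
The overall plan is to exhibit a (pseudo)monad structure on the Cat-profunctor $\Mly\colon\clK^\op\times\clK\to\Cat$ in the tricategory of Cat-profunctors, and then invoke the general fact that such a promonad has a Kleisli object --- a bicategory whose hom-categories are the values of the profunctor, whose composition is induced by the promonad multiplication, and whose identities come from the unit. Concretely, I need to construct a unit $\eta\colon\hom_\clK\Rightarrow\Mly$ and a multiplication $\mu\colon\Mly\otimes\Mly\Rightarrow\Mly$, where composition of Cat-profunctors is given by the coend $(\Mly\otimes\Mly)(A,C)=\int^B\Mly(A,B)\times\Mly(B,C)$, and then verify associativity and the two unit laws up to coherent isomorphism.

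For the multiplication, given Mealy spans $(E\xleftarrow{d}A\times E\xrightarrow{s}B)$ and $(F\xleftarrow{d'}B\times F\xrightarrow{s'}C)$, I build their composite as the span with state $E\times F$, output $s'\circ(s\times F)$, and transition $\langle d\circ\pi_{A\times E},\,d'\circ(s\times F)\rangle\colon A\times E\times F\to E\times F$. The critical ingredient is the diagonal $\Delta_{A\times E}$ implicit in the pair $\langle-,-\rangle$: the pair $(a,e)$ must be copied, to feed both the first automaton's transition and its output, which then becomes an input to the second automaton's dynamics. This copy is available precisely because $(\clK,\times)$ is Cartesian, which is the only place the hypothesis is used. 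The unit $\eta_{A,B}\colon\clK(A,B)\to\Mly(A,B)$ sends $f\colon A\to B$ to the span with terminal state $1\xleftarrow{!}A\times 1\xrightarrow{\cong}A\xrightarrow{f}B$, a manifestly pseudonatural assignment.

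The remaining verifications --- pseudonaturality of $\eta$ and $\mu$, dinaturality in the intermediate object (so that $\mu$ descends from the product to the coend), associativity of $\mu$, and the two unit laws --- all reduce by Yoneda-style arguments to manipulations in the Cartesian monoidal coherence of $(\clK,\times,1)$: each triple composite produces the same state space $E\times F\times G$ and, after projecting away the redundant copies introduced by nested diagonals, identical transition and output components, while unit composites collapse via the unitor $A\times 1\cong A$. The main obstacle is the combinatorial bookkeeping of these coherence isomorphisms rather than anything conceptual; the cleanest route is to absorb associators, unitors, and diagonals into canonical forms using the coherence theorem for symmetric monoidal categories. Once this is done, the Kleisli object of the resulting promonad in Cat-profunctors is precisely the bicategory $\Mly$, with hom-categories $\Mly(A,B)$ by construction.
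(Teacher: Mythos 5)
Your construction is correct and is essentially the one the paper relies on: the theorem is stated without proof as a rephrasing of Katis--Sabadini--Walters and Rosebrugh--Sabadini--Walters, and your cascade composite (state $E\times F$, transition $\langle d\circ\pi_{A\times E},\, d'\circ(s\times F)\rangle$, output $s'\circ(s\times F)$, with the diagonal supplying the only use of Cartesianness) together with the trivial-state unit is exactly the promonad structure those sources exhibit, whose Kleisli object is the bicategory $\Mly$.
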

\section{The differential structure of $\texorpdfstring{\Mly[\Spc]}{Mly}$}
The scope of this section is to study the differential 2-rig $(\Spc,\Day,\de)$ in more depth, by extending the general theory of differential 2-rigs.	The terminology introduced so far gives us enough leeway to introduce the main theorem of the present section: categories of automata on a differential 2-rig $(\clK,\otimes,\de)$ form themselves a differential 2-rig, such that the functor of \Cref{monomealy_fib} is a \emph{fibration of differential 2-rigs} (=a strong monoidal functor, preserving the differential, which moreover is a fibration).
\begin{theorem}\label{its_diffe}
	Let $(\clK,\otimes,\de)$ be a differential 2-rig; then the total category of the monoidal Mealy fibration is itself a differential 2-rig for a canonical choice of a derivative functor $\bar\de : \clMlyTens \to \clMlyTens$ such that the projection functors $p^\otimes,q^\otimes$ in \Cref{monomealy_fib} are (strict) morphisms of differential 2-rigs.

	A similar statement holds replacing $\clMlyTens$ with the category $\clMreTens$.
\end{theorem}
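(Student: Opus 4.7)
The plan is to define a derivative endofunctor $\bar\de$ on $\clMlyTens$ that acts as $\de$ component-by-component on the data $(A,B;\mlyob{E}{d}{s})$, and then to reduce the axioms of a differential 2-rig on $\clMlyTens$ to the same axioms already satisfied by $\de$ on $\clK$. Concretely, set
\[
  \bar\de(A,B;\mlyob{E}{d}{s}) := (\de A,\,\de B;\,\mlyob{\de E}{\tilde d}{\tilde s}),
\]
with $\tilde d\colon \de A\otimes \de E\to \de E$ and $\tilde s\colon \de A\otimes \de E\to \de B$ the canonical morphisms induced by $\de d$ and $\de s$ via the Leibniz isomorphism $\de(A\otimes E)\cong \de A\otimes E + A\otimes \de E$ together with the universal property of the initial object of $\clK$ where no canonical map is otherwise available. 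On morphisms, $\bar\de$ is obtained by applying $\de$ in each of the three components. Then $p^\otimes\bar\de=\de p^\otimes$ and $q^\otimes\bar\de=\de q^\otimes$ hold strictly by construction.

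The verification of the differential 2-rig axioms then proceeds in two parallel steps. Linearity of $\bar\de$ (preservation of finite coproducts) reduces to linearity of $\de$ on $\clK$ together with the componentwise formation of coproducts in $\clMlyTens$: the three linearity isomorphisms $\de(A+A')\cong \de A+\de A'$, $\de(B+B')\cong \de B+\de B'$, $\de(E+E')\cong \de E+\de E'$ assemble to $\bar\de(X+Y)\cong\bar\de X+\bar\de Y$. Similarly, the Leibniz rule $\bar\de(X\otimes Y)\cong \bar\de X\otimes Y+X\otimes \bar\de Y$ in $\clMlyTens$ decomposes, via the componentwise tensor product of \autoref{its_monoidal}, into the Leibniz rule for $\de$ applied to each of $A$, $B$, $E$; these assemble to the required isomorphism in $\clMlyTens$. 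Together with the strict compatibility with $p^\otimes,q^\otimes$, this certifies both projections as strict morphisms of differential 2-rigs.

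The argument for $\clMreTens$ is essentially the same, with the minor simplification that the Moore output $s\colon E\to B$ does not involve $A$, so the derived output is directly $\de s\colon \de E\to \de B$, and only the dynamics leg carries the Leibniz-induced structure.

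\emph{Main obstacle.} The delicate point is pinning down the construction of $\tilde d,\tilde s$ on the state component in a way that is both natural in $(A,B;\mlyob{E}{d}{s})$ and coherent with the Leibniz iso: the map $\de d$ lives most naturally on $\de(A\otimes E)\cong \de A\otimes E + A\otimes \de E$, which is not of the form $\de A\otimes \de E$ required for a Mealy over $(\de A,\de B)$ with state $\de E$, so producing the desired $\tilde d\colon \de A\otimes \de E\to \de E$ requires a consistent choice of factorization (in the worst case, the zero map through the initial object). Once this choice is fixed, functoriality of $\bar\de$ and compatibility with the 2-rig operations become bookkeeping exercises driven entirely by the componentwise structure of $\clMlyTens$ and the naturality of the Leibniz isomorphism in $\clK$.
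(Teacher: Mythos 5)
There is a genuine gap, and you have in fact put your finger on it yourself in your ``main obstacle'' paragraph: your definition derives \emph{all three} components, sending $(A,B;\mlyob{E}{d}{s})$ to $(\de A,\de B;\mlyob{\de E}{\tilde d}{\tilde s})$, and then there is simply no canonical morphism $\de A\otimes\de E\to\de(A\otimes E)$ through which to push $\de d$ --- the Leibniz isomorphism points the other way and mixes the summands $\de A\otimes E$ and $A\otimes\de E$, neither of which is $\de A\otimes\de E$. Your fallback of factoring through the initial object is not an acceptable resolution: it makes the derived dynamics and output trivial, destroys naturality of the would-be Leibniz comparison, and cannot yield the invertible leibnizator that the definition of a differential 2-rig demands. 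The paper's construction avoids the obstacle entirely by \emph{not} deriving the input object: $\bar\de$ sends $\mlyob{E}{d}{s}$ over $(A,B)$ to $\mlyob{\de E}{\de d\circ i}{\de s\circ i}$ over $(A,\de B)$, where $i\colon A\otimes\de E\to\de(A\otimes E)$ is the right tensorial strength of $\de$ --- a map that \emph{does} exist canonically. Formally this is packaged as a lifting of the endofunctor $(A,B,X)\mapsto(A,\de B,\de X)$ of $\clK^\op\times\clK\times\clK$ through the coalgebra presentation of $\clMlyTens$ (\autoref{its_always_a_foa}), the lifting being induced by a distributive law built from the strength and the closed structure.

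A second, related problem is your claim that the linearity and Leibniz isomorphisms ``assemble'' componentwise. Coproducts in $\clMlyTens$ are \emph{not} componentwise in the naive sense: by \autoref{how_colimits_work} the coproduct of automata over $(A,B)$ and $(A',B')$ lives over $(A\times A',\,B+B')$ --- the input components are \emph{multiplied}, because the fibration is contravariant in the input --- so an isomorphism $\de(A+A')\cong\de A+\de A'$ is not even the relevant datum for that slot, and $\de$ has no reason to preserve the product $A\times A'$. Keeping $A$ untouched, as the paper does, sidesteps this as well; the actual verification of linearity and of the Leibniz rule then takes place in the fibres after pushing forward along the reindexings $(\pi,\iota)_*$, $(\pi',\iota')_*$, which is where the real (if routine) work of the paper's proof lies. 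Your treatment of the Moore case inherits the same two defects.
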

We conduct the proof in full detail, in the case of $\clMlyTens$; the proof for $\clMreTens$ is analogous, \emph{mutatis mutandis}.
\begin{lemma}\label{its_always_a_foa}
	The monoidal Mealy fibration $\clMlyTens$ of \Cref{mon_tot_mach} arises as the category of coalgebras for a endofunctor $R$ of $\clK^\op\times\clK\times\clK$, sliced over the projection $\pi_{12} : (\clK^\op\times\clK)\times\clK \to \clK^\op\times\clK$ on the first two factors; this means $R : (\clK^\op\times\clK\times\clK,\pi_{12}) \to (\clK^\op\times\clK\times\clK,\pi_{12})$ is a morphism in the slice $\Cat/(\clK^\op\times\clK)$, making the triangle
	\[\vxy{
		(\clK^\op\times\clK)\times\clK\ar[dr]_{\pi_{12}}\ar[rr]^-R && (\clK^\op\times\clK)\times\clK \ar[dl]^{\pi_{12}}\\
		& \clK^\op\times\clK
		}\]
	commute strictly.
\end{lemma}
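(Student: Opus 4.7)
The plan is to package the pair of morphisms $(d: A\otimes E\to E,\, s: A\otimes E\to B)$ defining a Mealy automaton into a single coalgebra-type structure map out of $E$ by currying through the tensor-hom adjunction. Under $A\otimes\firstblank \dashv [A,\firstblank]$, such a pair transposes to a single morphism $\alpha : E \to [A,E]\times[A,B]$, naturally in $A,B,E$ with the expected variances; this identifies the third component of the sought-after $R$.

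Concretely, I would define
$$
R:\clK^\op\times\clK\times\clK \to \clK^\op\times\clK\times\clK,\qquad R(A,B,E) := (A,\; B,\; [A,E]\times[A,B]),
$$
identity on the first two factors and acting on the third by pre/post-composition with the correct variances (contravariant in $A$ in both hom-slots, covariant in $B$ and $E$). The equality $\pi_{12}\circ R = \pi_{12}$ is built in, so $R$ is a morphism in $\Cat/(\clK^\op\times\clK)$. The category of $R$-coalgebras in that slice is the inserter $\textsf{Ins}(1,R)$: an object is a triple $(A,B,E)$ with a map $\alpha : E \to [A,E]\times[A,B]$, and by the adjunction this datum is exactly a Mealy automaton $\mlyob{E}{d}{s}$ over $(A,B)$.

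For morphisms, a coalgebra morphism is a triple $(f:A'\to A,\, g:B\to B',\, k:E\to E')$ (the $\clK^\op$ reversing the $A$-direction) making the naturality square of $\alpha$ and $\alpha'$ against $k$ commute; transposing that square across $A'\otimes\firstblank\dashv [A',\firstblank]$ recovers precisely the two compatibility conditions \Cref{mor_mly} defining morphisms of $\clMlyTens$ under the reindexing $(f,g)$. Functoriality on composition is then automatic from the functoriality of the currying isomorphism, giving the claimed identification $\clMlyTens \simeq \textsf{Ins}(1,R)$.

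The main obstacle is that the construction of $R$ as written requires $\clK$ to be monoidal closed with binary products. For the motivating examples (including $\Spc$ itself, and all the species-like categories of \Cref{other_flavs}) both structures are present and the argument applies verbatim. In full generality one can instead set up $R$ via the Yoneda embedding $y: \clK\hookrightarrow \widehat\clK$, taking $S(A,B,E)$ to be the presheaf $T\mapsto \clK(A\otimes T, E)\times\clK(A\otimes T, B)$; this object always exists in $\widehat\clK$, and applying the Yoneda lemma termwise shows that its inserter-in-the-slice has the same objects and morphisms as $\clMlyTens$. A last small check, which is pure 2-categorical bookkeeping, is to verify that the identification descends to the span projections $p^\otimes, q^\otimes$ of \Cref{monomealy_fib}, so that the inserter description is fibered consistently.
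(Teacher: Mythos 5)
Your proposal is essentially the paper's own argument: the paper defines $R$ on the third component by $R_{AB}X = [A, X\times B]$, which is canonically isomorphic to your $[A,E]\times[A,B]$ since $[A,\firstblank]$ preserves products, and likewise identifies $\clMlyTens$ with the coalgebras for this functor in the slice over $\clK^\op\times\clK$ by transposing across $A\otimes\firstblank\dashv[A,\firstblank]$ (citing the standard coalgebraic presentation of Mealy machines and a general result on fibrations of co/algebras). Your explicit flag that the construction needs $\clK$ monoidal closed with binary products, together with the Yoneda-embedding workaround, is a point the paper's proof silently assumes rather than addresses, but the core route is the same.
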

\begin{proof}
	The functor $R$ is defined as
	\[\vxy[@R=0cm]{
		(\clK^\op\times\clK)\times\clK \ar[rr]&& (\clK^\op\times\clK)\times\clK\\
		(A,B;X) \ar@{|->}[rr] && (A,B;R_{AB}X)
		}\]
	where $R_{AB}X = [A, X\times B]$; it is well-known that a Mealy automaton $\mlyob Xds$ in $\Mly(A,B)$ is a $R_{AB}$-coalgebra, cf.~\cite[Exercise 2.3.2]{Jacobs2016}, and then the result (\ie the fact that $\clMlyTens$ is the object of $R$-coalgebras in $\Cat/(\clK^\op\times\clK)$) follows from (the dual of) \cite[Remark 3.3]{fib_of_alg}, if the category $\clK^\op\times\clK$ is treated as a category of parameters.
\end{proof}
An immediate consequence of \autoref{its_always_a_foa} is that colimits in $\clMlyTens$ can be computed as in the base $\clK$ (coalgebra objects are inserters of the form $\Ins(\id[],R)$, whose forgetful functor $V : \Ins(\id[],R) \to \clK$ create colimits):
\begin{corollary}
	Colimits in $\clMlyTens$ are created by a canonical forgetful functor
	\[\vxy{V : \clMlyTens \ar[r] & (\clK^\op\times\clK)\times\clK}\]
	presenting its domain $\clMlyTens$ as the inserter $\Ins(\id[],R)$ \cite[(4.1)]{2catlimits}, \ie as a square
	\[\vxy{
			\clMlyTens \drtwocell<\omit>{\upsilon} \ar[r]^-V\ar[d]_V& (\clK^\op\times\clK)\times\clK \ar@{=}[d]\\
			(\clK^\op\times\clK)\times\clK \ar[r]_-R & (\clK^\op\times\clK)\times\clK
		}\]
	terminal among all such.
\end{corollary}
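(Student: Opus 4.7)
The plan is to unpack the universal property of the inserter and then invoke the standard fact that forgetful functors out of categories of coalgebras create colimits; the slice over $\clK^\op\times\clK$ will be handled simply by working fibrewise.

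First I would recall that the inserter $\Ins(\id[],R)$, in the 2\hyp{}categorical sense of \cite{2catlimits}, is by definition the universal category $\clE$ equipped with a functor $V:\clE\to(\clK^\op\times\clK)\times\clK$ and a 2\hyp{}cell $\upsilon:\id[]\circ V\To R\circ V$. Since the two functors compared here are $\id[]$ and $R$, such a 2\hyp{}cell is exactly a natural transformation $V\To RV$, i.e.\@ the structure of an $R$\hyp{}coalgebra on $V$. Therefore the universal property of $\Ins(\id[],R)$ coincides, on the nose, with the universal property of the category of $R$\hyp{}coalgebras in $\Cat/(\clK^\op\times\clK)$; combined with \autoref{its_always_a_foa} this yields $\clMlyTens\cong\Ins(\id[],R)$ and identifies $V$ with the forgetful functor sending a triple $(A,B;\mlyob Xds)$ to the underlying object $(A,B;X)$.

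Next I would prove that $V$ creates colimits. The argument is the standard dual of the observation that forgetful functors out of algebras create limits: given a diagram $D:\clJ\to\clMlyTens$, write $\alpha_j:VD(j)\to R(VD(j))$ for the coalgebra structure of $D(j)$; if the colimit $C:=\colim(VD)$ exists, postcomposing each $\alpha_j$ with the coprojection $R(VD(j))\to RC$ produces a cocone on $VD$ with vertex $RC$, whose induced map $C\to RC$ is a coalgebra structure and is forced by the universal property of the colimit. Functoriality of $\colim$ and uniqueness show that this is the colimit of $D$ in $\clMlyTens$ and is preserved and reflected by $V$. Because this is a purely formal consequence of the coalgebra structure — nothing is required of $R$ — the argument works verbatim inside the slice $\Cat/(\clK^\op\times\clK)$, so $V$ creates all colimits that exist in $(\clK^\op\times\clK)\times\clK$, which by componentwise computation exist whenever $\clK$ has them.

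The only subtle point, and the place where I would spend most of the care, is the first paragraph: making sure the 2\hyp{}cell $\upsilon$ provided by the inserter is interpreted strictly (so that its components assemble into an honest $R$\hyp{}coalgebra) and that the \emph{terminality} clause in the statement of the corollary is matched exactly against the 1\hyp{}dimensional universal property of $\Ins(\id[],R)$ used in \cite{2catlimits}. Once this bookkeeping is in place, the remainder is essentially the dual of the well-known creation-of-limits result for endofunctor algebras.
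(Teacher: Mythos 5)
Your proposal is correct and follows the same route the paper takes: it identifies $\clMlyTens$ with the inserter $\Ins(\id[],R)$ (equivalently, the category of $R$-coalgebras in $\Cat/(\clK^\op\times\clK)$, via \autoref{its_always_a_foa}) and then invokes the standard fact that forgetful functors out of coalgebra categories create colimits, with no hypothesis on $R$. The paper treats this as immediate from the preceding lemma and only spells out the fibrewise computation later (in \autoref{how_colimits_work}), so your more detailed write-up of the creation argument is a faithful expansion of the intended proof rather than a different one.
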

\begin{remark}\label{how_colimits_work}
	It is worth to unravel how colimits are indeed computed in $\clMlyTens$, and in particular make the construction of coproducts explicit, as they will be needed to prove linearity and the Leibniz property of $\bar\de$. Recall the compact notation in \Cref{useful_notaiza}: an object of $\clMlyTens$ is denoted as $\mlyob Eds$.

	Given a diagram $H : \clJ \to \clMlyTens : J\mapsto \mlyob {X_J}{d_J}{s_J}_{A_JB_J}$, let $A:=\lim_J A_J$ (with terminal cone $(\pi_J : A\to A_J\mid J\in\clJ)$) and $B := \colim_J B_J$ (with initial cocone $(\iota_J : B_J\to B\mid J\in\clJ)$): then, the diagram
	\[\label{pushato_forwardo}\clJ \to \clMlyTens : J\mapsto (\pi_J,\iota_J)_*\mlyob{X_J}{d_J}{s_J}\]
	obtained from the reindexing functor has shape $\clJ$ and it lives entirely in the fiber over $(A,B)$. The colimit of \Cref{pushato_forwardo} can then be computed in this fiber, and it is a matter of elementary diagram-chasing to show that this is a colimit for the diagram $H$ in the whole $\clMlyTens$.

	In the specific case of (binary, and by induction, finite) coproducts, this construction starts with two objects $\mlyob Xds_{AB}$ and $\mlyob Y{d'}{s'}_{A',B'}$, builds the diagram
	\[\vxy[@R=0cm]{
		A & A\times A'\ar[r]^-{\pi'}\ar[l]_-\pi & A' \\
		B \ar[r]^-\iota & B+B' & B'\ar[l]_-{\iota'}
		}\]
	which is a colimit diagram in $\clK^\op\times\clK$, and then pushes $\mlyob Xds,\mlyob Y{d'}{s'}$ forward into the fibre $\Mly(A\times A', B+B')$ using the reindexings
	\[\vxy[@C=2cm]{
		\Mly(A,B) \ar[r]_-{(\pi,\iota)_*} & \Mly(A\times A', B+B') & \ar[l]^-{(\pi',\iota')_*}\Mly(A',B').
		}\]
	Then, one computes the coproduct in the fibre $\Mly(A\times A', B+B')$, \ie the diagram
	\[\vxy{
		&(A\times A')\otimes (X+Y)\ar@{=}[d]\\
		&(A\times A')\otimes X + (A\times A')\otimes Y \ar[dr]|{\iota\circ s \circ (\pi\otimes X) +\iota'\circ s'\circ (\pi'\otimes X) }\ar[dl]|{d\circ (\pi\otimes X)+d'\circ (\pi'\otimes X)}\\
		X+Y && B+B'.
		}\]
	It is a lengthy but easy computation to see that this construction satisfies the universal property of coproducts in $\clMlyTens$.
\end{remark}
If $\clX$ is any category and $R$ an endofunctor of $\clX$, it is well-known that liftings of an endofunctor $D : \clX\to\clX$ to the category of $R$-coalgebras correspond bijectively to distributive laws $\lambda : D R\To R D$; we now want to deduce the existence of the former lifting from the existence of the latter distributive law, when the product category $\clK^\op\times\clK$ becomes a differential 2-rig under the action
\[\label{tolift_1}\vxy{D=\id\times\de\times\de : \clK^\op\times\clK\times\clK \ar[r] & \clK^\op\times\clK\times\clK}\]
sending $(A,B,X)\mapsto (A,\de B,\de X)$ (this corresponds to deriving the carrier and output objects, but not the input $A$).
\begin{construction}
	In \Cref{tolift_1} the distributive law is the natural transformation with components
	\[\vxy[@R=0cm@C=2cm]{(\id\times\de\times\de)\circ R \ar[r]& R\circ(\id\times\de\times\de)\\
		(A,\de B,\de R_{AB}X) \ar@{|->}[r]_{(\id[A],\id[\de B],\lambda)} & (A,\de B,R_{A,\de B}\de X)
		}\]
	where $\lambda : \de [A,X\times B]\to [A,\de X\times \de B]$ is obtained from the tensorial strength of $\de$
	\[\label{stren}\vxy{\de [A,X\times B] \ar[r]^-\star & [A,\de(X\times B)] \ar[rrr]^{[A,\langle\de\pi_X,\de\pi_B\rangle]} &&& [A,\de X\times \de B]}\]
	and the arrow $\star$ is obtained as composition obtained from the tensorial strength of $\de$ and the monoidal closed adjunction $-\otimes A\dashv [A,-]$,
	\[\vxy{
		\de[A,X\times B] \ar[r]^-{\eta} & [A,\de[A,X\times B]\otimes A] \ar[d]^{[A,\tau^\textsc{l}]}\\
		& [A,\de([A,X\times B]\otimes A)] \ar[r]_-{\de\epsilon} & [A,\de(X\times B)]
		}\]
	as in \cite{kock1972strong}.
\end{construction}
The arrow in \Cref{stren} now yields the lifting of $\de$ to a functor
\[\label{xavier_barde}\vxy{\bar\de : \clMlyTens \ar[r] & \clMlyTens}\]
defined sending $\mlyob Xds_{AB}$ to the span $\mlyob {\de X}{\de d\circ i,\de s\circ i}{}_{A,\de B}$,
\[\vxy{
	& A\otimes \de X \ar[dr]\ar[dl]\ar[d]_i\\
	\de x & \de(A\otimes X) \ar[r]^-{\de s} \ar[l]_-{\de d}& \de B
	}\]
(where $i$ is the right tensorial strength of $\de$) which is
\begin{itemize}
	\item linear, because given two objects $\mlyob Xds$ and $\mlyob Y{d'}{s'}$, denoting
	      \[\vxy{d\oplus d' : (A\times A')\otimes(X+Y) \ar[r]^-\cong & (A\times A')\otimes X + (A\times A')\otimes Y \ar[r]^-{d(\pi\otimes X)+d'(\pi'\otimes Y)} & X + Y}\]
	      and similarly for $s\oplus s'$, one has an isomorphism
	      \begin{align*}
		      \bar\de\left[(\pi,\iota)_*\mlyob{X}{d}{s} +_{\substack{A\times A' \\B+B'}} (\pi',\iota')_*\mlyob{Y}{d'}{s'} \right] &\cong \bar\de \mlyob{X+Y}{d\oplus d'}{s\oplus s'}\cong \mlyob{\de(X+Y)}{\de(d\oplus d')\circ i}{\de(s\oplus s')\circ i}\\ &\textstyle\cong\mlyob{\de X}{\de d\circ i}{\de s\circ i}_{A,\de B} + \mlyob{\de Y}{\de d'\circ i}{\de s'\circ i}_{A',\de B'}\\ & \cong (\pi,\iota)_*\mlyob{\de X}{\de d\circ i}{\de s\circ i}_{A,\de B} +_{\substack{A\times A'\\\de B+\de B'}} (\pi',\iota')_*\mlyob{\de Y}{\de d'\circ i}{\de s'\circ i}_{A',\de B'}
	      \end{align*}
	      (the proof is an exercise in casting the universal property, made painstaking by the definition of morphism in the fibered category $\clMlyTens$ implicitly given in \Cref{monomealy_fib}).
	\item Leibniz, because once the monoidal structure on $\clMlyTens$ is defined as in \Cref{its_monoidal},
	      one has a tensorial strength on $\bar\de$ given by
	      \[\vxy{
			      \bar\tau_{(X,d,s),(Y,d',s')}^\textsc{r} := (\id[A\otimes A'],\tau_{B,B'};\tau_{XY}) :
			      \mlyob{X\otimes\de Y}{d\otimes (\de d'\circ i)}{s\otimes(\de s'\circ i)}\ar[r] & \mlyob{\de(X\otimes Y)}{\de(d\otimes d')}{\de(s\otimes s')}
		      }\]
	      The proof that these components are indeed morphisms in $\clMlyTens$ relies on the naturality of $\tau$'s components, as well as their compatibility with themselves: for example, every part of diagram
	      \[\small\notag\vxy{
		      {\partial X\otimes Y} \ar[ddd]_{\tau_{XY}}& {\partial(A\otimes X)\otimes A'\otimes Y} \ar[ddd]_{\tau_{A\otimes X,A'\otimes Y}} \ar[l]_-{\partial d \otimes d'}& {A\otimes \partial X\otimes A'\otimes Y} \ar@{=}[r]\ar[l]_-{\tau_{AX}\otimes A'\otimes Y}\ar[d] & {A\otimes A'\otimes \partial X\otimes Y }\ar[ddd]^{A\otimes A'\otimes \tau_{XY}} \\
		      && {\partial(A\otimes X)\otimes A'\otimes Y }\ar[d] \\
		      && {\partial(A\otimes X \otimes A'\otimes Y)} \ar@{=}[d] \\
		      {\partial(X\otimes Y)} &\ar[l]_-{\de(d\otimes d')}\ar@{=}[r] {\partial(A\otimes X\otimes A'\otimes Y)} & {\partial(A\otimes A'\otimes X\otimes Y)} & {A\otimes A'\otimes \partial(X\otimes Y)}\ar[l]_-{\tau_{A\otimes A',X\otimes Y}}
		      }\]
	      commutes by the axioms of tensorial strength.

	      Using the left strength $\tau^\textsc{l}$ of $\de$ one defines the left strength $\bar\tau^\textsc{l}$ of $\bar\de$ in a similar fashion, and thus a unique map which is a candidate leibnizator (obtained from the leibnizator of $\de$, of course). The construction of coproducts in $\clMlyTens$, and the specific way in which the tensorial strength for $\bar\de$ is induced pushing forward with the tensorial strength components of $\de$ now entails that the diagram
	      \[\vxy{
			      \mlyob Xds\otimes\bar\de\mlyob Y{d'}{s'} \ar[r]& \mlyob{\de(X\otimes Y)}{\de(d\otimes d')}{\de(s\otimes s')} & \bar\de\mlyob Xds\otimes \mlyob Y{d'}{s'}\ar[l]
		      }\]
	      is a coproduct in $\clMlyTens$, thus proving the invertibility of $\bar\fkl$.
\end{itemize}
In a completely analogous fashion, one proves similar results for the general opfibration associated to $[\clK,\clK]^\op\times\clK \to \Cat$ sending $(F,B)\mapsto \Mly(F,B)$ (the result is probably too general to be of some use when $F$ is free to vary over all $[\clK,\clK]$, so it can be restated in terms of the opfibration $B\mapsto \Mly(F,B)$ alone, obtained fixing the first argument of $\Mly(-,-)$):
\begin{proposition}
	Let $\LAdj[\clK,\clK]$ be the full subcategory of left adjoint endofunctors of $\clK$. Then the category $\clMly$ of \autoref{tot_mach}, appropriately restricted over $\LAdj[\clK,\clK]$, can be seen as the object of coalgebras for a parametric functor $\Pi : \LAdj[\clK,\clK]^\op\times\clK \times\clK \to\clK$, precisely the parameteric functor sending $(F,B,X)\mapsto (F,B,R_F(X\times B))$ where $F\dashv R_F$.
\end{proposition}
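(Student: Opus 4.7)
The plan is to imitate the proof of \autoref{its_always_a_foa}, replacing the parameter category $\clK^\op\times\clK$ (in which the first factor only parametrised the left-regular representation $A\mapsto A\otimes\firstblank$) with the larger parameter category $\LAdj[\clK,\clK]^\op\times\clK$. The crucial observation is that as soon as $F\dashv R_F$, a Mealy span $\mlyob Xds$ with input $F$ and output $B$ is the same datum as a single arrow $FX\to X\times B$, which transposes under the adjunction to a coalgebra structure $X\to R_F(X\times B)$; this is exactly the pointwise reason why the universal property we seek can hold.

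First I would define the endofunctor $\Pi$ over the base $\LAdj[\clK,\clK]^\op\times\clK$. On objects, set $\Pi(F,B,X)=(F,B,R_F(X\times B))$; on a morphism $(\alpha,f,h)\colon(F,B,X)\to(G,B',X')$, where $\alpha\colon G\To F$ lives in $\LAdj[\clK,\clK]^\op$, $f\colon B\to B'$ and $h\colon X\to X'$, use the mates correspondence to produce $\alpha^\dagger\colon R_F\To R_G$, and define the action of $\Pi$ on the third coordinate to be the composite
\[
R_F(X\times B)\xrightarrow{R_F(h\times f)} R_F(X'\times B')\xrightarrow{\alpha^\dagger_{X'\times B'}} R_G(X'\times B').
\]
Functoriality of $\Pi$ follows from the naturality of each $\alpha^\dagger$ together with the standard fact that the mates correspondence turns vertical composition into vertical composition; equivalently, that $F\mapsto R_F$ is a (pseudo)functor $\LAdj[\clK,\clK]^\op\to\Cat(\clK,\clK)$.

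Next, over a fixed parameter $(F,B)$, the adjunction $F\dashv R_F$ provides a natural bijection between arrows $FX\to X\times B$ and arrows $X\to R_F(X\times B)$; unpacking the product on the right recovers the pair $(d,s)$ of \Cref{obj_mly}, so the fibre of $\Pi$-coalgebras over $(F,B)$ is exactly $\Mly(F,B)$. A morphism of $\Pi$-coalgebras over a base morphism $(\alpha,f)$ then unfolds, via the same transposition, into the mixed-variance compatibility condition that defines morphisms in the restriction of $\clMly$ over $\LAdj[\clK,\clK]^\op\times\clK$. The universal property of this restricted $\clMly$ as the coalgebra object of $\Pi$ in $\Cat/(\LAdj[\clK,\clK]^\op\times\clK)$ is then obtained, exactly as in \autoref{its_always_a_foa}, by invoking the parametric form of (the dual of) \cite[Remark 3.3]{fib_of_alg}.

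The main obstacle I expect is the bookkeeping for morphisms in the first coordinate: one must check that the square expressing $\Pi$-coalgebra compatibility against $\alpha^\dagger_{X'\times B'}\circ R_F(h\times f)$ transposes, under the units and counits of $F\dashv R_F$ and $G\dashv R_G$, to the morphism condition in $\clMly$ recorded in \autoref{mly_functor_inK}. This is a routine mates-square diagram chase, but it is where essentially all the verification overhead lives; once it is completed, reduction to \cite[Remark 3.3]{fib_of_alg} is immediate, and the restriction to $\LAdj[\clK,\clK]$ was made precisely to ensure that the pointwise transposition $FX\to X\times B\leftrightarrow X\to R_F(X\times B)$ is available uniformly in the parameter $F$.
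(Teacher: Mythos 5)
Your proposal is correct and follows exactly the route the paper intends: the paper gives no separate proof for this proposition, merely stating that it follows ``in a completely analogous fashion'' to \autoref{its_always_a_foa}, i.e.\ by transposing a Mealy span $X\leftarrow FX\to B$ along $F\dashv R_F$ into a coalgebra $X\to R_F(X\times B)$, assembling these into a parametric endofunctor over $\LAdj[\clK,\clK]^\op\times\clK$, and invoking the dual of \cite[Remark 3.3]{fib_of_alg}. Your explicit treatment of the action on morphisms via the mates correspondence $\alpha\mapsto\alpha^\dagger$ supplies detail the paper leaves implicit, but it is the same argument.
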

\begin{theorem}
	Let $(\clK,\otimes,\de)$ be a differential 2-rig; denote $\LAdj[\clK,\clK]_{\otimes,l}$ the subcategory of \emph{lax monoidal} left adjoint functors $\clK\to\clK$; again suitably restricting $\clMly$ to be fibered over $\LAdj[\clK,\clK]_{\otimes,l}$, for every distributive law $\lambda : F\de\To\de F$ of $F$ over $\de$ we find a lifting of the derivative $\de$ to a derivative $\bar\de : \clMly\to\clMly$, defined on components as
	\[\vxy{
			\bar\de : \Mly(F,B) \ar[r] & \Mly(F,\de B)
		}\]
	by sending $\mlyob Xds$ to the `precomposition with $\lambda$':
	\[\vxy{
		\de X & \ar[l]_-{\de d}\de F X & F\de X\ar[r]^-{\lambda_X} \ar[l]_-{\lambda_X} & \de F X \ar[r]^-{\de s} & \de B.
		}\]
\end{theorem}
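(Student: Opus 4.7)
The plan is to mirror the proof of \autoref{its_diffe}, replacing the monoidal self-action $A\otimes-$ by an arbitrary lax monoidal left adjoint $F$, and substituting the given distributive law $\lambda:F\de\To\de F$ for the tensorial strength that played that role. The formula in the statement already specifies $\bar\de$ on objects: $\mlyob Xds$ is sent to the Mealy automaton with carrier $\de X$, output $\de B$, and structure maps $(\de d)\circ\lambda_X$ and $(\de s)\circ\lambda_X$. Functoriality inside each fibre is immediate from naturality of $\lambda$ and of $\de$, giving well-defined fibrewise functors $\bar\de_{F,B}:\Mly(F,B)\to\Mly(F,\de B)$. To glue them into a global $\bar\de:\clMly\to\clMly$, I would exhibit $\clMly$ (restricted over $\LAdj[\clK,\clK]_{\otimes,l}\times\clK$) as the coalgebra object for the parametric functor $\Pi(F,B,X)=(F,B,R_F(X\times B))$ of the immediately preceding proposition, in the spirit of \autoref{its_always_a_foa}; lifting $D=\id\times\de\times\de$ to a functor on $\Pi$-coalgebras is then equivalent to producing a distributive law $D\Pi\To\Pi D$, and this datum is extracted from $\lambda$ together with the unit and counit of each $F\dashv R_F$.

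For linearity, the pushforward description of fibrewise coproducts (\autoref{how_colimits_work}, transparently adapted with $F$ in place of $A\otimes-$) reduces the computation to linearity of $\de$ together with coproduct-preservation of $F$, the latter holding because $F$ is a left adjoint. For the Leibniz property, the lax monoidal hypothesis on $F$ equips (the appropriate restriction of) $\clMly$ with a monoidal structure, by tensoring two automata with carriers over $F$ and $G$ into one with carrier over $F\otimes G$ via the lax monoidal comparison; one then defines candidate left and right strengths for $\bar\de$ by interleaving the strengths of $\de$, this lax monoidal constraint, and $\lambda$ itself.

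The main obstacle, as already for \autoref{its_diffe}, is verifying that the canonical leibnizator so produced is an isomorphism. This should reduce, via the fibrewise coproduct construction, to the Leibniz property of $\de$ in $\clK$ plus a single pasting diagram expressing the compatibility of $\lambda$ with the lax monoidal constraint of $F$ (the correct notion of a mixed distributive law compatible with monoidal structure). This is the only essentially new coherence to check, and it is where the lax monoidal hypothesis on $F$ is used non-trivially; everything else is the bookkeeping already displayed in the proof of \autoref{its_diffe}, transported verbatim by substituting $\lambda$ for the strength $\tau$ of $\de$ applied to $A\otimes-$.
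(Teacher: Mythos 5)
Your proposal follows exactly the route the paper intends: the paper offers no separate proof of this theorem, stating only that it follows "in a completely analogous fashion" to \autoref{its_diffe} via the coalgebraic presentation of the preceding proposition, and your sketch is precisely that transport — substituting $\lambda$ and its mate under $F\dashv R_F$ for the tensorial strength in the distributive law, and reusing the fibrewise coproduct description of \autoref{how_colimits_work} for linearity and the Leibniz isomorphism. You in fact go slightly beyond the paper by isolating the one genuinely new coherence condition (compatibility of $\lambda$ with the lax monoidal constraint of $F$), which the paper leaves implicit.
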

Let's observe that linearity of $\bar\de$ can be extended to preservation of all colimits preserved by $\de$ in the base: the proof goes as for coproducts, and uses the explicit description of colimits given in \autoref{how_colimits_work}. Thus we obtain at once
\begin{corollary}
	The total category $\clMly[\Spc]^\otimes$ constructed in \autoref{tot_mach}, \Cref{mon_tot_mach} (underlying category), and \autoref{its_monoidal} (monoidal structure) is a differential 2-rig with respect to the functor $\bar\de$ defined as in \autoref{xavier_barde}, and $\bar\de$ commutes with all colimits.
\end{corollary}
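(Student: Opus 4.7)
The first assertion is an immediate instance of \autoref{its_diffe}: the category $\Spc$ is a differential 2-rig for the Day convolution $\Day$ and the derivative functor $\de$ of \autoref{diff_of_spc}, so the construction in \Cref{xavier_barde} endows $\clMly[\Spc]^\otimes$ with the structure of a differential 2-rig whose derivative is $\bar\de$. The content of the corollary is therefore really the cocontinuity of $\bar\de$, which sharpens the linearity already established for binary (and hence finite) coproducts to \emph{all} small colimits.

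My plan is to prove cocontinuity of $\bar\de$ by combining two ingredients: on one hand, in $\Spc$ the derivative $\de$ is a left adjoint by \autoref{triple_der_adj}, hence preserves every small colimit; on the other hand, colimits in $\clMly[\Spc]^\otimes$ admit the explicit two-stage description recalled in \autoref{how_colimits_work}. Given a diagram $H : \clJ \to \clMly[\Spc]^\otimes$ with $H(J)=\mlyob{X_J}{d_J}{s_J}$ over $(A_J,B_J)$, one first sets $A:=\lim_J A_J$ and $B:=\colim_J B_J$, reindexes each $H(J)$ along the universal cone/cocone into the single fibre $\Mly(A,B)$, and then takes the colimit inside that fibre, where it is created on carriers by the canonical forgetful functor to $\Spc$ afforded by \autoref{its_always_a_foa}.

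The argument then has three steps. First, I would observe that $\bar\de$ does not touch the first base coordinate and acts as $\de$ on the second, so that the analogous base data for the diagram $\bar\de \circ H$ are $(\lim_J A_J,\colim_J \de B_J)\cong (A,\de B)$, using that $\de$ preserves colimits in $\Spc$; thus $\bar\de$ commutes with the ``base change'' stage up to canonical isomorphism. Second, I would verify that $\bar\de$ intertwines the reindexing pushforwards $(\pi_J,\iota_J)_*$ with the pushforwards $(\pi_J,\de\iota_J)_*$ induced by the $\de$-transformed cocone, which is a naturality check that follows from the fact that $\bar\de$ is built out of the distributive law $\de R_{AB}\Rightarrow R_{A,\de B}\de$ used in the proof of \autoref{its_diffe}, exactly as in the coproduct calculation already carried out there. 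Third, inside a fixed fibre $\Mly(A,B)$, $\bar\de$ acts on carriers as $\de$ and on structure maps by post-composition with $\de$ and the tensorial strength $i$; since colimits in the fibre are created carrierwise by the forgetful functor to $\Spc$, and $\de$ is cocontinuous there, $\bar\de$ preserves colimits in each fibre.

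The main obstacle I anticipate is purely bookkeeping: verifying that the reindexing step commutes with $\bar\de$ requires carefully comparing the pushforward $(\pi_J,\iota_J)_*$, which tensors on the $A$-side and post-composes on the $B$-side, with the action of $\bar\de$, which post-composes with $\de$ and the strength. Both operations act only on structure maps, and the relevant coherence reduces to the naturality of the right tensorial strength of $\de$ with respect to morphisms in the base, essentially the same square that already appears in the Leibniz verification of \autoref{its_diffe}. Once this is in place, the three steps combine to exhibit a canonical isomorphism between $\bar\de(\colim_J H(J))$ and $\colim_J \bar\de(H(J))$, which is the claim.
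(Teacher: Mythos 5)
Your proposal is correct and follows essentially the same route as the paper, which disposes of the corollary in one sentence by noting that the linearity argument for $\bar\de$ extends verbatim from coproducts to all colimits preserved by $\de$ in the base, using the two-stage description of colimits in $\clMlyTens$ (base change, then fibrewise computation created on carriers) together with the cocontinuity of $\de$ on $\Spc$ coming from $L\dashv\de\dashv R$. Your three-step elaboration (base data, intertwining the reindexing pushforwards via naturality of the strength, fibrewise preservation) is precisely the bookkeeping the paper leaves implicit.
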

\begin{proposition}\label{its_prese}
	The category $\clMly[\Spc]^\otimes$ is locally presentable, so by the special adjoint functor theorem (cf.~\cite[§3.3]{Bor1}) $\bar\de$ has a left adjoint; in fact, more is true:
	\begin{itemize}
		\item the \emph{fibration} of \Cref{monomealy_fib} is accessible (and cocomplete, hence locally presentable) in the sense of \cite[5.3.1]{makkai1989accessible}, i.e. the total category $\clMly[\Spc]^\otimes$ is locally presentable, the projection $\langle p,q\rangle$, all reindexing functors are accessible, and the pseudofunctor associated to the fibration preserves filtered colimits.
		\item the $\bar\de$ functor is also continuous, hence $(\clMly[\Spc]^\otimes,\Day,\bar\de)$ is a \emph{scopic} differential 2-rig in the sense of \autoref{scopic_2_rig}.
	\end{itemize}
\end{proposition}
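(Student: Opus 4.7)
The proposition packages three intertwined claims: local presentability of $\clMly[\Spc]^\otimes$, accessibility of the monoidal Mealy fibration in the sense of \cite[5.3.1]{makkai1989accessible}, and two-sided adjointability of $\bar\de$. I would prove them in sequence, leveraging the fact that $\Spc$ is a presheaf topos (hence locally finitely presentable) and that $\de:\Spc\to\Spc$ fits in the triple of adjoints $L\dashv\de\dashv R$ of \autoref{triple_der_adj}.

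For local presentability, each fibre $\Mly[\Spc](A,B)$ is the category of coalgebras for the accessible endofunctor $R_{AB}:X\mapsto \homDay[A][X\times B]$ of \autoref{its_always_a_foa}, hence locally presentable by the argument recorded in \autoref{remark_on_compl_2}. The total category $\clMly[\Spc]^\otimes$ arises as a 2-pullback in $\mathbf{Cat}$ of a pseudofunctor valued in accessible categories and accessible functors, and closure of locally presentable categories under such bilimits \cite[Ch. 5]{makkai1989accessible} then yields that $\clMly[\Spc]^\otimes$ is locally presentable. Accessibility of the fibration amounts to checking the four conditions of \cite[5.3.1]{makkai1989accessible}: the projection $\langle p^\otimes,q^\otimes\rangle$ is accessible by construction, each reindexing functor (described by pre-/post-composition with morphisms in the base) is a composite of accessible operations, and the associated pseudofunctor preserves filtered colimits because filtered colimits commute with finite limits in any presheaf topos.

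To establish continuity of $\bar\de$, I would dualise the description of colimits in \autoref{how_colimits_work}: given a diagram $J\mapsto\mlyob{X_J}{d_J}{s_J}_{A_JB_J}$, its limit in $\clMly[\Spc]^\otimes$ is carried by $\lim_J X_J$, lives over $(\colim_J A_J,\lim_J B_J)$ in $\Spc^\op\times\Spc$, and has structure maps assembled from the $d_J,s_J$ after reindexing along the canonical legs. The action of $\bar\de$ on such an object consists in applying $\de$ to the carrier and to the output base; since $\de$ preserves all limits (as a left adjoint to $R$), we obtain $\de\lim_J X_J\cong\lim_J\de X_J$ and $\de\lim_J B_J\cong\lim_J\de B_J$, so $\bar\de$ commutes with the formation of this limit. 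The verification for arbitrary limit shapes is analogous.

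Having established continuity, and recalling that $\bar\de$ is cocontinuous by the corollary immediately preceding the proposition, the adjoint functor theorem for locally presentable categories \cite[§3.3]{Bor1} applied in both directions yields a left adjoint $\bar L$ to $\bar\de$ (from continuity) and a right adjoint $\bar R$ (from cocontinuity), making $(\clMly[\Spc]^\otimes,\Day,\bar\de)$ a scopic differential 2-rig in the sense of \autoref{scopic_2_rig}. I expect the main obstacle to be the uniformity demanded by the accessibility criteria of \cite[5.3.1]{makkai1989accessible}: one must exhibit a single regular cardinal controlling presentability and accessibility of every fibre and every reindexing simultaneously. This should be extractable from the fact that $\Spc$ is locally \emph{finitely} presentable and that $R_{AB}$ is finitary in $X$ for each fixed $A,B$, but the cardinal bookkeeping across the pseudofunctor is where the delicacy lies.
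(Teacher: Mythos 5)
Your outline of local presentability and of the accessibility of the fibration is in line with what the paper treats as routine, and your observation that connected limits reduce to the limit-preservation of $\de$ in $\Spc$ is correct. However, your continuity argument has a genuine gap exactly where the paper's proof concentrates all its effort. You assert that the limit of a diagram $J\mapsto\mlyob{X_J}{d_J}{s_J}_{A_JB_J}$ is \emph{carried by} $\lim_J X_J$ after reindexing into the fibre over $(\colim_J A_J,\lim_J B_J)$. This is only true for \emph{connected} limits: the forgetful functor from a fibre $\Mly(A,B)$ to $\Spc$ creates colimits and connected limits, but not products or terminal objects (cf.\ \autoref{remark_on_compl_1}, where the terminal object of $\Mly(A,B)$ is computed to be $\homDay[A^+][B]$, certainly not the terminal species). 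So "dualising" \autoref{how_colimits_work} wholesale does not work, and the case your argument silently skips --- the terminal object, and with it all products --- is precisely the one that is not formal.

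Concretely, to show $\bar\de$ preserves terminal objects one must exhibit an isomorphism
\[\textstyle\bar\de\mlyob{\homDay[A^*][B]}{d_\infty}{s_\infty}_{AB}\cong\mlyob{\homDay[A^*][\de B]}{d_\infty'}{s_\infty'}_{A,\de B},\]
which amounts to $\de\homDay[X][Y]\cong\homDay[X][\de Y]$. This is \emph{not} a consequence of $\de$ being a right adjoint; by mating it is equivalent to $L(X\Day Y)\cong LX\Day Y$, which holds in $\Spc$ only because of the special fact that $L\cong y\Uno\Day\firstblank$ is a tensor functor (cf.\ \autoref{flesh_deriv} and \autoref{mangusta}). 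Your argument never invokes this property, and without it the continuity of $\bar\de$ --- hence the existence of the left adjoint and the scopic structure --- does not follow. The remaining bookkeeping you worry about (a uniform regular cardinal across fibres) is comparatively harmless, since $\Spc$ is locally finitely presentable and $R_{AB}$ is finitary; the real delicacy is the terminal-object computation above.
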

\begin{proof}
	The only verification that is not completely immediate is that $\bar\de$ preserves all limits; this can be reduced to the verification that $\bar\de$ preserves the terminal object as described in \autoref{remark_on_compl_1}, because connected limits are created by the forgetful functor to $\Spc$. We then have to establish an isomorphism
	\[\textstyle\bar\de\mlyob{\homDay[A^*][B]}{d_\infty}{s_\infty}_{AB} \cong \mlyob{\de\homDay[A^*][B]}{\de d_\infty \circ i}{\de s_\infty\circ i}\cong \mlyob{\homDay[A^*][\de B]}{d_\infty'}{s_\infty'}_{A,\de B}.\]
	Note that for every $X,Y\in\Spc$ there is an isomorphism $\theta : \de\homDay[X][Y]\cong \homDay[X][\de Y]$, induced by the fact that these two objects are isomorphic if and only if for every $X,Y$ one has $L(X\Day Y)\cong LX\Day Y$ (which is obviously true considering that $L\cong y\Uno\Day\firstblank$, cf.~\autoref{flesh_deriv} and \ref{mangusta}). One can then verify that $\theta$ is indeed an isomorphism in $\clMly[\Spc]^\otimes$.
\end{proof}
The following lemma splits the verification that $\Mly$, defined in \autoref{tot_mach}, preserves filtered colimits in both components into two parts. As an immediate corollary (filtered categories are sifted), $\Mly$ preserves filtered colimits.

Verifying the first part is a straightforward consequence of the fact that $\Alg(\firstblank)$ preserves filtered colimits, in the sense that if $\clJ$ is a $\lambda$-filtered category, $\Alg(\colim_\clJ F_i)\cong \lim_\clJ\Alg(F_i)$. The key result allowing us to prove the second part is the fact that $R$ as described in \autoref{descr_di_LR} also preserves filtered colimits, hence for every filtered diagram, one has an isomorphism of comma categories $(F/\colim_\clJ B_i)\cong \colim_\clJ \big(F/B_i\big)$.
\begin{lemma}
	For every fixed output object $B\in\clK$, the functor $\Mly(-,B)$ preserves filtered colimits. For every fixed dynamics $F : \clK\to\clK$, the functor $\Mly(F,-)$ preserves filtered colimits.
\end{lemma}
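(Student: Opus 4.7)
The plan is to decompose $\Mly$ through the pullback $\Mly(F,B)\cong \Alg(F)\times_\clK (F/B)$ of \autoref{mly_n_mre} and treat each factor separately, exploiting throughout that in $\Cat$ filtered colimits commute with finite limits (and pullbacks in particular). One terminological clarification is in order: since $\Alg(-)$ and $(-/B)$ are both contravariant in their endofunctor argument, the assertion that $\Mly(-,B)$ `preserves filtered colimits' is to be read in the sense spelled out by the explanatory clause in the paragraph before the lemma, namely as the identification $\Mly(\colim_\clJ F_i,B)\cong \lim_\clJ \Mly(F_i,B)$; the second half, being covariant in $B$, asserts preservation in the ordinary sense.

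For the first half, fix $B\in\clK$ and let $F_\bullet:\clJ\to [\clK,\clK]$ be a filtered diagram with colimit $F=\colim F_i$. An $F$-algebra is a morphism $(\colim F_i)X=\colim (F_iX)\to X$, and by the universal property of the pointwise colimit such morphisms correspond to compatible cones of $F_i$-algebra structures on $X$, giving $\Alg(F)\cong \lim_\clJ \Alg(F_i)$. The same reasoning applied to morphisms targeting $B$ gives $(F/B)\cong \lim_\clJ (F_i/B)$, and both identifications are compatible with the forgetful functors to $\clK$. Because limits commute with limits, the pullback defining $\Mly(F,B)$ is then the limit in $\Cat$ of the pullbacks defining the $\Mly(F_i,B)$, whence $\Mly(F,B)\cong\lim_\clJ\Mly(F_i,B)$.

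For the second half, fix a left adjoint $F\dashv R$ (as in \autoref{remark_on_compl_1}) and let $B=\colim_\clJ B_i$ be a filtered colimit in $\clK$. The factor $\Alg(F)$ is constant in $B$. Via the adjunction one has $F/B\cong \clK/RB$; the hypothesis that $R$ preserves filtered colimits (guaranteed in the species setting by the explicit description of $R$ in \autoref{descr_di_LR} as a finite product of evaluations) yields $R(\colim B_i)\cong \colim RB_i$, while the slice pseudofunctor $\clK/-$ preserves filtered colimits because slices of filtered colimit cocones are computed pointwise and the projection $\clK/X\to\clK$ creates them. Hence $F/B\cong \colim_\clJ (F/B_i)$. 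Commuting this filtered colimit with the pullback against $\Alg(F)\to\clK$ finally gives $\Mly(F,B)\cong\colim_\clJ\Mly(F,B_i)$.

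The main technical obstacle is the last commutation in the second half: pullbacks commuting with filtered colimits in $\Cat$. This rests on the standard fact that filtered colimits commute with finite limits in every locally presentable category, $\Cat$ included, and it is the same input underlying \autoref{remark_on_compl_2}. A secondary subtlety is the preservation of filtered colimits by $R$: this is automatic for the $R$ of \autoref{descr_di_LR} and, more generally, whenever the adjunction $F\dashv R$ is finitary, which is the regime in which the lemma is used.
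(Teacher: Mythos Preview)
Your proposal follows essentially the same route as the paper's sketch: both arguments decompose $\Mly(F,B)$ via the pullback of \autoref{mly_n_mre}, invoke $\Alg(\colim_\clJ F_i)\cong\lim_\clJ\Alg(F_i)$ for the first half, and for the second half use $F/B\cong\clK/RB$ together with the finitary nature of $R$ from \autoref{descr_di_LR} to obtain $(F/\colim_\clJ B_i)\cong\colim_\clJ(F/B_i)$. You are somewhat more explicit than the paper in two places---you spell out that the comma factor $F_i/B$ must also be handled in the first half, and you name the final ingredient (commutation of filtered colimits with pullbacks in $\Cat$) that glues the pieces together---but these are elaborations of the same argument rather than a different one.
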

\subsection{The structure of $\texorpdfstring{\Alg(\de)}{AlgD}$}\label{struc_of_algD}
The scope of the present subsection is to study the category of $\de$-algebras. Understandably, the term `differential algebra' would be quite a misnomer, hence our choice to refer to such objects as \emph{derivative algebras}. More explicitly:
\begin{definition}[$\de$-algebras]\label{de_alg_def}
	Let $(\clK,\otimes,\de)$ be a differential 2-rig. The category $\Alg(\de)$ of \emph{derivative algebras} is defined as the category of endofunctor algebras of the derivative $\de$; explicitly, it's the category having
	\begin{itemize}
		\item objects the pairs $(X,\xi)$ where $\xi : \de X \to X$ is a morphism in $\clK$;
		\item morphisms $(X,\xi)\to(Y,\theta)$ the morphisms $f : X \to Y$ in $\clK$ such that $\theta\circ\de f = f\circ\xi$, or in diagrammatic terms
		      \[\vxy{
				      \de X \ar[r]^{\de f}\ar[d]_\xi & \de Y \ar[d]^\theta \\
				      X \ar[r]_f & Y
			      }\]
	\end{itemize}
	where composition and identities are defined as in $\clK$.
\end{definition}
General facts about categories of endofunctor algebras entail that
\begin{remark}
	The category $\Alg(\de)$ admits all limits that $\clK$ admits, and such limits are created by the obvious forgetful functor $U : \Alg(\de)\to\clK : (X,\xi)\mapsto X,f\mapsto f$.

	Colimits in $\Alg(\de)$ are also created by $U$, as long as they are preserved by $\de$; so, if $\de$ preserves coproducts, $U$ creates coproducts, and if $\de$ preserves all colimits, $U$ creates all colimits.
\end{remark}
\begin{corollary}\label{itsa_2_rig}
	If $\clK$ is a $\bfD$-2-rig, the category $\Alg(\de)$ of derivative algebras admits $\bfD$-colimits.
\end{corollary}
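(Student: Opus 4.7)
The plan is to invoke the standard fact about endofunctor algebras recalled in the preceding remark: the forgetful functor $U : \Alg(\de) \to \clK$ creates every colimit that exists in $\clK$ and is preserved by $\de$. With this in hand, the entire content of the corollary reduces to a single verification, namely that the derivative of a (differential) $\bfD$-2-rig preserves $\bfD$-colimits.

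To justify that preservation, I would appeal to the doctrinal framework of \autoref{doctrines_for_all}: the literal statement ``$\de$ preserves all colimits'' from \autoref{diff_of_spc} must be re-read so as to be compatible with the level of cocompleteness guaranteed by the ambient doctrine $\bfD$. The correct reading of the linearity axiom \ref{d_1} in this setting is that $\de$ is a $T_\bfD$-pseudoalgebra morphism, \emph{i.e.}~commutes with $\bfD$-colimits; this is the condition already imposed, \emph{mutatis mutandis}, on each tensor functor $A \otimes \firstblank$ in \autoref{doctrines_for_all}, and it is precisely what makes the Leibniz isomorphism \ref{d_2} well typed in this enlarged doctrinal setting.

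Given these two ingredients, the proof becomes essentially mechanical: starting from a diagram $H : \clJ \to \Alg(\de)$ with $\clJ \in \bfD$, the underlying diagram $UH$ has a colimit $C$ in $\clK$ because $\clK$ is $\bfD$-cocomplete; then $\de$ preserves $C$, and the family of structure maps $\{\xi_J : \de U H(J) \to UH(J)\}$ assembles into a canonical algebra structure $\de C \to C$, whose universal property in $\clK$ lifts to that of $\colim H$ in $\Alg(\de)$. I do not foresee any genuine obstacle; the only subtle point is more a clarification than a difficulty, namely the need to be explicit about what the word ``linear'' means in the chosen doctrine, after which the statement becomes a direct instance of the classical theorem that endofunctor-algebra forgetful functors create colimits preserved by the endofunctor.
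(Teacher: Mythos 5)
Your argument is correct, but it is not the route the paper's printed proof takes. You prove the corollary ``from below'': the forgetful functor $U : \Alg(\de)\to\clK$ creates any colimit that exists in $\clK$ and is preserved by $\de$, and the linearity axiom \ref{d_1}, read doctrinally, says exactly that $\de$ is a $T_\bfD$-pseudoalgebra morphism, \ie preserves $\bfD$-colimits; the explicit construction of the induced algebra structure on $\colim UH$ (post-compose the structure maps $\xi_J$ with the colimit injections to get a cocone under $\de UH$, then use $\de(\colim UH)\cong\colim \de UH$) is standard and goes through without obstruction. This is in substance the argument already sketched in the unnumbered remark immediately preceding \autoref{itsa_2_rig}. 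The paper's proof instead argues ``from above'': it observes that $\Alg(\de)$ is the inserter of $\de$ and $\id$, that a $\bfD$-2-rig is a pseudoalgebra for the 2-monad $T_\bfD M$ of \autoref{doctrines_for_all}, and that the forgetful 2-functor from pseudoalgebras to $\Cat$ creates inserters (indeed all 2-limits), so the cocompleteness structure lifts to the inserter wholesale. The 2-monadic argument is shorter once the doctrinal machinery is in place and is better positioned for the subsequent development (it exhibits $\Alg(\de)$ as a limit \emph{in the 2-category of structured categories}, which is the right framing for \autoref{anchelui_diffrig} and for the iterated tower of \autoref{the_taylor}); your argument is more elementary, self-contained, and makes the colimit cocones explicit. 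Note that it applies the creation argument only to the additive ($T_\bfD$) part of the doctrine, which is all that is needed here, since $\de$ is not a strong monoidal functor and so is not a 1-cell of $\cate{2Rig}_{\bfD}$; your explicit insistence on re-reading ``linear'' relative to the ambient doctrine is the one genuinely necessary clarification, and you handle it correctly.
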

\begin{proof}
	In the terminology of \autoref{doctrines_for_all} a $\bfD$-2-rig is nothing but a $T_\bfD M$-pseudoalgebra, from the characterization of derivative algebras as the inserter of $\de$ and $\id[\clK]$ one swiftly deduces that the forgetful functor from $\bfD$-2-rigs to $\Cat$	creates inserters (as well as all other 2-limits), thus giving a slick proof of \autoref{itsa_2_rig}.
\end{proof}

In particular, the main result of this section (\autoref{anchelui_diffrig} below) shows that, remarkably, the category $\Alg(\de)$ is a differential 2-rig thanks to the fact that $\de$ satisfies the Leibniz property. Note that in full generality a sufficient (but, as this example shows, not  necessary) condition ensuring that $\Alg(F)$ is monoidal, and the forgetul functor $U : \Alg(F)\to\clK$ is strict monoidal, is that $F$ is oplax monoidal, and that $\de$ is instead very far from being oplax monoidal.

\begin{theorem}\label{anchelui_diffrig}
	Let $(\clR,\otimes,\de)$ be a differential 2-rig such that $\de I\cong\varnothing$. Define a structure on $\Alg(\de)$ as follows:
	\begin{itemize}
		\item a tensor bifunctor
		      \[\vxy{\firstblank\boxtimes\firstblank : \Alg(\de)\times \Alg(\de)\ar[r] & \Alg(\de)}\]
		      sending two $\de$-algebras $(A,\alpha),(B,\beta)$ to the object $(A,\alpha)\boxtimes(B,\beta):=(A\otimes B,\alpha\boxtimes\beta)$ where
		      \[\label{formica}\vxy{
			      \alpha\boxtimes\beta : \de(A\otimes B) \ar[r]^{\ell^{-1}} & \de  A \otimes B + A \otimes \de B \ar[r]^-{\cop[s]{\alpha\otimes B}{A\otimes\beta}}& A\otimes B
			      }\]
		      is the candidate $\de$-algebra map (and $\ell$ the Leibniz isomorphism of \autoref{def_gen_diff2rig}.\ref{d_2}).
		\item an algebra structure on $I$ given by the initial map $\bang:\de I \cong \varnothing \to I$;
		\item associators and unitors deduced from those in $\clR$.
	\end{itemize}
	Then, $(\Alg(\de),\boxtimes, (I,\bang))$ is a monoidal category. Furthermore, if $(\clR,\otimes,\de)$ is a differential 2-rig for the doctrine of colimits $\bfD$, then $\Alg(\de)$ is a differential 2-rig as well (for the same doctrine of colimits) in such a way that the forgetful functor $U : \Alg(\de) \to \clR$ is a differential 2-rig morphism.
\end{theorem}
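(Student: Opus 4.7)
The plan is to exhibit $U : \Alg(\de) \to \clR$ as strict in every relevant sense --- strict monoidal, strict on coproducts, and strictly commuting with the to-be-constructed lifted derivative --- so that all coherence axioms lift from $\clR$. Since $U$ is faithful and creates all colimits preserved by $\de$, the pentagon, triangle, $\de$-linearity, and Leibniz axioms will reduce to their counterparts in $\clR$; the real substance is to verify that candidate structure maps in $\clR$ are $\de$-algebra morphisms. I would first check bifunctoriality of $\boxtimes$: given algebra maps $f,g$, expand $(\alpha'\boxtimes\beta')\circ\de(f\otimes g)$ using naturality of $\ell^{-1}$ along $f\otimes g$, then apply $f\alpha = \alpha'\de f$ and $g\beta = \beta'\de g$, and conclude via the universal property of the coproduct $\de A\otimes B + A\otimes\de B$ that the resulting map equals $(f\otimes g)\circ(\alpha\boxtimes\beta)$.

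For the unit, the hypothesis $\de I \cong \varnothing$ combined with cocontinuity of $\firstblank\otimes A$ forces $\de I \otimes A \cong \varnothing$, so $\ell^{-1} : \de(I\otimes A) \to \de I\otimes A + I\otimes \de A$ reduces to the inverse of the right tensorial strength $\tau''_{I,A}$. The strength coherence axiom $\de\lambda_A \circ \tau''_{I,A} = \lambda_{\de A}$ then makes $\lambda_A$ an algebra morphism $(I,\bang)\boxtimes(A,\alpha)\to(A,\alpha)$; the right unitor is symmetric via $\tau'$. For associativity, iterating $\ell^{-1}$ twice on $\de((A\otimes B)\otimes C)$ and on $\de(A\otimes(B\otimes C))$ yields the same threefold decomposition $\de A\otimes B\otimes C + A\otimes \de B\otimes C + A\otimes B\otimes \de C$ up to natural isomorphism, and the two induced algebra structures agree summand-wise; hence $\alpha_{A,B,C}$ lifts. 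Pentagon and triangle follow because $U$ is faithful and they hold in $\clR$. Doctrine-$\bfD$ colimits transport because $U$ creates them (as $\de$ preserves them) and $\boxtimes$ preserves them separately in each variable, inherited from $\otimes$.

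To finish, I would define $\de' : \Alg(\de) \to \Alg(\de)$ by $(A,\alpha)\mapsto(\de A,\de\alpha)$, which is well-defined on morphisms because applying $\de$ preserves the algebra-map equation. Linearity of $\de'$ is inherited from $\de$. The core technical point is the Leibniz axiom: the Leibniz isomorphism $\ell$ of $\clR$ must lift to an iso in $\Alg(\de)$ between $\de'(A,\alpha)\boxtimes(B,\beta) + (A,\alpha)\boxtimes\de'(B,\beta)$ and $\de'((A,\alpha)\boxtimes(B,\beta))$. After expanding $\de(\alpha\boxtimes\beta) = \de([\alpha\otimes B, A\otimes\beta]\circ\ell^{-1})$ using that $\de$ preserves coproducts, this becomes an identity of maps $\de\de A\otimes B + \de A\otimes \de B + \de A\otimes\de B + A\otimes \de\de B \rightrightarrows \de(A\otimes B)$, which is the second-order Leibniz rule in disguise; I expect this to be the main technical obstacle, but it reduces to naturality of $\ell$ and a single coherence square relating $\de\ell$ with $\ell_{\de A,B}$ and $\ell_{A,\de B}$, a purely formal consequence of the Leibniz structure. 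By construction $U$ strictly preserves tensor, unit, coproducts, and the derivative, hence is a strict morphism of differential 2-rigs.
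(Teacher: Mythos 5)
Your treatment of the monoidal structure is correct and is essentially the paper's argument written out in full: the paper's own proof consists only of the observation that $\de I\cong\varnothing$ forces the unit structure and that the associator and unitor coherences ``boil down'' to those of $\clR$, while your verifications (bifunctoriality of $\boxtimes$ via naturality of $\ell^{-1}$ and the algebra-map equations; the unitors via $\de I\otimes A\cong\varnothing$ and the unit axiom of the strength $\tau''$; the associator via the threefold decomposition of $\de(A\otimes B\otimes C)$; pentagon and triangle via faithfulness of $U$; creation of $\bfD$-colimits) supply exactly the substance the paper elides. Up to that point the proposal is fine, and indeed more informative than the printed proof.

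The gap is in the last step, the lifting of the Leibniz isomorphism, which you correctly single out as the crux but then dismiss as ``a purely formal consequence of the Leibniz structure''. It is not. Write out the condition for $\tau'_{A,B} : \de A\otimes B\to\de(A\otimes B)$ to be a morphism of $\de$-algebras from $\de'(A,\alpha)\boxtimes(B,\beta)=(\de A\otimes B,(\de\alpha)\boxtimes\beta)$ to $\de'\bigl((A,\alpha)\boxtimes(B,\beta)\bigr)=(\de(A\otimes B),\de(\alpha\boxtimes\beta))$. Precomposing with $\ell_{\de A,B}$ and splitting over the two summands of $\de(\de A\otimes B)\cong\de\de A\otimes B+\de A\otimes\de B$, the first summand reduces to naturality of $\tau'$ in its first variable at $\alpha$ (fine), but the second summand demands
\[\tau'_{A,B}\circ(\de A\otimes\beta)\;=\;\tau''_{A,B}\circ(\alpha\otimes\de B)\ :\ \de A\otimes\de B\longrightarrow\de(A\otimes B),\]
an identity that genuinely involves the structure maps $\alpha,\beta$ and is not implied by naturality or by any coherence axiom for $\tau',\tau''$. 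It already fails in $\Spc$: take $A=B=\Exp$ with $\alpha=\beta$ the canonical isomorphism $\de\Exp\cong\Exp$; then $\Exp\Day\Exp\cong\wp$, $\de\wp\cong\wp+\wp$, and $\tau',\tau''$ are the two distinct coproduct injections, so the two sides differ. Hence neither $\ell$ nor even the single strength $\tau'$ lifts to $\Alg(\de)$ with the structures as you (and the theorem) define them, and the Leibniz property of $\de'$ requires either a different candidate isomorphism, additional hypotheses, or a restriction to algebras compatible with the strengths. Note that the paper's printed proof is silent on the entire ``furthermore'' clause (the derivative $\de'(X,\xi)=(\de X,\de\xi)$ is only introduced in the remark that follows), so this is a point you must repair rather than look up.
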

\begin{proof}
	Clearly, the condition $\de I\cong\varnothing$ is needed to equip the monoidal unit with a (unique) algebra structure $\de I \cong \varnothing \to I$ and make $(I,!)$ the monoidal unit. The associator and unitor diagrams proving that $\firstblank\boxtimes\firstblank$ is a tensor functor on $\Alg(\de)$ all boil down to the associator and unitor diagrams of the monoidal structure on $\clR$.
\end{proof}
\begin{remark}
	The above construction relies on the identity 2-cell $\de\de\To\de\de$ as a fairly trivial choice of distributive law of $\de$ over itself; there can be other choices: in the category of species, only one is nontrivial: from \autoref{flesh_deriv} $\de = \homDay[\Uno][\firstblank]$, so that with a Yoneda argument one gets that
	\[[\Spc,\Spc](\de\de,\de\de) = [\Spc,\Spc](\homDay[\Due],\homDay[\Due])\cong\sfP(\Due,\Due)\cong S_2\]
\end{remark}
\begin{remark}[The Taylor expansion of an object]\label{the_taylor}
	Note that this procedure for obtaining a differential 2-rig $\Alg(\de)$ of derivative algebras can be iterated, constructing a tower
	\[\label{die_torre}\vxy{
		\Alg(\de'')\ar[d] \ar@{}[r]|-\dots & \Alg(\de'')\ar[d]\\
		\Alg(\de') \ar[d]\ar[r]_-{\de''}& \Alg(\de')\ar[d]\\
		\Alg(\de)\ar[d] \ar[r]_-{\de'}& \Alg(\de)\ar[d]\\
		\clR \ar[r]_-\de & \clR
		}\]
	where each endofunctor action is essentially the one of $\de$; more precisely, $\de'$ acts on a $\de$-algebra sending $(X,\xi : \de X\to X)$ to $(\de X,\de\xi)$; and $\de''$ acts on a $\de'$-algebra $((X,\xi),\varsigma : \de'(X,\xi)\to (X,\xi))$ sending it to $(\de\de X,\de\varsigma)$, where $\varsigma$ is a homomorphism of $\de$-algebras, such that
	\[\vxy{
			\de\de X \ar[d]_{\de \xi}\ar[r]^{\de\varsigma}& \de X \ar[d]^\xi\\
			\de X \ar[r]_\varsigma & X
		}\]
	determining essentially by definition a `second-degree expansion' $X\leftarrow \de X \leftarrow \de\de X$; inductively, then, one determines for every object $X\in\clR$ of a differential 2-rig $(\clR,\de)$ as in \autoref{anchelui_diffrig} a chain
	\[\vxy{
			X &\ar[l] \de X &\ar[l] \de\de X &\ar[l] \cdots &\ar[l] \de^{(n)} X &\ar[l] \cdots
		}\]
	called the \emph{Taylor chain} of $X$. More precisely, one can build the following object inductively:
	\begin{itemize}
		\item $\clR^{(0)} := \clR$ and $\clR^{(n+1)} := \Ins(\de^{(n)}, \clR^{(n)})$ (\ie, the inserter realizing the category of endofunctor algebras for the functor $\de^{(n)}$);
		\item $\de^{(1)} := \de$ and $\de^{(n+1)} := \clR^{(n+1)}\to \clR^{(n+1)}$ defined lifting $\de^{(n)}$.
	\end{itemize}
	The categories $\clR^{(n)}$ and the functors $U^{(n)} : \clR^{(n+1)} \to \clR^{(n)}$ arrange then as the tower in diagram \Cref{die_torre}, and one can then consider the limit of such a chain,
	\[\label{furetto}\Jet[\clR,\de]:=\lim\left(\clR \xot{U} \clR^{(1)} \xot{U^{(1)}} \clR^{(2)} \xot{U^{(2)}}\cdots \right).\]
	Such a category is called the \emph{jet category} of $\clR$, and a typical object in $\Jet[\clR,\de]$ consists of a countable sequence
	\[\vec X = \left( X
		, \Big(X; \prevar[\xi]{\de X}X\Big)
		, \Big((X;\xi);\prevar[\xi']{\de'(X;\xi)}{(X;\xi)}\Big)
		, \dots
		\right)\] the $n^\text{th}$ element of which equips the $(n-1)^\text{th}$ with an algebra structure for $\de^{(n)}$. Morphisms are determined similarly.

	In analogy with differential geometry, where the $k$-jet of a real valued function $f : \bbR\to \bbR$ is defined as
	\[(J^k_{x_0}f)(z)
		=\sum_{i=0}^k \frac{f^{(i)}(x_0)}{i!}z^i
		=f(x_0)+f'(x_0)z+\cdots+\frac{f^{(k)}(x_0)}{k!}z^k.\]
	we define the \emph{$k$-jet} $J^k(\vec X)$ of an object $\vec X \in \Jet[\clR,\de]$ as the image of $\vec X$ under the functor $J^k$ obtained from the limit projections $\pi_k : \Jet[\clR,\de] \to \clR^{(k)}$ as
	\[\vxy{J^k:=\langle\pi_0,\dots, \pi_k\rangle: \Jet[\clR,\de] \ar[r] & \prod_{i=0}^k} \clR^{(i)}\]
\end{remark}
The following result was suggested by C. Chavanat in conversation:
\begin{theorem}
	The category $\Jet[\clR,\de]$ is a 2-rig, and it inherits a differential structure from its universal property, defined as `taking the tail of the sequence $\vec X$':
	\[\label{civetta}\de_\infty\big(X\leftarrow \de X \leftarrow \de\de X\leftarrow\cdots\big) := \big(\de X \leftarrow \de\de X\leftarrow\cdots \big)\]
\end{theorem}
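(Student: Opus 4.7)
The plan is to combine the universal property of the cofiltered limit $\Jet[\clR,\de]$ with iterated application of \autoref{anchelui_diffrig}, transferring the 2-rig and differential structures level by level along the tower \eqref{die_torre}.

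First, I iterate \autoref{anchelui_diffrig}: each $\clR^{(n)}$ is a $\bfD$-differential 2-rig with derivation $\de^{(n+1)}$, and each forgetful functor $U^{(n)}:\clR^{(n+1)}\to\clR^{(n)}$ is a strict 2-rig morphism (indeed, a strict morphism of differential 2-rigs). Since the 2-category of $\bfD$-2-rigs is monadic over $\bfCat$ (cf.\ \autoref{doctrines_for_all}), its limits are created by the forgetful functor to $\bfCat$. Hence $\Jet[\clR,\de]$, presented as a cofiltered limit in $\bfCat$, inherits a unique $\bfD$-2-rig structure making every projection $\pi_n:\Jet[\clR,\de]\to\clR^{(n)}$ a strict 2-rig morphism; concretely, the tensor product and the relevant colimits in $\Jet[\clR,\de]$ are computed levelwise.

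Second, the very definition of $\de^{(n+1)}$ as a lift of $\de^{(n)}$ along $U^{(n)}$ gives the strict equation $U^{(n)}\circ\de^{(n+1)}=\de^{(n)}\circ U^{(n)}$; combined with the cone identity $U^{(n)}\circ\pi_{n+1}=\pi_n$, this shows that the composites $\{\de^{(n)}\circ\pi_n\}_{n\ge 0}$ again form a cone over the tower \eqref{die_torre}. The universal property of $\Jet[\clR,\de]$ therefore produces a unique endofunctor $\de_\infty:\Jet[\clR,\de]\to\Jet[\clR,\de]$ characterised by $\pi_n\circ\de_\infty=\de^{(n)}\circ\pi_n$ for every $n$, and unwinding $\pi_n$ on an object $\vec X$ recovers the tail-shift description of \eqref{civetta}.

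Third, to verify that $\de_\infty$ is a derivation on the $\bfD$-2-rig $\Jet[\clR,\de]$, I check linearity and the Leibniz rule componentwise. Each $\de^{(n)}$ is a derivation on $\clR^{(n)}$ (again by \autoref{anchelui_diffrig}), so it preserves $\bfD$-colimits and carries a Leibniz isomorphism at level $n$; because both the tensor and the relevant colimits in $\Jet[\clR,\de]$ are computed levelwise via the $\pi_n$, naturality in $n$ assembles these levelwise data, by the universal property of the limit, into a coproduct decomposition $\de_\infty(\vec X\otimes\vec Y)\cong \de_\infty\vec X\otimes\vec Y+\vec X\otimes\de_\infty\vec Y$ and into preservation of $\bfD$-colimits by $\de_\infty$. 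The only genuinely delicate point is justifying that colimits in $\Jet[\clR,\de]$ may indeed be computed componentwise; this is the step I expect to require the most care, and it follows because each $U^{(n)}$ creates $\bfD$-colimits (a consequence of the linearity of $\de^{(n)}$), so the cofiltered chain of forgetful functors lifts those colimits through the limit, reducing the Leibniz and linearity identities for $\de_\infty$ to bookkeeping on each $\clR^{(n)}$.
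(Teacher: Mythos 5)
Your proposal is correct and follows essentially the same route as the paper: transfer the 2-rig structure to the limit via (2-)monadic creation of limits from $\Cat$, observe that tensor products and $\bfD$-colimits in $\Jet[\clR,\de]$ are computed levelwise because each $U^{(n)}$ creates them, and then verify linearity and the Leibniz isomorphism componentwise using the lifted derivations $\de^{(n)}$. The only cosmetic difference is that you phrase $\de_\infty$ explicitly as the functor induced by the cone $\{\de^{(n)}\circ\pi_n\}$, whereas the paper takes the tail-shift description as given and writes out the levelwise formulas for coproducts and tensors in full.
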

\begin{proof}
	The limit in \Cref{furetto} acquires a natural structure of 2-rig (given the (2-)monadicity that defines a notion of 2-rig, limits are created from $\cate{MCat}$, and subsequently from $\Cat$); we just have to show that $\de_\infty$ as defined in \Cref{civetta} is linear and Leibniz.
	\begin{itemize}
		\item each lifting $\de^{(n)}$ preserves coproducts, that are all computed as in the base $\clR^{(0)}=\clR$. As a consequence, the coproduct of $\vec X=(X,(X;\xi),\dots)$ and $\vec Y=(Y,(Y;\theta),\dots)$ defined as above is
		      \[\left( X+Y
			      , \Big(X+Y; \prevar{\de (X+Y)}{X+Y}\Big)
			      , \Big((X;\xi)+(Y,\theta);\prevar{\de'(X+Y;\#)}{(X+Y;\#)}\Big)
			      , \dots
			      \right).\]
		\item Similarly, the tensor product $\vec X\otimes  \vec Y$ of $\vec X$ and $\vec Y$ in $\Jet[\clR,\de]$ is
		      \[\left( X\otimes Y
			      , (X\otimes Y;\xi\boxtimes\theta)
			      , \Big((X;\xi)\boxtimes(Y,\theta);\prevar{\de'(X\otimes Y;\#)}{(X\otimes Y;\#)}\Big)
			      , \dots
			      \right).\]
		      The functor $\de_\infty$ of \Cref{civetta} acts on this object as follows:
		      \begin{align*}
			      \de_\infty(\vec X\otimes \vec Y) & = \big(\de(X\otimes Y),
			      (\de(X\otimes Y);\de(\xi\boxtimes\theta)),\dots
			      \big)                                                                                               \\
			                                       & \cong \left( \de X\otimes Y
			      , (\de X\otimes Y;\de\xi\boxtimes\theta)
			      , \dots
			      \right) + \left( X\otimes \de Y
			      , (X\otimes \de Y;\xi\boxtimes\de\theta)
			      , \dots
			      \right)                                                                                             \\
			                                       & \cong \de\vec X\otimes \vec Y + \vec X\otimes \de\vec Y.\qedhere
		      \end{align*}
	\end{itemize}
\end{proof}
\subsubsection{Lifting to \textEM algebras}\label{lifta_alle_monadi} Obviously, there exists a (tautological) lifting of $\de$ to a category of Eilenberg-Moore algebras, as the endofunctor $\de$ admits a distributive law with the monads $\de L$ and $R\de$: we expand on this idea in the present section.

To start, recall what it means to lift an endofunctor to an \textEM category of a monad $T$ (dual conditions ensure the lifting to the co\textEM category of a comonad): given $F : \clK\to\clK$ an endofunctor, and $T$ a monad on $\clK$, the following conditions are equivalent:
\begin{enumtag}{la}
	\item there exists a lifting $\hat F : \EM(T)\to\EM(T)$ of $F$ to the \textEM category of $T$;
	\item there exists an endofunctor-to-monad distributive law $\lambda : TF\To FT$, \ie a natural transformation $\lambda$ suitably compatible with the multiplication and unit of $T$.
\end{enumtag}
Now, let $\clR$ be a scopic 2-rig, equipped with a triple of adjoints $L\dashv \de\dashv R$; then the following fact is a general statement about adjoint pairs, applied to $L\dashv \de, \de\dashv R$.
\begin{lemma}\label{biscotto}
	The composite map
	\[\label{papera_1}\vxy{
			\de * (\eta^l \circ\epsilon^l) : \de L \de \ar[r] & \de \ar[r] & \de \de L,
		}\]
	where $L\adjunct{\epsilon^l}{\eta^l} \de$, is an intertwiner of the monad $\de L$ onto itself; similarly, the composite map
	\[\label{papera_2}\vxy{
			(\eta^r \circ \epsilon^r) * \de : \de R \de \ar[r] & \de \ar[r] & R\de\de,
		}\]
	where $\de\adjunct{\epsilon^r}{\eta^r} R$, is an intertwiner of the monad $R\de$ onto itself.
\end{lemma}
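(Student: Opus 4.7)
The plan is to derive both assertions from a single 2-categorical pattern: for any adjunction $F \dashv G$, the composite $\eta \circ \epsilon : FG \To \id \To GF$ is a natural transformation whose interaction with the canonical monad $GF$ is entirely controlled by the triangle identities. The two maps of the lemma are obtained by whiskering this zigzag by $\de$, once for $L \dashv \de$ and once for $\de \dashv R$; each intertwining statement will then reduce to a triangle identity.

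For the first claim I would unwind $\lambda := \de \ast (\eta^l \circ \epsilon^l) : \de L \de \To \de \de L$ to the componentwise formula $\lambda_X = \de \eta^l_X \circ \de \epsilon^l_X$ and verify compatibility with the monad structure $(\de L, \eta^l, \mu^l := \de \epsilon^l L)$. The unit axiom, $\lambda \circ (\eta^l \ast \de) = \de \ast \eta^l$, collapses at once via the triangle identity $\de \epsilon^l \circ \eta^l \de = \id[\de]$. The multiplication axiom produces a pasting diagram whose commutativity follows from the naturality of $\epsilon^l$ together with the other triangle identity $\epsilon^l L \circ L \eta^l = \id[L]$. The second claim is treated symmetrically, via the zigzag $\eta^r \circ \epsilon^r : \de R \To R \de$ whiskered on the right by $\de$; the intertwining conditions with respect to the monad $(R\de, \eta^r, R \epsilon^r \de)$ reduce \emph{mutatis mutandis} to the triangle identities of $\de \dashv R$, with the opposite side of whiskering accounting for the fact that the resulting intertwiner $\lambda' : \de R \de \To R \de \de$ runs in the opposite direction to $\lambda$.

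The main obstacle will be the bookkeeping for the multiplication compatibility, whose pasting diagram has four horizontally composed functors and five whiskerings of unit/counit data; the cleanest presentation is string-diagrammatic, where the axiom becomes visibly an instance of pulling two zigzags straight simultaneously. Since the argument uses nothing beyond the 2-categorical data of the adjoint string $L \dashv \de \dashv R$, I would write out the first case in detail and obtain the second by invoking the duality that swaps the two halves of this string; no scopic-2-rig hypothesis is needed beyond the existence of the adjoints.
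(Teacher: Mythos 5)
Your proposal is correct and coincides with what the paper intends: the paper states \autoref{biscotto} without proof, remarking only that it is ``a general statement about adjoint pairs'' applied to $L\dashv\de$ and $\de\dashv R$, and your explicit verification---whiskering the zigzag $\eta\circ\epsilon$ by $\de$ and checking the unit and multiplication compatibilities via the triangle identities and naturality of the co/unit---is exactly that general statement spelled out. The only caveat worth flagging is the one you already note: the second map $\de R\de\To R\de\de$ is a law of the shape $F T'\To T'F$ rather than $T'F\To FT'$, so it lifts $\de$ to the Kleisli rather than the Eilenberg--Moore side of $R\de$, consistent with the discussion in \autoref{non_lo_era}.
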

\begin{lemma}\label{lupocane}
	The monad $\de L$ on the category of species is commutative.
\end{lemma}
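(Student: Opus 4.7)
The plan is to verify the standard commutativity condition for $T=\de L$: the two composites
\[TA\Day TB \rightrightarrows T(A\Day B),\]
arising from the two orderings of strength application and collapse by $\mu$, must coincide. We exploit the description $\de L F \cong F + L\de F$, obtained by applying the Leibniz rule to $\de(y\Uno\Day F)$ and using $\de y\Uno \cong y\Zero$ (the Day unit). Combinatorially, a $\de L F$-structure on a finite set $S$ is either a bare $F$-structure on $S$ (when the fresh point of $\de$ is the one marked) or a marked element $a\in S$ together with an $F$-structure on $(S\setminus\{a\})^+$. Under this decomposition the unit $\eta_F$ is the first summand inclusion, and the multiplication $\mu_F = \de\epsilon_{LF}$ becomes transparent once the counit $\epsilon$ of $L\dashv \de$ is rewritten, via the canonical isomorphism $L\de G[T]\cong T\times G[T]$ (from the bijection $(T\setminus\{t\})^+\cong T$ sending the fresh point to $t$), as the evident projection $(t,g)\mapsto g$.

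The strengths of $T$ are obtained by composing the Leibniz half-inclusions of $\de$ with the associators and symmetry coming from $L=y\Uno\Day-$. Triple application of the Leibniz rule then yields
\[T(A\Day B) \cong A\Day B \;+\; L\de A\Day B \;+\; A\Day L\de B,\]
while expansion of the tensor product gives
\[TA\Day TB \cong A\Day B \;+\; A\Day L\de B \;+\; L\de A\Day B \;+\; L\de A\Day L\de B.\]
On the first three summands both composites reduce to the identity summand inclusion by naturality of the strengths, so no verification is needed there.

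The substantive content therefore concerns the fourth, `double-pointed' summand $L\de A\Day L\de B$, whose elements are decompositions $S=S_1\sqcup S_2$ with marked $a\in S_1,\, b\in S_2$ and structures $\alpha\in A[S_1],\,\beta\in B[S_2]$. Both composites pass this data through $T^2(A\Day B)$, recording two markings along with appropriate renamings of the Leibniz fresh points, and then collapse via $\mu$, which forgets one marking up to the canonical identification $(S^+\setminus\{x\})^+\cong S^+$. The key observation is that the symmetry of the Day convolution and the explicit counit formula conspire to make both orderings of contraction produce the \emph{same} element of $\de L(A\Day B)$, namely the diagonal image in $L\de A\Day B + A\Day L\de B$ obtained by using one of the two markings to absorb the fresh point and letting the other mark the outer pointing.

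The main obstacle is the fresh-point bookkeeping: one has to track two independent layers of Leibniz renamings through both composites and verify that the $\mu$-collapse produces the same element on the nose. Since $\de L$ is cocontinuous (being a composite of left adjoints) and the Day convolution is separately cocontinuous, it suffices to establish the identity on representables $A=y[m],\,B=y[n]$, where it reduces to a finite diagrammatic check in the symmetric groups $S_{m+n}$.
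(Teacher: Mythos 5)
Your plan follows the same route as the paper's own proof: decompose $T=\de L\cong \id[]+L\de$, expand $TA\Day TB$ into four summands, dispose of the three summands containing a unit component via the strength--unit axioms, and isolate the double-pointed summand $L\de A\Day L\de B$ as the only place where something must be checked. Up to that point the proposal is correct and matches the paper. The gap is that the one step carrying all the content --- that both composites send a double-pointed element to \emph{the same} element of $L\de(A\Day B)$ --- is asserted rather than verified, and the description of the alleged common value (``the diagonal image in $L\de A\Day B + A\Day L\de B$'') is not meaningful: this is a coproduct of species, computed pointwise as a disjoint union, so an element lies in exactly one of the two summands and there is no diagonal to land on.

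When the fresh-point bookkeeping you defer is actually carried out, the two composites disagree. Write the double-pointed datum over $S=S_1\sqcup S_2$ as $(a,\alpha,b,\beta)$ with $a\in S_1$, $\alpha\in A[(S_1\setminus\{a\})^+]$, $b\in S_2$, $\beta\in B[(S_2\setminus\{b\})^+]$. The composite in which the $A$-factor acts first identifies $B$'s fresh point with $a$ and leaves $b$ as the surviving marked point, so its output has its remaining fresh point in the $A$-part and lands in the summand $L\de A\Day B$; the other composite identifies $A$'s fresh point with $b$, keeps $a$ marked, and lands in $A\Day L\de B$. Since $a\neq b$, these are distinct elements lying in distinct summands, for either of the two candidate descriptions of $\mu$. (Concretely, for $A=B=y\Uno$ the two double strengths are maps $4\,y\Due\to 3\,y\Due$ whose restrictions to the fourth summand hit different copies of $y\Due$.) This is precisely the point where the paper's own proof concentrates its effort, reducing everything to a compatibility of the two Leibniz half-strengths $\ell',\ell''$ with the symmetry; your proposal cannot be completed as written --- you would need to exhibit an additional identification equating the two outputs above, or a different choice of strength or multiplication on $\de L$, before the commutativity claim goes through.
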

\begin{proof}
	Let's start equipping $T=\de L$ with a tensorial strength of type
	\[\vxy{
			TA\Day B \ar[r] & T(A\Day B)
		}\]
	which considering the isomorphism $T = \id[] + L\de$ can be obtained from the left strength of $L\de$
	\[\label{stre_1}TA\Day B \cong A\Day B + (L\de A)\Day B \xto{1+{\ell'}} A\Day B + L\de(A\Day B)\]
	Similarly, the right strength of $L\de$ gives
	\[\label{stre_2}A\Day TB \cong A\Day B + A\Day (L\de B) \xto{1+{\ell''}} A\Day B + L\de(A\Day B)\]
	One routinely checks that \Cref{stre_1} and \Cref{stre_2} are compatible with the unit and multiplication of $T$, as determined in \autoref{lumaca}.\ref{fw_3}, and then commutativity of $\de L$ amounts to the commutativity of
	\[\vxy{
		& TX\otimes TY \ar@{->}[ld]_\sim \ar@{->}[rd]^\sim &  \\
		X\otimes TY+L\de X\otimes TY \ar@{->}[d]_{1+\ell'} &  & TX\otimes Y + TX\otimes L\de Y \ar@{->}[d]^{1+\ell''} \\
		X\otimes TY+L\de (X\otimes TY) \ar@{->}[d]_{\ell''+L\de\ell''} &  & TX\otimes Y + L\de (TX\otimes Y) \ar@{->}[d]^{\ell'+L\de \ell'} \\
		T(X\otimes Y) + L\de(T(X\otimes Y)) \ar@{->}[rd] \ar@/_/@{->}[rdd]_{\cop[s]{1_{T(X\otimes Y)}}{\epsilon_{T(X\otimes Y)}}} &  & T(X\otimes Y) + L\de(T(X\otimes Y)) \ar@{->}[ld] \ar@/^/@{->}[ldd]^{\cop[s]{1_{T(X\otimes Y)}}{\epsilon_{T(X\otimes Y)}}} \\
		& TT(X\otimes Y) \ar@{->}[d] &  \\
		& T(X\otimes Y) &
		}\label{passero}\]
	which is a compatibility between the two strengths decomposing the leibnizator, induced by the fact that the two identifications $X\otimes TY+L\de X\otimes TY\cong TX\otimes TY\cong TX\otimes Y + TX\otimes L\de Y$ are obtained from the distributivity isomorphisms and the symmetry of $\Day$, compatible with $\ell',\ell''$ in the sense that
	\[\vxy[@C=2cm]{
		{X\otimes Y+X\otimes L\de Y} \ar[r]^-{sw_{XY}+sw_{X,L\de Y}}\ar[d]_{1+\ell''} & {Y\otimes X + L\de Y\otimes X} \ar[d]^{1+\ell'}\\
		{X\otimes Y+ L\de (X\otimes Y)} \ar[r]_-{sw_{XY} + L\de (sw)} & {Y\otimes X + L\de (Y\otimes X)}
		}
	\]
	which boils down to the equations
	\[
		L\de(sw_{XY})\circ \ell'' = \ell'\circ sw_{X,L\de Y}\qquad
		L\de(sw_{XY})\circ \ell'  = \ell''\circ sw_{X,L\de Y}
	\]
	The composite of the two isomorphisms at the top of \Cref{passero}, using again the identification $T\cong \id[]+L\de$, is the morphism
	\[\notag\small\vxy[@R=4mm]{
			X\otimes TY + L\de X\otimes Y \ar[d]^\wr& TX\otimes Y + TX\otimes L\de Y\ar[d]^\wr\\
			X\otimes (Y+L\de Y) + L\de X \otimes (Y+L\de Y) \ar[d]^\wr& (X+L\de X)\otimes Y + (X+L\de X)\otimes L\de Y\ar[d]^\wr\\
			(X\otimes Y + X\otimes L\de Y) + (L\de X\otimes Y + L\de X\otimes L\de Y) \ar[r]_-\sim & X\otimes Y + L\de X\otimes Y + X\otimes L\de Y + L\de X\otimes L\de Y
		}\]
	obtained swapping and rebracketing the two central terms.
\end{proof}
\begin{remark}
	For a left scopic differential 2-rig equipped with a tensor-hom derivative, as in \autoref{tensor_hom_der}, the following generalization of \autoref{lupocane} above holds: if $LI$ is such that the monoid $[LI,LI]$ is commutative, then the monad $\de L$ is commutative.\footnote{This observations is due to Todd Trimble, who provided a graciously elegant proof using Kelly graphs.}
\end{remark}
\begin{remark}
	\label{non_lo_era}
	This seems to be as far as one can get: with \autoref{biscotto}, one lifts $\de$ to a endofunctor $\hat \de$ of $\EM(\de L)$, a category that however is not monoidal (a monoidal structure would come from an \emph{oplax} monoidal structure on $\de L$); on the other hand, $\Kl(\de L)$ is (symmetric) monoidal thanks to the lax monoidal structure of \Cref{stre_1}, \Cref{stre_2} and \autoref{lupocane}, but a lift of $\de$ to $\Kl(\de L)$ would come from a distributive law $\de\de L \To \de L\de$, in the opposite direction of \Cref{papera_1}.
\end{remark}
\subsection{Differential equations and automatic differential equations}\label{sec_diffeq}
Leveraging on the de\-fi\-ni\-tion of Arbogast algebra given in \ref{arbogast_algebra}, one can devise a notion of \emph{differential equation} in a differential 2-rig.

Let's fix a 2-rig $(\clR,\de)$ for the doctrine of coproducts (generalizing to another doctrine is straightforward). Then, a generic element $D$ of $\Arb[\clR,\de]$ is a finite sum $D=\sum_{i\in I} A_i\otimes \de^{n_i}$ that can be considered as an endofunctor of $\clR$, taking $X$ to $DX=\sum_{i\in I} A_i\otimes (\de^{n_i}X)$.

The definition of differential equation we give is motivated by the fact that, in absence of additive inverses, a reasonable way to attach an `equation to solve' to an endofunctor $F$ of a category $\clC$ is to look for its fixpoints, i.e. for objects $X\in\clC$ equipped with an isomorphism $FX\cong X$. As for the (seemingly arbitrary) choice to consider only \emph{maximal} fixpoints of $F$, \ie \emph{terminal $F$-coalgebras}, the following remark (the proof of which is immediate) shows that initial algebras for elements $D\in\Arb[\clR,\de]$ tend to be trivial.
\begin{definition}[Differential equations in a 2-rig]
	Let $(\clR,\de)$ be a differential 2-rig; if the elements of $\Arb[\clR,\de]$ are regarded as differential operators, a \emph{solution} for a differential equation prescribed by $D\in\Arb[\clR,\de]$ is a terminal $D$-coalgebra.
\end{definition}
\begin{remark}
	If $(\clR,\de)$ is a closed, right-scopic 2-rig, then the initial algebra of an element $D\in\Arb[\clR,\de]$ is the initial object of $\clR$. Indeed, the functor $D = \sum_{i\in I} A_i\otimes\de^{n_i}$ is cocontinuous, having the functor $\prod_{i\in I}R^{n_i}[A_i,\firstblank]$ as right adjoint.
\end{remark}
By contrast, the terminal $D$-coalgebra arises as the limit of the op-chain
\[\vxy{
		1 &\ar[l] D1 &\ar[l] DD1 &\ar[l] \cdots
	}\]
prescribed by Ad\'amek theorem \cite{Adamek1974}, for which even for an `affine' differential operator of the form $A\otimes\de(\firstblank)+B$ the computation yields an object that depends on the iterated derivatives of the terminal object.

In the particular case of species, \autoref{term_in_spc} yields that the terminal object is a fixpoint for $\de$, whence the sequence above reduces to
\[\vxy{
		\fkE &\ar[l] A\otimes \fkE +B &\ar[l] A\otimes \de(A\otimes \fkE +B)+B &\ar[l] \cdots
	}\]
With some patience and an inductive argument, each step of the limit can be reduced to an expression in the iterated derivatives of $A,B$.
\subsection{Differential and co/monadic dynamics: $\Mly(\de,B)$}\label{diff_comon_dyn}
Besides monoidal automata in the category $\Mly[\Spc](L,B) = \Mly[\Spc]^\otimes(y\Uno,B)$, 
one can exploit the other adjunction $\de\dashv R$ in which $\de$ sits, and this leads naturally to the study of categories $\Mly[\Spc](\de,B)$ of \emph{differential automata}, where dynamics are induced by the subsequent derivatives of a state object $E,\de E,\dots,\de^n E = E^{(n)},\dots$

Then, from the triple of adjoints $L\dashv\de\dashv R$, a `monad-comonad' and `comonad-monad' adjunction $L\de\dashv R\de$ and $\de L\dashv \de R$ arises.

One can then put the categories $\Mly[\Spc](L\de,B)$ and $\Mly[\Spc](\de L,B)$ under the spotlight using the language of \Cref{sec_abs_auto_in_spc}.	This is what we do in \Cref{LD_automata} below after we address the problem in more generality.

We want to study categories $\Mly(T,B)$ of $(T\dashv S)$-automata where $T$ is a left adjoint monad, and dually, categories $\Mly(Q,B)$ of $(Q\dashv R)$-automata where $Q$ is a left adjoint comonad.

In the case of a left adjoint monad, several technical results can be used to make the description of the categories $\Mly(T,B)$ easier:
\begin{itemize}
	\item \cite[4.3.2]{Bor2} if $T$ is a left adjoint monad, with $S$ as right adjoint comonad, its \textEM category $\clK^T$ is cocomplete, with colimits preserved by the forgetful functor; in fact more is true:
	\item \cite[4.4.6]{Bor2} if $T$ is a left adjoint monad, with $S$ as right adjoint comonad, colimits in $\clK^T$ are \emph{created} by $U$, which in fact is comonadic and $\clK^T$ identifies with the co\textEM category of $S$.
\end{itemize}

The first general observation is completely elementary but already useful: considering that co/monads admit co/unit natural transformations to/from the identity functor, and given the functoriality of $\Mly(-,B)$, we get canonical choices of functors
\[\vxy{
		\Mly(\id,B) \ar[r] & \Mly(Q,B) & \Mly(T,B) \ar[r] & \Mly(\id,B)
	}\]
One can immediately prove from the description of $\Mly(\id,B)$ as a pullback in \Cref{eq_mly_n_mre} that
\begin{remark}
	The category $\Mly(\id,B)$ is the category of coalgebras for the functor $\firstblank\times B$.
\end{remark}
\begin{definition}[Bar and cobar Mealy complexes]\label{bar_n_cobar_cplx}
	Arguing again by (contravariant) functoriality, the monad structure on the functor $T$ specifying the dynamics yields an augmented cosimplicial object
	\[\label{bar}
		\small\vxy{
			\Mly(\bsT,B)_\bullet=\Bigg( \Mly(\id,B) &
			\Mly(T,B) \ar[l]_-{\eta^*}\ar[r]|-{\mu^*}&
			\Mly(T^2,B)\pair{(T\eta)^*}{(\eta T)^*}[l] \pair{}{} & \dots \Bigg)
			\ar[l]\ar@<1em>[l]\ar@<-1em>[l]
		}
	\]
	obtained feeding the bar resolution of $T$ to the functor $\Mly(-,B)$, \cite{goerss-jardine}, \cite[8.6]{Weibel1994}.

	Dually, the cobar resolution of a left adjoint comonad $Q$ yields an augmented simplicial object
	\[\label{cobar}\small\vxy{
			\Mly(\bsQ,B)_\bullet=\Bigg(\dots \pair{}{}&
			\Mly(Q^2,B) \ar[r]|-{\sigma^*} \ar[l]\ar@<1em>[l]\ar@<-1em>[l] &
			\Mly(Q,B) \pair{(\epsilon Q)^*}{(Q\epsilon)^*}[l] &
			\Mly(\id,B) \ar[l]^{\epsilon^*}\Bigg)
		}\]
	%
	We refer to these as the \emph{bar complex} of $T$-automata and the \emph{cobar complex} of $Q$-automata.
\end{definition}
\begin{remark}
	Some intuition on \autoref{bar_n_cobar_cplx} is due. Let $k$ be a field and $A$ be a $k$-algebra, or more generally let $A$ an internal monoid in a monoidal category $(\clK,\otimes,I)$; the bar resolution of the monad $T_A=A\otimes_k\firstblank$ is then useful to compute the Hochschild cohomology of $A$, as the bar complex of $T_A$ is its free resolution as $A$-$A$-bimodule. One generalizes this to an arbitrary monad in the fashion of \cite{Duskin1975,BarrBeck1969} and gets the bar resolution as a `thickening' of $T$ into a simplicial object.
%
\end{remark}
\color{black}
\begin{remark}
	Let $\clK$ be locally presentable. Given that $\mu^* : \Mly(T,B)\to\Mly(T^2,B)$ acts by precomposition with $\mu$, sending $\mlyob Eds$ to $\mlyob E{d\mu_E}{s\mu_E}$ a swift application of the adjoint functor theorem yields a right adjoint $\mu_*$ to $\mu^*$.
\end{remark}
\begin{remark}[On monadic automata]\label{EM_macchine}
	It is reasonable to describe \emph{\textEM}\footnote{The `Moore$_1$' of `Moore automaton' and the Moore$_2$ of `Eilenberg--Moore' are two different people; the notion of `Eilenberg--Moore$_2$ Moore$_1$ automaton' makes perfect sense as a category $\mu\Mre(T,B) := \clK^T\times_\clK\clK/B$. However, we leave Eilenberg--Moore$_2$ Moore$_1$ automata out of this note.} Mealy automata, refining the pullbacks in \autoref{mly_n_mre} by using the forgetful from $\clK^T$ (the Eilenberg--Moore category of $T$) instead of $\Alg(T)$, and obtaining categories $\mu\Mly(T,B)$ and $\mu\Mre(T,B)$ of monadic Mealy and monadic Moore automata; in this case, some of the observations listed here carry over:
	\begin{itemize}
		\item $\mu\Mly(\id,B)$ is just the slice $\clK/B$, so the free-forgetful adjunction $F^T:\clK\rightleftarrows\clK^T : U^T$ induces a `pulled-back' adjunction $\mu\Mly(T,B)\rightleftarrows T/B$.
		\item Let $S,T$ be monads on $\clK$. Whenever a morphism of monads $\lambda : T\To S$ in the sense of \cite[§6.1]{barr+wells1985} is given, the induced (colimit-preserving) functor $\clK^S\to\clK^T$ (cf.~[\emph{ibi}, Thm. 6.3]) induces in turn a (colimit-preserving) functor $\mu\Mly(S,B)\to\mu\Mly(T,B)$.
	\end{itemize}
\end{remark}
\begin{remark}\label{unrewarding}
	Working in the more restrictive case of \textEM automata is, however, rather unrewarding for a variety of reasons: first of all, there is the trivial remark that as soon as a carrier $E$ has a structure $a : TE\to E$ of $T$-algebra, its `dynamics' is pretty trivial, as $a$ must be a split epi with a privileged right inverse $\eta_E$; thus, the composition $s\circ \eta_E$ `knows everything' about the evolution of $\mlyob Eds$. Second, the conditions for a natural transformation to induce functors between \textEM categories are fairly more imposing, and third, the morphisms inducing an analogue of \Cref{bar},\Cref{cobar} are simply not available.

	Something can be said, however, if we work `interfiber' using \autoref{glob_Mly_cat}. A monad morphism in the sense of \cite{Street1972} induces a monad $\hat S$ on $\clK^T$ so that the forgetful $U^T : \clK^T\to\clK$ is an intertwiner, hence leveraging on \autoref{glob_Mly_cat} we can induce a functor
	\[\vxy{\Mly[\clK^T](\hat S,(B,b)) \ar[r] & \Mly(S,B).}\]
\end{remark}
Dually, one can try to render the free functor $F_T : \clK \to \clK_T$ into the Kleisli category of $T$ strong monoidal for a monoidal structure on $\clK_T$; this will yield functors $\Mly(S,B) \to \Mly[\clK_T](\check S,F_TB)$. The matter is investigated in the second part of \cite{Guitart1980} when $F=A\otimes\firstblank$. For example, consider $\clK$ monoidal and with countable sums preserved by the tensor; then, every oplax monoidal monad $T : \clK\to\clK$ lifts a monoidal structure on $\clK_T$ and one can then consider $\clK_T$-valued $F$-machines, cf.~\cite[Prop. 30]{Guitart1980}.
\begin{remark}[On the proper choice of output objects]
	The construction of \autoref{mly_n_mre} depends not only on $F$, but also on an output object $B$, usually thought as a `space of responses' the machine $\mlyob Eds$ can give as output. The choice of what $B$ best models a given problem has to be made each time according to the nature of the problem itself. However, one is almost always led to consider choices of $B$ that are `spaces of truth values', like a Heyting or Boole algebra, or spaces of probabilities, like the closed unit interval $[0,1]$. The co/completeness of $\Mly(F,B)$ and $\Mre(F,B)$ established in \autoref{remark_on_compl_1} entails that all algebraic structures (=all essentially algebraic theories) can be interpreted in such categories, and the nature of $\Spc$ as a presheaf topos entails that the construction of an object of internal real numbers is more or less straightforward. In particular,
	\begin{itemize}
		\item Hadamard Heyting/Boole algebra objects are just species $B : \sfP \to\Set$ which factor through the subcategory $\mathsf{Heyt}$ or $\mathsf{Bool}$, the simplest case being the constant species $\fkB$ at the booleans $\bfB=\{0<1\}$, with trivial action of each $S_n$ ($\fkB$ is the subobject classifier of $\Spc$; another example of a Boolean algebra object in $\Spc$ is the species $\wp$ of subsets of \autoref{exam_species}.\ref{es_1});
		\item regarding $\Spc$ as a presheaf topos, it is easy to determine that the NNO, the object of integers, and of rationals, and of internal Dedekind reals \cite[§VI.1]{mac1992sheaves} can be constructed as constant functors $c_\bbN,c_\bbZ,c_\bbQ,c_\bbR$ at natural, integers, rationals and reals in $\Set$.
	\end{itemize}
\end{remark}

\section{The fourfold way}\label{LD_automata}
The `fourfold way' is the study of the categories $\Mly(L\de,B)$ (in relation with the right adjoint $R\de$ of the dynamics) and $\Mly(\de L,B)$ (in relation with the right adjoint $\de R$). The four functors $L\de,R\de,\de L,\de R$ relate to each other and admit explicit descriptions giving rise to a rich theory of $\de L$- and $L\de$-algebras serving to study the indexed categories $\Mly[\Spc](L\de),\Mly[\Spc](\de L)$.
\begin{remark}[On the structure of $L\de$ and $\de L$]\label{on_DL_and_LD}
	Rajan \cite{rajan1993derivatives} provides explicit formulas for the monads and comonads associated to $L\dashv\de\dashv R$. Let $\fkF  : \sfP \to \Set$ be a species. Then,
	\begin{itemize}
		\item $L\de\fkF $ acts as $y\Uno\Day \de \fkF $; a structure of type $L\de \fkF $ on a finite set $A$ chooses a point of $A$, and an $\fkF $-structure on the complement of that point.
		\item $R\de\fkF $ acts as $A\mapsto \prod_{a\in A}\fkF [(A\smallsetminus\{a\})\sqcup\{\bullet\}]$, i.e. as $A\mapsto (\fkF A)^A$; a structure of type $R\de \fkF $ on a finite set $A$ chooses an $\fkF $-structure on $A$ for every $a\in A$. With a similar reasoning,
		\item $\de L\fkF  = \de(y\Uno\Day \fkF )$ is the functor $\fkF + L\de \fkF$.\footnote{This gives rise to the evocative formula: $[\de,L]=\de L-L\de =1$, i.e. to the canonical commutation relation between position and momentum (up to a sign); in the language of \emph{virtual species} \cite{joyal1985calcul,joyal1985regle,yeh1985combinatorial,412e8ae396fc12c5284630b51a24e521afbc5fc7} and \cite[§2.5]{bergeron1998combinatorial} such an equation can be made completely formal. As for its meaning, \emph{hanc marginis exiguitas non caperet}, but see \autoref{prob_1} below.}
		\item $\de R\fkF$ acts as $A\mapsto \fkF[A]^A \times \fkF[A]=R\de \fkF[A]\times \fkF[A]$.
	\end{itemize}
\end{remark}
\begin{remark}\label{lumaca}
	The structures of each of these four functors intervene in defining the monad structures on $\de L$ and $R\de$ and the comonad structures on $L\de$ and $\de R$:
	\begin{enumtag}{fw}
		\item\label{fw_1} The comultiplication of the comonad $L\de$ has a particularly simple form, being obtained from the coproduct injection
		\begin{align}
			L\de\fkF & \to L\de \fkF + LL\de\de\fkF \notag \\
			         & \cong L(\de\fkF+L\de\de\fkF)\notag  \\
			         & \cong L\de L\de\fkF.
		\end{align}
		\item\label{fw_2} the unit of the monad $R\de$ is a natural transformation with components $\fkF A\to (\fkF A)^A$; this can be taken to be the constant map, \ie the mate of the first projection $\fkF A\times A \to \fkF A$; the multiplication is instead obtained from the (mate of the) counit $\epsilon = \pi_2 : R\de\fkF A \times \fkF A \to \fkF A$ of $\de\dashv R$.
		\item\label{fw_3} The unit of the monad $\de L$ is the first coproduct injection, and the multiplication is induced as
		\[\vxy{
			\de L(\fkF + L\de \fkF)\ar[r]^-\cong & \de L\fkF + \de LL\de \fkF \ar[r]^-{\cop[s] 1{\de L \epsilon}} & \de L\fkF
			}\]
		if $\epsilon$ is the counit of $L\dashv \de$. We observed in \autoref{lupocane} that this monad is commutative.
		\item\label{fw_4} To conlude, the comultiplication of the comonad $\de R$ is obtained from the unit of $\de \dashv R$ as the map with components
		\[\vxy[@R=0mm]{
			\de R \fkF A \ar[r] & \de((R\fkF)^{\id[]})A \\
			R\fkF A \ar[r]_-{\langle\eta_{A+1},1\rangle}& R\fkF[A+1]^A\times R\fkF[A+1],
			}\]
		denoting $R\fkF^{\id[]}$ the functor $A\mapsto (R\fkF A)^A$.
	\end{enumtag}
\end{remark}
The discussion in \autoref{mangusta} yields restrictive assumptions on when a differential 2-rig $(\clR,\de)$, such that $\de$ is a right adjoint with left adjoint $L$, gives rise to a derivation $L\de$.

Recall that the differential operator $\Upsilon = \sum_{i=1}^n x_i\frac{\partial}{\partial x_i}$ in $\bbR^n$ is called `Euler homogeneity operator', cf.~\cite[p. 296]{gelfand_shilov_1968}; another name for the same operation, `numbering derivation', comes from Physics where if $X^n$ represents something like a state of $n$ bosons, like photons in a laser, then the differential operator $X \cdot D$ takes $X^n$ to $n X^n$, where the coefficient `counts' or `numbers' the of bosons.

This leads to the following definition:
\begin{remark}[The Euler derivation on $\Spc$]\label{Euler_der}
	The functor $L\de = y\Uno\Day \de : \Spc\to\Spc$ of \autoref{lumaca}.\ref{fw_1} is a derivation, and furthermore a left adjoint (with right adjoint $R\de$), hence $(\Spc,\Day,L\de)$ is a differential 2-rig for the doctrine of all colimits.
\end{remark}
Armed with the explicit descriptions in \autoref{lumaca}, we can attempt to unveil the structure of the categories $\Alg(L\de)$, $\Alg(\de L)$, as building blocks for the category $\Mly[\Spc]({L\de},B)$, $\Mly[\Spc]({\de L},B)$.
A thorough analysis of co/algebra structures for such interesting endofunctors of $\Spc$ seems to be missing from the existing literature. Rajan \cite{rajan1993derivatives} goes as close as determining in painstaking detail the monad and comonad structures on $\de L,\de R,L\de,R\de$, but doesn't seem to provide a characterization for their endofunctor or \textEM algebras, or even for the (much easier, and somewhat more inspiring) bare endofunctor algebras.
As one would expect from the adjunction relations $L\de\dashv R\de$ and $\de L\dashv \de R$ the structures of $L\de$-algebras (=$R\de$-coalgebras) and $\de L$-algebras (=$\de R$-coalgebras) are tightly related.
The following computations all follow a general argument, given \autoref{on_DL_and_LD} a $\de L$-algebra structure on a species $F$ consists of a pair $\cop[s] uv : F + L\de F \to F$ of maps $u : F \to F$ and $v : \de F \to \de F$ of endomorphisms, one for $F$ and one for $\de F$.
\begin{example}
	A $\de L$-algebra structure on the exponential species $E$ reduces to a pair $u:E\to E$ and $v:LE\to E$, which in turn reduces to another endomap of $E$, given how $E$ is a Napier object. Then, $\de L$-algebra structures on $E$ are representations of the free monoid $\bbN\langle d,c\rangle$ (cf.~\cite{guitart2014I,guitart2014II,guitart2017III}) on 2 generators $d,c$ over the set $E\Uno$ (because endomaps of $E$ are in bijection with elements of $E\Uno$, by Yoneda). For set species, this must be trivial, for linear species this amounts to a `character' for the monoid representation $\bbN\langle d,c\rangle$.
\end{example}
\begin{example}
	For the species $\Lin$ of linear orders, a $\de L$-algebra map is a map $L\otimes L \to L$, since
	\[\Lin + L\de (\Lin) = \Lin + y\Uno \Day \Lin \Day \Lin\]
	but then $\Lin+y\Uno \Day \Lin \Day \Lin=\Lin \Day(1+y\Uno\Day\Lin)$, and the fact that $1+y\Uno\Day\Lin\cong\Lin$ is exactly the universal property satisfied by $\Lin$ as initial algebra of $1+y\Uno\Day\firstblank$.
\end{example}
\begin{example}
	A similar line of reasoning leads to the characterization of $\de L$-algebra structures on the species of cycles, \autoref{exam_species}.\ref{es_4}: since $\de \Cyc\cong\Lin$, structures of $\de L$-algebras are pairs, $\Cyc \to \Cyc$ and $\Lin \to \Lin$ of endomorphisms.
\end{example}
\begin{example}
	For the species $\fkS$ of permutations of \autoref{exam_species}, a $\de L$-algebra structure consists of a pair $\cop[s] uv :\fkS+L\de\fkS\to\fkS$, where $v$ can in turn be simplified into $\fkS\Day(1+y\Uno\Day\Lin)\cong\fkS\Day\Lin$ using \autoref{ex_alg_4}.
\end{example}

\section{Other flavour of species}\label{other_flavs}
The purpose of this section is to schematically extend the results of the paper (with particular attention to \autoref{its_diffe}, \autoref{sec_diffeq}), to categories that arise as generalisations of $\Spc$ or that exhibit similar properties than the ones of $\Spc$.

For example, when the set $S$ in \autoref{spc_n_Vspc} has more than one element, we get \emph{coloured} species \cite[§1.1]{Joyal1986foncteurs}, \cite[2.1.4, (6)]{7a8211434eed2811547338107aa8e1aa26e0ff5f}; when instead of bijective functions of finite sets we take \emph{injective} functions $\Inj$, the presheaf category $[\Inj^\op,\Set]$ is species-like; it's the category of nominal sets of \cite{Pitts2013}, also known as the \emph{Schanuel topos}; when instead of finite sets and bijections we consider finite \emph{ordinals} and monotone bijections we get a very rigid domain category for linearly ordered species \cite{10.1007/BFb0072518}\dots
\subsection{Coloured species}\label{colo_spc}
The free symmetric monoidal category on a set $S$ (regarded as a discrete category), as defined in \autoref{spc_n_Vspc}, admits an explicit description in the following terms.

Think of $S$ as a set of \emph{colours} (the terminology comes from operad theory, cf.~\cite{Yau2016_jn}); the category $\sfP[S]$ has objects the finite sets $[n] := \{1,\dots,n\}$ equipped with a function $c : [n] \to S$ called a \emph{coloration} or \emph{colouring function}, and morphisms the bijections $\sigma : [n]\to [n]$ `compatible with the coloration', in the sense that they induce an indexed family of bijection $\sigma_s : [n]_s \to [n]_s$ among the fibers $[n]_s = c^{-1}s$. In other words, $\sfP[S]$ is the comma category
\[\vxy{
		\sfP[S] \ar[r]\ar[d]& 1 \ar[d]^S\\
		\sfP \ar[r]& \Set\ultwocell<\omit>{}
	}\]
Presented in this way, $\sfP[S]$ is a \emph{coloured PROP}: colourings can be tensored using the universal property of sums of finite sets:
\[([n],c)\oplus([m],d) = \left([n+m], \cop[s]cd\right)\]
where $\cop[s]cd : i\mapsto [i\le n] \mathbin{\texttt{?}} c(i) \mathbin{\texttt{:}} d(i)$.

As a consequence, the presheaf category of $(S,\Set)$-species (or $S$-coloured species, or $S$-species for short) of \autoref{spc_n_Vspc} acquires a Day convolution monoidal structure.

An explicit description of the (binary, and $n$-fold by induction) convolution of $S$-species $M,N : \sfP[S] \to \Set$ is needed in order to equip $S\emdash\Spc$ with a plethystic substitution structure; this is given in \cite[2.1]{7a8211434eed2811547338107aa8e1aa26e0ff5f}: the coend that defines the convolution splits as the sum
\[\label{chic_fact} M\Day N : ([n],c) \mapsto \sum_{p,q\vdash n} M([p],c)\times N([q],d)\]
where $p,q\vdash n$ denotes the set of decompositions $([p],c),([q],d) \in \sfP[S]$ of $[n]$, \ie the pairs of objects of $\sfP[S]$ such that $([n],c)=\cop[s]pq : [p+q] \to S$.
\begin{remark}\label{decompo}
	Note how this decomposition is possible as a consequence of the fact that $(S,\Set)\emdash\Spc$ splits as a product of groupoids $\sfP[S]\equiv \prod_{s\in S}\sfP$ (in a similar fashion $\sfP$ splits as a product of groups in \Cref{repre_repre}).
\end{remark}

In order to motivate the definition of substitution product, let's review the analogue operation for formal power series. Let $I,J$ be two sets, considered as sets of indeterminates; fix a ring of coefficients $k$, and let $f\in k[J]$, and $\vec g  = \{g_j\in k[I]\mid j\in J\}$ denote, respectively, a polynomial in the indeterminates of the set $J$, and a family of $|J|$ polynomials in the indeterminates $I$.

For all such $f,\vec g$ one defines the \emph{substitution} of $\vec g$ into $f$ as the image of $f$ under the ring homomorphism $k[J] \to k[I]$ uniquely determined by $j\mapsto g_j$: then one has
\[f\lhd \vec g = \sum_{\vec n\in \bbN^J}\vec g^{\,\vec n}\]
where $\vec g^{\,\vec n} := \prod_{j\in J}g_j^{n_j}$ (the notation is motivated by the fact that $c : [n]\to J$ equals $\big[\sum_{j\in J} n_j\big]\to J$). Similarly, substitution of a $J$-family of $S$-species $\vec M = \{M_j\mid j\in J\}$ in a $J$-species $N$ is defined thinking of $\vec M$ as a functor $J \to S\emdash\Spc$ which, by virtue of the universal property of $\sfP[J]$ extends to a unique strong monoidal functor $\vec M^\star : (\sfP[J],\oplus)\to (S\emdash\Spc,\Day)$. This functor sends an object $\vec w = j_1\dots j_n$ to the convolution of the tuple $M_{j_1},\dots, M_{j_n}$; denote $\vec M^*_{\vec w}$ the action of this extension.

Now, the substitution of a $J$-family of $S$-species $\vec M$ into an $S$-species $N$ is defined as
\[\label{color_sub}N\lhd \vec M := \vec u \mapsto \int^{\vec w\in \sfP[J]}N[\vec w]\times \vec M^*_{\vec w}[\vec u]\]
Given the structure decomposition of $\sfP[J]$, one sees that the coend in question splits into the sum of coends
\[\sum_{r\ge 0}\int^{(j_1,\dots,j_r)\in\sfP[J]} N[(j_1,\dots, j_r)]\times \vec M^*_{(j_1,\dots,j_r)}[\vec u]\]
where $\vec w = (j_1,\dots, j_r)$ is a generic object of $\sfP[J]$.

Once the notation that is required for the proof is settled, the following result boils down to a painstakingly long coend computation:
\begin{proposition}
	The substitution product in \Cref{color_sub} defines a bifunctor
	\[\vxy{J\emdash\Spc\times (S\emdash\Spc)^J \ar[r] & S\emdash\Spc}\]
	which is `associative and unital' in the sense that the (relatively obvious) diagrams must commute. Unwinding the conditions prescribed by these diagrams yields that:
	\begin{itemize}
		\item given a single $K$-species $F$, a $K$-family of $J$ species $\vec G = \{G_k\mid k\in K\}$, and a $J$-family of $S$-species $\vec H = \{H_j\mid j\in J\}$, then
		      \[F\lhd (\vec G\lhd \vec H)\cong (F\lhd\vec G)\lhd \vec H\]
		      where on the LHS we define $\vec G\lhd \vec H$ as the $J$-family of species $\{G_k\lhd \vec H\mid k\in K\}$;
		\item there exist a $J$-family of $J$-species $\vec y_r$ acting as right unit for $\lhd$, \ie such that $N\lhd \vec y\cong N$, naturally in the $J$-species $N$, and for every $j\in J$ there exists a $J$-species $y_j$ such that $y_j\lhd\vec M\cong M_j\in \sfP[S]$ for every $J$-family of $S$-species.
	\end{itemize}
\end{proposition}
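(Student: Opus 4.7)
The plan is to reduce all three claims (bifunctoriality, unitality, associativity) to standard coend manipulations, leveraging two simplifications: the splitting $\sfP[J]\simeq\coprod_{r\ge 0}\sfP[J]_r$ of \autoref{decompo} turning every coend on $\sfP[J]$ into a sum of coends on finite products of groupoids, and the universal property of $\sfP[J]$ as the free symmetric strict monoidal category on $J$, which makes the extension $\vec M\mapsto \vec M^\star$ itself a functor. Bifunctoriality is then immediate, as morphisms $N\to N'$ and $\vec M\to\vec M'$ induce a natural transformation of the coend integrand in \Cref{color_sub}, and coends are functorial in parameters.

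For the right unit, I would take $y_j\in J\emdash\Spc$ to be the representable on the length-one tuple $(j)\in\sfP[J]$, and set $\vec y_r := \{y_j\mid j\in J\}$. The coend defining $y_j\lhd\vec M$ is then concentrated on $(j)$ and collapses by (co-)Yoneda to $\vec M^\star_{(j)}=M_j$, giving the second half of the unit axiom. Assembling these, the extension $\vec y_r^\star_{\vec w}$ is forced to be the representable on $\vec w$, since a strong monoidal functor between free symmetric monoidal categories sends $\oplus$-generators to $\Day$-generators; a second Yoneda application then produces the natural isomorphism $N\lhd\vec y_r\cong N$.

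For associativity, the conceptual strategy is to regard an $S$-species $F$ as a (coloured) analytic functor $\widehat F : \prod_{j\in J}(S\emdash\Spc)\to S\emdash\Spc$ via $\widehat F(\vec M):=F\lhd\vec M$, generalising \autoref{up_spc_2}; the key lemma to establish is that $\widehat{F\lhd \vec G}\cong \widehat F\circ\widehat{\vec G}$ as functors, after which associativity is literal composition associativity in $\bfCat$. This lemma reduces, via Fubini for coends applied to the double coend appearing on both sides of $F\lhd(\vec G\lhd \vec H)\cong (F\lhd \vec G)\lhd \vec H$, to identifying $(\vec G\lhd \vec H)^\star_{\vec w}$ with $\vec H^\star(\vec G^\star(\vec w))$; since both expressions are strong monoidal in $\vec w$ and agree on the $\oplus$-generators $j\in J$ by the unit computation above, the identification is uniquely forced. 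The main obstacle I expect is notational rather than conceptual: keeping three colour sets $K,J,S$ distinct throughout, and tracking how each tuple $\vec w=(j_1,\dots,j_r)\in \sfP[J]$ interacts with $\vec G^\star_{\vec w}$ and then with substitution into $\vec H$. The cleanest way to avoid a combinatorial morass is to never write out the full coend formulas, but rather to verify the claims on $\oplus$-generators of each $\sfP[-]$ and extend by freeness; equivalently, to work throughout with the analytic-functor presentation above, where all statements are either tautologies or direct applications of the Yoneda lemma.
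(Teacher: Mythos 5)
Your proposal is correct, and it takes a genuinely more structured route than the paper, which offers no proof at all beyond the remark that the result ``boils down to a painstakingly long coend computation'' once the notation is settled. Where the paper envisages grinding through the double coends directly, you factor everything through the universal property of $\sfP[J]$ as the free symmetric monoidal category on $J$ and of $J\emdash\Spc$ as its free monoidal cocompletion: the operation $\firstblank\lhd\vec H$ is then the Yoneda (left Kan) extension of the strong monoidal functor $\vec H^\star$, unitality is co-Yoneda applied to the representables $y_j$ on the length-one tuples $(j)$, and associativity becomes composition of the associated analytic functors. This buys conceptual clarity and reusability (the same argument works verbatim for the other species-like categories of the paper), at the cost of having to establish the one lemma where the actual content lives: that $\firstblank\lhd\vec H$ is \emph{strong} monoidal, i.e.\ $(G\Day G')\lhd\vec H\cong(G\lhd\vec H)\Day(G'\lhd\vec H)$. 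Your closing remark that agreement on $\oplus$-generators ``uniquely forces'' the identification is slightly too quick as stated, since uniqueness-from-generators only applies once both sides are known to be strong monoidal in $\vec w$; for $(\vec G\lhd\vec H)^\star$ this holds by construction, but for $\vec w\mapsto(\vec G^\star_{\vec w})\lhd\vec H$ it is precisely the Fubini-plus-cocontinuity-of-$\times$ computation (using $\vec H^\star_{\vec w\oplus\vec w'}\cong\vec H^\star_{\vec w}\Day\vec H^\star_{\vec w'}$ and the splitting of \autoref{decompo}) that the paper's ``painstaking'' proof would carry out. Since you do flag this reduction explicitly, the argument is complete in outline; I would only ask you to make that one coend manipulation explicit rather than appealing to freeness for it.
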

\begin{theorem}
	The category of $S$-species admits an Hadamard product (given by the Cartesian structure on $\Cat(\sfP[S],\Set)$) and a Cauchy product (Day convolution) given by \Cref{chic_fact} above. Moreover, it admits \emph{partial derivative} functors $\frac\de{\de s}$ that `derive along a colour' $s\in S$, and satisfy the commutation rule $\frac \de{\de s}\frac \de{\de t}\cong \frac \de{\de t}\frac \de{\de s}$ for each $s,t\in S$.
\end{theorem}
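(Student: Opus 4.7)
The plan is to proceed by transferring the uncoloured proof strategy to the coloured setting, exploiting the explicit description of $\sfP[S]$ given immediately before the theorem statement.

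First I would dispose of the Hadamard and Cauchy products. The Hadamard product is automatic since $S\emdash\Spc=[\sfP[S],\Set]$ is a presheaf topos, and so Cartesian structures exist pointwise. For the Cauchy product one applies the general Day-convolution machinery of \autoref{various_mono}.\ref{ms_2} to the symmetric monoidal groupoid $(\sfP[S],\oplus)$ with tensor $([n],c)\oplus([m],d):=([n+m],\cop[s]cd)$; since $\Set$ is a B\'enabou cosmos, this produces a closed symmetric monoidal structure on $S\emdash\Spc$, and the decomposition of \autoref{decompo} together with Fubini for coends specialises its defining coend to the explicit formula of \Cref{chic_fact}.

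Next I would define $\frac{\de}{\de s}$ as precomposition with a shift functor $\Delta_s:\sfP[S]\to\sfP[S]$ sending $([n],c)\mapsto([n+1],c\ast s)$, where $c\ast s$ extends $c$ by assigning the colour $s$ to the new point, and sending a colour-preserving bijection $\sigma$ to $\sigma\sqcup\id[{\{n+1\}}]$. Mimicking \autoref{flesh_deriv}, the functor $\Delta_s^*$ automatically sits in a triple of adjoints $L_s\dashv\frac{\de}{\de s}\dashv R_s$ by left and right Kan extension along $\Delta_s$ (which exists because $S\emdash\Spc$ is a presheaf category, hence bicomplete). Linearity is immediate, and the coloured Leibniz rule $\frac{\de}{\de s}(F\Day G)\cong\frac{\de}{\de s}F\Day G+F\Day\frac{\de}{\de s}G$ is obtained by splitting the coend \Cref{chic_fact} for $(F\Day G)([n+1],c\ast s)$ along the two mutually exclusive ways in which a decomposition $([n+1],c\ast s)=([p],c_1)\oplus([q],c_2)$ can place the distinguished, newly added $s$-coloured point — either in the first summand or in the second.

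Finally, the commutation $\frac{\de}{\de s}\frac{\de}{\de t}\cong\frac{\de}{\de t}\frac{\de}{\de s}$ reduces to exhibiting a natural isomorphism $\Delta_s\circ\Delta_t\cong\Delta_t\circ\Delta_s$ in $\Cat(\sfP[S],\sfP[S])$. The transposition $\tau\in S_{n+2}$ swapping the last two indices is a colour-preserving bijection from $([n+2],(c\ast t)\ast s)$ to $([n+2],(c\ast s)\ast t)$, hence a morphism in $\sfP[S]$; these components assemble, by the symmetry of $\oplus$, into the required natural isomorphism of shift functors, and precomposition turns it into the desired isomorphism of partial derivatives. The main (minor) obstacle is bookkeeping the Leibniz decomposition in the coloured coend: one must check that the two strengths of $\frac{\de}{\de s}$ are jointly exhaustive and mutually exclusive over the distinguished colour, but this is a direct rewriting of the analogous argument for $\Spc$, since the new point's colour is fixed throughout.
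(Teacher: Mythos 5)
Your proposal is correct, but it takes a genuinely different route from the paper's. The paper defines $\frac{\de}{\de s}$ by exploiting the splitting $\sfP[S]\cong\prod_{s\in S}\sfP$ of \autoref{decompo}: after the exponential transpose $\Cat(\sfP[\{s\}]\times\sfP[\{t\}],\Set)\cong\Spc^{\sfP[\{t\}]}$, the partial derivative along $s$ is simply postcomposition $\de_*$ with the ordinary derivative of $\Spc$, so the commutation square decomposes into evidently commutative sub-squares (the two operators act on disjoint factors of the product of groupoids), and the relevant properties of $\de$ on $\Spc$ are imported wholesale. You instead define $\frac{\de}{\de s}$ intrinsically, as precomposition with the coloured shift $\Delta_s:([n],c)\mapsto([n+1],c\ast s)$ in the spirit of \autoref{flesh_deriv}, and prove commutation by exhibiting a natural isomorphism $\Delta_s\circ\Delta_t\cong\Delta_t\circ\Delta_s$ whose components are the colour-compatible transpositions of the two adjoined points. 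Both arguments are sound and produce the same functors under the identification of \autoref{decompo}. Your version has the advantage of being manifestly uniform in $S$ (no reduction to the two-colour case followed by an ``obvious'' extension), of producing the adjoint triple $L_s\dashv\frac{\de}{\de s}\dashv R_s$ immediately by Kan extension along $\Delta_s$ (which the discussion following the theorem needs anyway to conclude that $S$-coloured species form a scopic differential 2-rig), and of making the coloured Leibniz rule explicit via the splitting of the coend of \Cref{chic_fact} according to which summand receives the adjoined $s$-coloured point; the paper's version buys an essentially one-line proof of commutativity.
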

\begin{proof}
	To fix ideas, let $S=\{s,t\}$ have just two elements; extending the following argument to an arbitrary set $S$ is obvious. Define
	\[\textstyle\frac \de{\de s} : S\emdash\Spc\overset{\Cref{decompo}}\cong \Cat(\sfP[\{s\}]\times\sfP[\{t\}],\Set)\cong\Spc^{\sfP[\{t\}]} \xto{\de_*}\Spc^{\sfP[\{t\}]}\cong S\emdash\Spc,\]
	and $\frac \de{\de t}$ similarly. With this definition, the square
	\[\label{part_deriv}\vxy{
		S\emdash\Spc \ar[d]_{\frac\de{\de s}}\ar[r]^{\frac\de{\de t}}& S\emdash\Spc\ar[d]^{\frac\de{\de s}}\\
		S\emdash\Spc \ar[r]_{\frac\de{\de t}} & S\emdash\Spc
		}\]
	can be decomposed into commutative sub-squares in a straightforward way.
\end{proof}
From \Cref{part_deriv} it is evident how each $\frac\de{\de s}$ admits a left and a right adjoint, thus giving to $S\emdash\Spc$ the structure of a scopic differential 2-rig $(S\emdash\Spc,\frac \de{\de s})$ for each choice of colour $s\in S$. As a consequence, all theorems that apply to a scopic differential 2-rig apply to $S\emdash\Spc$.
\subsection{$k$-vector species}\label{sec_linspc}
\begin{definition}[Vector species]\label{def_lin_specs}
	Let $k$ be a field and $S$ a set; the category of $k$-vector $S$-species is the category of $(S,k\emdash\Vect)$-species in the sense of \autoref{spc_n_Vspc}.
\end{definition}
The category of $(1,k\emdash\Vect)$-species is simply called the category of ($k$-)vector species. Similar definitions hold more generally for $R$-modules, but vector species are a more interesting subject for enumerative combinatorics due to a highly nontrivial theory of Hopf monoids under Day convolution, cf.~\cite{aguiar2010monoidal}, that has (evidently) relations to the linear representations of the symmetric groups. For the purposes of the present work, moving to vector species maintains all the core results, and enriches some. In particular, vector species carry the same monoidal structures of \autoref{various_mono} (a fact that is the basic building block for the interest of combinatorialists and algebraists/geometers in vector species, cf.~\cite{aguiar2010monoidal,loday2012algebraic}), they form a scopic differential 2-rig (with a similar argument of \Cref{futtanota}). As a consequence, all theorems that apply to a scopic differential 2-rig apply to $(S,k\emdash\Vect)\emdash\Spc$.
\subsection{Linearly ordered species}\label{sec_ordspc}
The category $\cLin$ is defined in \cite{10.1007/BFb0072518} as the category of totally ordered finite sets $\langle n\rangle := \{1<\dots<n\}$ and \emph{order-preserving} bijections $\sigma : \langle n\rangle \to \langle n\rangle$. Let's give a more intrinsic presentation for it.
\begin{definition}
	Let $S_n$ be the symmetric group of an $n$-set $[n]$. Let $r : BS_n \to\Set$ be the (functor associated to the) left regular representation of $S_n$, \ie the action $S_n\to S_n$ given by left multiplication; denote $[S_n\slice S_n]$ the associated action groupoid \cite{Higgins1971}. \ie the strict pullback
	\[\vxy{
			[S_n\slice S_n] \ar[r]\ar[d]\ar@{}[dr]|(.375){\dopb} & \Set_* \ar[d]\\
			BS_n \ar[r]_r & \Set.
		}\]
\end{definition}
\begin{remark}\label{its_trivial}
	Notice that since the action is strictly transitive, $[S_n\slice S_n]$ consists of the maximally connected groupoid on the underlying set of $S_n$. As such, the unique functor $[S_n\slice S_n]\to 1$ is an equivalence of categories.
\end{remark}
\begin{definition}[The $\bbL$ category, and $\bbL$-species]
	We define the category $\bbL$ as the coproduct (in the category of groupoids) $\sum_{n\ge 0} [S_n\slice S_n]$; if $\clV$ is a Bénabou cosmos, the category of $\clV$-valued $\bbL$-species is the category of functors $\bbL \to \clV$.
\end{definition}
In the following we use the shorthand of denoting the category of $\Set$-valued $\bbL$-species simply as $\bbL\Spc$. As a consequence of the equivalence established in \autoref{its_trivial} above, an $\bbL$-species is \emph{essentially} a symmetric sequence:
\[\textstyle\bbL\Spc = \prod_{n\ge 0}\Cat([S_n\slice S_n],\Set)\cong \prod_{n\ge 0}\Cat(1,\Set)\cong\Set^\bbN.\]

However, the interest in $\bbL$-species arises as (contrary to what happens for $(S,\Set)\emdash\Spc$, cf.~\cite[§2.5]{bergeron1998combinatorial}) differential equations in $\bbL\Spc$ have unique solutions [\emph{ibi}, §5.0], following more closely the properties of formal power series.

Usually one compensates for the extreme rigidity of the domain category of an $\bbL$-species fixing a commutative ring $A$ and `enriching' the codomain of species in the category of \emph{$A$-weighted sets} (cf.~\cite[§2.3]{bergeron1998combinatorial}).

Although $\bbL\Spc$ is not a category of the form $(S,\clV)\emdash\Spc$ in the sense of \autoref{spc_n_Vspc}, it is a `species-like' category, in the sense that it retains similar properties of the ones enjoyed by $\Spc$:
\begin{itemize}
	\item Similarly to \autoref{some_ups_P}.\ref{ups_2}, $\bbL$ is the skeleton of the category $\cLin$ of finite totally ordered sets, and order-preserving bijections (here, relabeling functions can exist between sets whose elements have different `names');
	\item limits, colimits
	\item $\cLin$ carries a monoidal structure given by \emph{ordinal sum}, cf.~\cite[§5.1]{bergeron1998combinatorial}, defined as $\langle n\rangle\oplus\langle m\rangle := \{1<\dots<n<1'<\dots<m\}$; as a consequence, $\bbL\Spc$ has a Day convolution $\Day^\bbL$ monoidal structure, and a plethystic substitution operation, similarly to \autoref{various_mono}.\ref{ms_3}.
	\item $\bbL\Spc$ is a differential 2-rig under the derivative functor $\de \fkF\langle n\rangle := \fkF\langle 1\oplus n\rangle$ (the new element $1$ is adjoined to $\langle n\rangle$ as a \emph{bottom} element).
	\item $\bbL\Spc$ is equipped with structures that are not present, or behave worse, in $\Spc$: for example, it carries a \emph{Heaviside} product,\footnote{Called `convolution product' in \cite[§5.1]{bergeron1998combinatorial}, a terminology that might clash with `Day convolution'.} defined as
	      \[F\oast G := F\Day^\bbL y\Uno\Day^\bbL G,\]
	      and an \emph{antiderivative} operation $\int F$, defined (on objects of $\cLin$) as
	      \[\textstyle (\int F)\varnothing = \varnothing \qquad\qquad (\int F) U = F(U\setminus\{\min U\})\]
	      where $\min U$ is the bottom element of $U$. Observe that $\de(\int F)=F$.
\end{itemize}
\begin{remark}\label{LSPC_scopic}
	Note that $\de$ defined above admits a left and a right adjoint, with a similar argument as the one given in \Cref{futtanota}; this makes $\bbL\Spc$ a scopic differential 2-rig.
\end{remark}
As a consequence, all theorems that apply to a scopic differential 2-rig apply to $\bbL\Spc$.
\subsection{M\"obius species}\label{sec_mobspc}
\begin{definition}\label{fPosTopbotIso}
	Let $\Set$ be the category of sets, equipped with the tautological functor $J:\Set\to\Cat$ regarding each set as a discrete category; let $\Pos^{\top\bot}$ be the category of posets with top and bottom, where morphisms are top- and bottom-preserving monotone maps; consider the comma category 
	\[\label{category_int}\vxy{
		(J/\Pos^{\top\bot})\ar[r]\ar[d] & {*}\ar[d]^{\Pos^{\top\bot}} \\
		\Set \ar[r]_J & \Cat. \ultwocell<\omit>{}
		}\]
\end{definition}
Unwinding the definition, $(J/\Pos^{\top\bot})$ is the category having
\begin{itemize}
	\item objects the pairs $(X,P : X \to \Pos^{\top\bot})$ where $X$ is a set, and $P$ is a functor: note that this means $P = \{P_x\mid x\in X\}$ is a $X$-parametric family of posets with top and bottom;
	\item morphisms $(X,P)\to(Y, Q)$ are the functions $h : X\to Y$ such that $Q\circ h = P$. Each such $h$ splits into a family of monotone maps $P_x \to Q_{hx}$;
\end{itemize}
\begin{remark}
	The category $(J/\Pos^{\top\bot})$ is a complete and cocomplete (in fact, locally presentable), monoidal closed category.
\end{remark}
\begin{proof}
	Colimits are computed in $(J/\Pos^{\top\bot})$ as in $\Set$ (created by the vertical left functor in \Cref{category_int}); the category is accessible, as it arises as a limit in accessible categories and accessible functors; thus it is locally presentable, hence also complete (limits are, however, not straightforward to describe --even characterizing a terminal object is a bit convoluted).

	As for its monoidal closed structure, call a map $P\times Q\to R$ in $\Pos^{\top\bot}$ \emph{balanced} if all $f(p,\firstblank) : Q\to R$ and $f(\firstblank,q) : P\to R$ preserve top and bottom elements, and call $\cate{BPos}(P\times Q,R)$ the set of all such balanced maps.
	
	Then, the existence of a symmetric tensor product $\hat\otimes$ such that
	\[\cate{BPos}(P\times Q,R)\cong \Pos^{\top\bot}(P\mathbin{\hat\otimes} Q,R)\]
	`representing balanced maps' follows from a standard argument on lifting monoidal structures to categories of algebras ($\Pos^{\top\bot}$ is the category of algebras for the simultaneous completion under initial and terminal object, regarding $\Pos\subset\Cat$).

	This gives a monoidal (closed) structure to $(J/\Pos^{\top\bot})$ where tensor and exponentials are defined as
	\begin{gather}
		X\times Y \xto{P\times Q} \Pos^{\top\bot}\times \Pos^{\top\bot} \xto{\hat\otimes} \Pos^{\top\bot},\notag\\
		X\times Y \xto{P\times Q} (\Pos^{\top\bot})^\op\times \Pos^{\top\bot} \xto{[\firstblank,\firstblank]} \Pos^{\top\bot}.\qedhere
	\end{gather}
\end{proof}
Thus, $((J/\Pos^{\top\bot}), \hat\otimes)$ works as Bénabou cosmos, and we can define
\begin{definition}[M\"obius species]\label{def_mobius_specs}
	The category $\sfM\Spc$ of \emph{M\"obius species} is the category of functors $\sfP \to (J/\Pos^{\top\bot})$, \ie the category $(1,(J/\Pos^{\top\bot}))$-species in the notation of \autoref{spc_n_Vspc} and \autoref{fPosTopbotIso}.
\end{definition}
Since $\sfP$ is a groupoid, each functor $\sfP\to(J/\Pos^{\top\bot})$ must factor through the core of $(J/\Pos^{\top\bot})$; calling `$\Int$' such core we obtain \cite[Definition 2.1]{mendez1991mobius} where $h$ is assumed to be a bijection (and the indexing sets are finite, hence $h : [n] \to [n]$ is just a permutation), inducing order-isomorphisms $P_i \cong Q_{\sigma i}$ for each $i = 1,\dots,n$.

Now, the definition of Day convolution, plethystic substitution, and derivative are as in \autoref{sec_species}, just changing base of enrichment for $(J/\Pos^{\top\bot})$; the derivative endofunctor $\de : \sfM\Spc \to \sfM\Spc$ has a left and a right adjoint, thus making $(\sfM\Spc,\Day,\de)$ into a scopic differential 2-rig.

As a consequence, all theorems that apply to a scopic differential 2-rig apply to $\sfM\Spc$.
\subsection{Nominal sets}
\begin{definition}
	Consider the chain of inclusions
	\[S_1 \subset S_2\subset \dots \subset S_n \subset \dots \]
	each identifying a group $S_n$ as the subgroup of $S_{n+1}$ spanned by the elements fixing $n+1$; the colimit $S_\infty$ of this chain in the category of groups is called the \emph{infinite symmetric group} and consists of all bijections of $\bbN = \{0,1,2,\dots\}$ that fix all but finitely many elements (call these \emph{finitely supported permutations}).
\end{definition}
The group-theoretic properties of $S_\infty$ are the subject of intense study in connection with representation theory \cite{Dixon1996,vershik2012nonfree}, the theory of Von Neumann algebras \cite{thoma1964unzerlegbaren,vershik2002characters,okounkov1996young}, ergodic theory, \cite{olshanski1991unitary,glasner2003ergodic} and descriptive set theory \cite{Kechris1995} (due to the nature of Polish group of $S_\infty$). For us, the connection with computer science \cite{Pitts2013,Petrisan2010}, set theory \cite{Wraith1978,Felgner1971,Blass1992} and topos theory \cite{fiore1999abstract,FioreMenni2004} are an additional source of intuition: we define
\begin{definition}[Nominal species]\label{def_nominal_specs}
	The category $\Nom$ of \emph{nominal sets} is the category of (set-theoretic) left actions of $S_\infty$, or in other words the category of functors $F : S_\infty\to\Set$.
\end{definition}
So, the category of nominal sets is the topos of $S_\infty$-sets. There are equivalent descriptions for $\Nom$: among them, what we refer to as the \emph{Schanuel model} for nominal sets, we characterize
\begin{itemize}
	\item $\Nom$ as the category of sheaves for the atomic topology on $\Inj^\op$ of finite sets and \emph{injections};
	\item $\Nom$ as the category of pullback preserving functors $\Inj\to\Set$.
\end{itemize}
The category of nominal sets is especially important in light of its relation to the category of species: Fiore and Menni observed that the obvious inclusion $i:\sfP\hookrightarrow \Inj$ induces a left adjoint monad $T_i=i^* \circ \Lan_i : \Spc \to \Spc$, and $\Nom$ identifies to $\Kl(T_i)$.
\begin{proposition}
	The category $\Nom$ admits a Day convolution monoidal structure $\Day$ (regarding it as presheaves over $S_\infty$ its existence follows from general facts about convolution of $G$-sets; in the Schanuel model, one mimics the definition of \autoref{some_ups_P}.\ref{ups_4} for injective functions -given injections $[n]\to [n'],[m]\to [m']$ there is an injection $[n+n']\to [m+m']$- and gets the same expression of \Cref{day_def}, just sheafified).
	As a consequence, the category $(\Nom,\Day)$ is a 2-rig for the doctrine of all colimits.

	$\Nom$ also admits a plethystic substitution operation induced by $\Day$ and defined similar to \autoref{various_mono}.\ref{ms_3}. Day convolution is a closed monoidal structure, and it can be shown (cf.~\cite{0b3f85c56e6c7e0598d42984000f394b12f187fd}) that $\homDay[y\Uno][F]$ is a derivative with a left adjoint $y\Uno\Day\firstblank$ ($y\Uno$ is the \emph{sheaf} associated to the representable on $\Uno$, which is not already a sheaf, cf.~\cite[p. 1]{Barr1980}).

	As a consequence, all theorems that apply to a \emph{left} scopic differential 2-rig apply to $\Nom$; incidentally, note that $\Nom$ is an example of a left scopic differential 2-rig which is not scopic, as $\de = \homDay[y\Uno]$ can't have a right adjoint (it doesn't preserve all colimits).
\end{proposition}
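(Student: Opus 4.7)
The strategy is to inherit the Day convolution of the presheaf topos $[\Inj^{\op},\Set]$ along the sheafification for the atomic topology, then run the differential arguments of \autoref{sec_species} almost verbatim, with the one crucial difference (failure of tinyness of $y\Uno$) responsible for asymmetry between left and right adjoints. First I would observe that $\Inj$ carries the same monoidal structure $\oplus$ of \autoref{some_ups_P}.\ref{ups_4}, given by disjoint union of injections; this equips $[\Inj^{\op},\Set]$ with a symmetric closed Day convolution. Sheafification for the atomic topology is a finite-limit-preserving reflection, and a standard lifting argument (applicable equally through the Schanuel model or through the Fiore--Menni characterization $\Nom\cong \Kl(T_i)$ recalled just before the statement) shows that $\Day$ descends to the reflective subcategory $\Nom$, with internal hom $\homDay[-][-]$. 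Plethystic substitution is then defined by iterated Day convolution as in \autoref{various_mono}.\ref{ms_3}, and cocontinuity of $A\Day\firstblank$ in each variable (automatic from the coend formula) yields that $(\Nom,\Day)$ is a 2-rig for all colimits.

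Next, I would define the derivative by precomposition with $\Delta=\firstblank\oplus\Uno\colon\Inj\to\Inj$: since $\Delta$ preserves pullbacks it induces an endofunctor of sheaves, so $\de F=F\circ\Delta$ is well defined on $\Nom$. A Yoneda calculation, of the same flavour as \autoref{flesh_deriv} and its footnote, identifies $\de F$ with $\homDay[y\Uno][F]$, where $y\Uno$ denotes the sheafification of the representable at $\Uno$ (the latter not being a sheaf for the atomic topology). Linearity of $\de$ is then immediate since $\de$ is a right adjoint, and the Leibniz isomorphism $\de(F\Day G)\cong \de F\Day G+F\Day \de G$ follows from decomposing $\Delta$-structures on a disjoint union according to which summand absorbs the distinguished point, exactly as for species.

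The adjunction $y\Uno\Day\firstblank\dashv\de$ is then tautological: $\de=\homDay[y\Uno][\firstblank]$ is the internal hom out of $y\Uno$ in the closed monoidal structure, so it admits a cocontinuous left adjoint given by convolution with $y\Uno$. This already exhibits $\Nom$ as a left scopic differential 2-rig, so that the results of \autoref{smut_n_scopic}, in particular \autoref{prop_1}, apply.

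The one genuinely delicate point, and I expect this to be the main obstacle, is the non-existence of a right adjoint to $\de$. For the species topos $\Spc$ the argument of \autoref{flesh_deriv} relied crucially on $y\Uno$ being a \emph{tiny} object of a presheaf category, whence $\homDay[y\Uno][\firstblank]$ was automatically cocontinuous. In $\Nom$ this fails: because the representable $\Inj(\Uno,\firstblank)$ is not already a sheaf, $y\Uno$ arises by sheafification, and the atomic-sheafification reflection does not preserve tinyness. To confirm the failure, I would exhibit a small colimit that $\de$ does not preserve — a natural candidate being the canonical presentation of a nominal set of infinite support as a filtered join of its finitely supported subobjects, where $\de$ fails to preserve the colimit by a support-counting argument in the Schanuel model. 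Once this is established, the special adjoint functor theorem denies a right adjoint to $\de$, and $\Nom$ is confirmed to be left scopic but not scopic, as asserted.
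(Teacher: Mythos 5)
The paper does not supply a separate proof: the proposition carries its own justification inline (Day convolution from general facts about $G$-set convolution or, in the Schanuel model, by sheafifying the convolution on $[\Inj^\op,\Set]$; the differential structure is delegated to the cited reference; the failure of a right adjoint is asserted from non-cocontinuity of $\de$). Your architecture follows the same Schanuel-model route and fills in sensible detail -- the descent of $\Day$ along the compatible reflection, the identification $\de F\cong F\circ\Delta\cong\homDay[y\Uno][F]$, and the tautological left adjoint $y\Uno\Day\firstblank$ are all correct and are exactly what the paper has in mind. However, two of your steps are genuinely flawed. First, ``linearity of $\de$ is immediate since $\de$ is a right adjoint'' is a non sequitur: right adjoints preserve \emph{limits}, while linearity in the sense of \autoref{def_gen_diff2rig}.\ref{d_1} is preservation of coproducts. (Indeed the whole point of the last clause of the statement is that $\de$ is \emph{not} cocontinuous.) Preservation of finite coproducts does hold, but it needs the same pointwise/decomposition argument you invoke for the Leibniz isomorphism, not adjointness; and one must then be careful that the resulting differential 2-rig structure is only for the doctrine of (finite) coproducts, not of all colimits.

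Second, your proposed witness for non-cocontinuity does not work. In the Schanuel model every element of every object is finitely supported, so there is no ``nominal set of infinite support''; and the colimits you single out -- filtered joins of subobjects, and coproducts -- are precisely the ones computed pointwise in the model of pullback-preserving functors $\Inj\to\Set$, hence preserved by precomposition with $\Delta$. The failure of cocontinuity of $\de=\homDay[y\Uno][\firstblank]$ must be located where the reflection genuinely intervenes, i.e.\ at quotients/coequalizers: since $\Nom(y\Uno,\firstblank)$ is evaluation at $\Uno$ of the \emph{underlying presheaf}, $y\Uno$ fails to be tiny exactly because the inclusion of sheaves into presheaves does not preserve coequalizers. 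Replacing your filtered-colimit candidate with a coequalizer that sheafification modifies at stage $\Uno$ would repair this step.
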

\section{Conclusions and future work}
Here we sketch directions of investigation for the future.
\begin{problem}\label{prob_1}
Let $\bfK$ be a strict 2-category with all finite weighted limits. Consider objects $X,B\in\bfK$ in a diagram of the following form:
\[\label{vaca}\vxy{
		K \ar[r]_-\id & K & \ar[l]^-f K \ar[r]_-f & K & B \ar[l]^-b
	}\]
The \emph{Vaucanson limit} \cite{Heudin2008-ne}
\footnote{Jacques de Vaucanson ($\ast$1709--$\dag$1782) was, besides the inventor of the modern lathe and of automatic loom, the creator of sophisticated and almost lifelike mechanical toys such as the `\emph{flûteur automate}' and the `\emph{canard défécateur}'. The mechanical duck appeared to have the ability to eat kernels of grain, and to metabolize and defecate them.}
obtained from \Cref{vaca} consists of the limit obtained taking (cf.~\cite{Fiore2006,2catlimits})
\begin{itemize}
	\item the \emph{inserter} $K \xot u \clI(f,\id[K])\xto u K$ of the left cospan;
	\item the \emph{comma object} $K \xot v f/b \xto q B$ of the right cospan;
	\item the strict pullback $\clI(f,\id[K])\times_K (f/b)$ of $u,v$.
\end{itemize}
If $\bfK$ is the 2-category $\mathbf{Cat}$ of categories, functors, and natural transformations, Vaucanson limits recover the categories $\Mly(A,B)$ when $B=1$ is the terminal category and $b$ is an object therein.

Formal theory of Mealy automata is then the study of Vaucanson objects in $\bfK$. One can define analogues for $\Mly(A,B)$, $\Mre(A,B)$ enriched over a generic monoidal base $\clW$ in the sense of \cite[Ch. 6]{Bor2}, \cite{kelly}, for example a quantale \cite{rosenthal1,Eklund2018} like $[0,\infty]^\op$, so that there is a \emph{metric space} $\Mly[(X,d)](f,b)$ \cite{Lawvere1973,Clementino2003,hofmann2014monoidal} associated to every nonexpansive map $f : X\to X$ and point $b\in X$. This begs various questions: what is this theory (intended in the technical sense: \Cref{vaca} can be pruned to become the simple diagram
\[\vxy{ \ar@(ul,dl)_{f} K & B\ar[l]^-b }\]
and interpreted as a certain $\Cat$-enriched limit sketch of which categories $\Mly(f,b)$ are models: this is suggestive, in light of \cite{lmcs_6213}), and how can it profit from being studied via discrete dynamical methods? Can it be related with fixpoint theory as classically intended in \cite{Granas2003-yj}?
\end{problem}
\begin{problem}\label{prob_2}
The canonical commutation $[\de,L]=\de L-L\de =1$ valid in Joyal's virtual species suggests how $L$ acts as a `conjugate operator' to $\de$. Compare this with the analogue relation $[x\cdot\firstblank,\frac d{dx}]=1$ valid in the ring $C^\omega(\bbR)$ of analytic functions on, say, the real line \cite[Ch. 5]{Eyges1980ez}, \cite{folland2009fourier}. Is it the case that there is a still undiscovered `categorified Greenfunctionology' introducing a `Heaviside distribution' $\Theta$ with the property that the colimit of $F$ weighted by $\Theta$ is a solution of the differential equation $\de G = F$ on species, i.e. $\de\left(\int^X\Theta(X,\firstblank)\times F[X]\right) \cong F$?
Compare this with the well-known integral equation $\frac d{dx}\left(\int \Theta(x-t)f(t)dt\right)=f(x)$ for the Heaviside function, and cf.~\cite{day2011} where Day sketched a categorified theory of Fourier transforms (upper and lower transforms, Parseval relations, etc.) for categories enriched over a $*$-autonomous base $\clV$ \cite{Barr1979}, generalizing Joyal's categories of analytic functors.
We intend to pursue the matter, captivated by its compelling aesthetic beauty.
\end{problem}

\putbib{bib/allofthem}{amsalpha}

\newcommand{\etalchar}[1]{$^{#1}$}
\providecommand{\bysame}{\leavevmode\hbox to3em{\hrulefill}\thinspace}
\providecommand{\MR}{\relax\ifhmode\unskip\space\fi MR }
\providecommand{\MRhref}[2]{%
  \href{http://www.ams.org/mathscinet-getitem?mr=#1}{#2}
}
\providecommand{\href}[2]{#2}
\begin{thebibliography}{GTWW75}

\bibitem[ABLR02]{adamek2002classification}
J.~Ad{\'a}mek, F.~Borceux, S.~Lack, and J.~Rosick{\'y}, \emph{A classification
  of accessible categories}, Theory Appl. Categ. \textbf{175} (2002), no.~1-3,
  7--30.

\bibitem[Ad{\'a}74]{Adamek1974}
J.~Ad{\'a}mek, \emph{Free algebras and automata realizations in the language of
  categories}, Commentationes Mathematicae Universitatis Carolinae \textbf{015}
  (1974), no.~4, 589--602 (eng).

\bibitem[AM75]{Arbib1975}
M.A. Arbib and E.G. Manes, \emph{A categorist's view of automata and systems},
  Lecture Notes in Computer Science, Springer Berlin Heidelberg, 1975,
  pp.~51--64.

\bibitem[AM10]{aguiar2010monoidal}
M.~Aguiar and S.A. Mahajan, \emph{Monoidal functors, species and hopf
  algebras}, CRM monograph series, American Mathematical Society, 2010.

\bibitem[AM20]{Aguiar2020}
M.~Aguiar and S.~Mahajan, \emph{Bimonoids for hyperplane arrangements},
  Cambridge University Press, February 2020.

\bibitem[AM22]{Aguiar2022}
\bysame, \emph{Coxeter bialgebras}, Cambridge University Press, October 2022.

\bibitem[Arb00]{arbogast1800}
L.~F.~A. Arbogast, \emph{Du calcul des derivations}, Levrault, Strasbourg, 1800
  (French), in French.

\bibitem[AT90]{adam-trnk:automata}
J.~Adámek and V.~Trnková, \emph{Automata and algebras in categories}, Kluwer,
  1990.

\bibitem[AV08]{adamek2008analytic}
J.~Ad{\'a}mek and J.~Velebil, \emph{Analytic functors and weak pullbacks},
  Theory Appl. Categ. \textbf{21} (2008), No. 11, 191--209.

\bibitem[Bai72]{Bainbridge1972}
E.S. Bainbridge, \emph{A unified minimal realization theory, with duality, for
  machines in a hyperdoctrine}, Technical Report, Computer and Communication
  Sciences Department, University of Michigan, 1972.

\bibitem[Bai75]{Bainbridge1975}
\bysame, \emph{Addressed machines and duality}, Category Theory Applied to
  Computation and Control (Ernest~Gene Manes, ed.), Springer Berlin Heidelberg,
  1975, pp.~93--98.

\bibitem[Bar79]{Barr1979}
M.~Barr, \emph{{$\ast$}-autonomous categories}, Springer Berlin Heidelberg,
  1979.

\bibitem[BB69]{BarrBeck1969}
M.~Barr and J.~Beck, \emph{Homology and standard constructions}, Seminar on
  Triples and Categorical Homology Theory, Lecture Notes in Mathematics,
  vol.~80, Springer, 1969, Reprinted in Theory and Applications of Categories,
  18 (2008), 186--248, pp.~186--248.

\bibitem[BD80]{Barr1980}
M.~Barr and R.~Diaconescu, \emph{Atomic toposes}, Journal of Pure and Applied
  Algebra \textbf{17} (1980), no.~1, 1–24.

\bibitem[Ber87]{Bergeron1987}
F.~Bergeron, \emph{Une combinatoire du pléthysme}, Journal of Combinatorial
  Theory, Series A \textbf{46} (1987), no.~2, 291--305.

\bibitem[BFL{\etalchar{+}}23]{boccali2023semibicategory}
G.~Boccali, B.~Femić, A.~Laretto, F.~Loregian, and S.~Luneia, \emph{The
  semibicategory of {M}oore automata}, 2023.

\bibitem[BK81]{Betti1981}
R.~Betti and S.~Kasangian, \emph{A quasi-universal realization of automata.},
  Universit{\`a} degli Studi di Trieste. Dipartimento di Scienze Matematiche
  (1981), 41--48.

\bibitem[BK82]{Betti1982}
\bysame, \emph{Una proprietà del comportamento degli automi completi.},
  Universit{\`a} degli Studi di Trieste. Dipartimento di Scienze Matematiche
  (1982), 17--26.

\bibitem[Bla76]{blackwell_thesis}
R.O. Blackwell, \emph{Some existence theorems in the theory of doctrines},
  Ph.D. thesis, NSW University, Sydney, 1976.

\bibitem[BLL98]{bergeron1998combinatorial}
F.~Bergeron, G.~Labelle, and P.~Leroux, \emph{Combinatorial species and
  tree-like structures}, Encyclopedia of Mathematics and its Applications,
  no.~67, Cambridge University Press, Cambridge, 1998.

\bibitem[BLLL23]{EPTCS397.1}
G.~Boccali, A.~Laretto, F.~Loregian, and S.~Luneia, \emph{Bicategories of
  automata, automata in bicategories}, Proceedings of the Sixth International
  Conference on Applied Category Theory 2023, University of Maryland, 31 July -
  4 August 2023 (Sam Staton and Christina Vasilakopoulou, eds.), Electronic
  Proceedings in Theoretical Computer Science, vol. 397, Open Publishing
  Association, 2023, pp.~1--19.

\bibitem[Bor94a]{Bor1}
F.~Borceux, \emph{Handbook of categorical algebra. 1}, Encyclopedia of
  Mathematics and its Applications, vol.~50, Cambridge University Press,
  Cambridge, 1994, Basic category theory.

\bibitem[Bor94b]{Bor2}
\bysame, \emph{Handbook of categorical algebra 2. (categories and structures)},
  Encyclopedia of Mathematics and its Applications, vol.~51, Cambridge
  University Press, Cambridge, 1994.

\bibitem[BQR98]{borceux1998theory}
F.~Borceux, C.~Quinteiro, and J.~Rosick\'y, \emph{A theory of enriched
  sketches}, Theory Appl. Categories \textbf{4} (1998), no.~3, 47--72.

\bibitem[BR90]{0445243bd75f64484c47f7db18f2569031b5e3bd}
F.~Bergeron and C.~Reutenauer, \emph{Combinatorial resolution of systems of
  differential equations {III}: a special class of differentially algebraic
  series}, European Journal of Combinatorics \textbf{11} (1990), 501--512.

\bibitem[BRSV92]{Bonetti_1992}
F.~Bonetti, G.-C. Rota, D.~Senato, and A.M. Venezia, \emph{On the foundation of
  combinatorial theory {X}. a categorical setting for symmetric functions},
  Studies in Applied Mathematics \textbf{86} (1992), no.~1, 1–29.

\bibitem[BS92]{Blass1992}
A.~Blass and A.~Scedrov, \emph{Complete topoi representing models of set
  theory}, Annals of Pure and Applied Logic \textbf{57} (1992), no.~1, 1–26.

\bibitem[BW85]{barr+wells1985}
M.~Barr and C.~Wells, \emph{Toposes, triples and theories}, Springer-Verlag,
  New York, 1985, Available online at
  \url{http://www.tac.mta.ca/tac/reprints/articles/12/tr12abs.html}.

\bibitem[Cay89]{cayley}
A.~Cayley, \emph{A theorem on trees}, Quarterly Journal of Mathematics
  \textbf{23} (1889), 376--378.

\bibitem[CCL{\etalchar{+}}]{fib_of_alg}
D.~Castelnovo, G.~Coraglia, F.~Loregian, N.~Martins-Ferreira, and Ü. Reimaa,
  \emph{The fibration of algebras}, in preparation.

\bibitem[Che93]{Chen_1993}
W.Y.C. Chen, \emph{The theory of compositionals}, Discrete Mathematics
  \textbf{122} (1993), no.~1–3, 59–87.

\bibitem[CP17]{lmcs_6213}
T.~Colcombet and D.~Petri\c{s}an, \emph{Automata minimization: A functorial
  approach}, Proceedings of the 7th International Conference on Algebra and
  Coalgebra in Computer Science (CALCO 2017), Leibniz International Proceedings
  in Informatics (LIPIcs), vol.~72, Schloss Dagstuhl--Leibniz-Zentrum für
  Informatik, 2017, pp.~11:1--11:19.

\bibitem[CT03]{Clementino2003}
M.M. Clementino and W.~Tholen, \emph{Metric, {{Topology}} and
  {{Multicategory}}: {{A Common}} approach}, Journal of Pure and Applied
  Algebra \textbf{179} (2003), no.~1, 13--47.

\bibitem[Cur12]{curiennone}
P.L. Curien, \emph{Operads, clones, and distributive laws}, Operads and
  universal algebra, World Scientific, 2012, pp.~25--49.

\bibitem[Day11]{day2011}
Brian~J. Day, \emph{Monoidal functor categories and graphic {Fourier}
  transforms}, Theory and Applications of Categories \textbf{25} (2011), no.~5,
  118--141.

\bibitem[DM96]{Dixon1996}
J.D. Dixon and B.~Mortimer, \emph{Examples and applications of infinite
  permutation groups}, p.~274–301, Springer New York, 1996.

\bibitem[Dob07]{Doberkat2007}
E.-E. Doberkat, \emph{Stochastic relations: Foundations for markov transition
  systems}, Chapman and Hall/CRC, May 2007.

\bibitem[Dus75]{Duskin1975}
J.~Duskin, \emph{Simplicial methods and the interpretation of ``triple''
  cohomology}, Memoirs of the American Mathematical Society, vol. 163, American
  Mathematical Society, 1975.

\bibitem[EGHK18]{Eklund2018}
P.~Eklund, J.~Guti\'errez Garci\'a, U.~H\"ohle, and J.~Kortelainen,
  \emph{Semigroups in complete lattices}, Springer International Publishing,
  2018.

\bibitem[EKKK74]{Ehrig}
H.~Ehrig, K.-D. Kiermeier, H.-J. Kreowski, and W.~K{\"u}hnel, \emph{Universal
  theory of automata. {A} categorical approach}, Teubner {Studienb{\"u}cher}.
  {Informatik}. {Stuttgart}, 1974 (English).

\bibitem[Eyg80]{Eyges1980ez}
L.~Eyges, \emph{The classical electromagnetic field}, Dover Books on Physics,
  Dover Publications, Mineola, NY, November 1980 (en).

\bibitem[Fel71]{Felgner1971}
U.~Felgner, \emph{Models of zf-set theory}, Lecture Notes in Mathematics, vol.
  223, Springer, Berlin, Heidelberg, 1971.

\bibitem[FGHW16]{fiore2016relative}
M.~Fiore, N.~Gambino, M.~Hyland, and G.~Winskel, \emph{Relative pseudomonads,
  {K}leisli bicategories, and substitution monoidal structures}, Selecta
  Mathematica (2016), 1--40.

\bibitem[Fio06]{Fiore2006}
T.M. Fiore, \emph{Pseudo limits, biadjoints, and pseudo algebras: categorical
  foundations of conformal field theory}, Mem. Amer. Math. Soc. \textbf{182}
  (2006), no.~860, x+171.

\bibitem[FM04]{FioreMenni2004}
M.~Fiore and M.~Menni, \emph{Reflective kleisli subcategories of the category
  of eilenberg-moore algebras for factorization monads}, Theory and
  Applications of Categories \textbf{15} (2004), no.~2, 40--65.

\bibitem[Fol09]{folland2009fourier}
G.B. Folland, \emph{Fourier analysis and its applications}, vol.~4, American
  Mathematical Soc., 2009.

\bibitem[FPT99]{fiore1999abstract}
M.~Fiore, G.~Plotkin, and D.~Turi, \emph{Abstract syntax and variable binding},
  Proceedings of the 14th Annual IEEE Symposium on Logic in Computer Science,
  IEEE, 1999, pp.~193--202.

\bibitem[Fre09]{Fresse2009}
B.~Fresse, \emph{Modules over operads and functors}, Lecture Notes in
  Mathematics, vol. 1967, Springer-Verlag, Berlin, 2009.

\bibitem[Fri15]{fritz2015convex}
T.~Fritz, \emph{Convex spaces {I}: Definition and examples}, 2015.

\bibitem[GD03]{Granas2003-yj}
A.~Granas and J.~Dugundji, \emph{Fixed point theory}, 2003 ed., Springer
  Monographs in Mathematics, Springer, New York, NY, June 2003 (en).

\bibitem[Get09]{getzler2009operads}
E.~Getzler, \emph{Operads revisited}, Algebra, Arithmetic, and Geometry, Progr.
  Math., vol. 269, Birkh\"auser Boston, Inc., Boston, MA, 2009, pp.~675--698.

\bibitem[GF16]{garner2016commutativity}
R.~Garner and I.-L. Franco, \emph{Commutativity}, Journal of Pure and Applied
  Algebra \textbf{220} (2016), no.~5, 1707--1751.

\bibitem[GG76]{gouzou1976fibrations}
M.F. Gouzou and R.~Grunig, \emph{Fibrations relatives}, Seminaire de Theorie
  des Categories, dirige par J. Benabou \textbf{619} (1976), 620--643.

\bibitem[GGV22]{gambino2022monoidal}
N.~Gambino, R.~Garner, and C.~Vasilakopoulou, \emph{Monoidal {K}leisli
  bicategories and the arithmetic product of coloured symmetric sequences},
  2022.

\bibitem[GJ94]{getzler1994operads}
E.~Getzler and J.D.S. Jones, \emph{Operads, homotopy algebra and iterated
  integrals for double loop spaces}, arXiv preprint hep-th/9403055 (1994),
  1--70.

\bibitem[GJ09]{goerss-jardine}
P.G. Goerss and J.F. Jardine, \emph{Simplicial homotopy theory}, Modern
  Birkh\"{a}user Classics, vol. 174, Birkh\"{a}user Verlag, Basel, Basel, 2009,
  Reprint of the 1999 edition [MR1711612]. \MR{2840650}

\bibitem[GJ17]{gambo-joy}
N.~Gambino and A.~Joyal, \emph{On operads, bimodules and analytic functors},
  2017, pp.~v + 110.

\bibitem[GK98]{Getzler1998}
E.~Getzler and M.~Kapranov, \emph{Modular operads}, Compositio Mathematica
  \textbf{110} (1998), no.~1, 65–125.

\bibitem[GK13]{gambino_kock_2013}
N.~Gambino and J.~Kock, \emph{Polynomial functors and polynomial monads},
  Mathematical Proceedings of the Cambridge Philosophical Society \textbf{154}
  (2013), no.~1, 153–192.

\bibitem[Gla03]{glasner2003ergodic}
E.~Glasner, \emph{Ergodic theory via joinings}, American Mathematical Society,
  2003.

\bibitem[Gle94]{book_91686061}
P.~Glendinning, \emph{Stability, instability and chaos. an introduction to the
  theory of nonlinear differential equations}, Cambridge Texts in Applied
  Mathematics, CUP, 1994.

\bibitem[Gog72]{Goguen1972a}
J.A. Goguen, \emph{Minimal realization of machines in closed categories},
  Bulletin of the American Mathematical Society \textbf{78} (1972), 777--783.

\bibitem[Gog73]{Goguen1973}
\bysame, \emph{Realisation is universal}, Mathematical System Theory \textbf{6}
  (1973), no.~4, 359--374.

\bibitem[Gog02]{Goguen1975}
\bysame, \emph{Discrete-time machines in closed monoidal categories. {I}},
  Journal of Computer and System Sciences \textbf{10} (1975-02), no.~1, 1--43.

\bibitem[Gra74]{Gray1974}
John~W. Gray, \emph{Formal category theory: Adjointness for 2-categories},
  Springer Berlin Heidelberg, 1974.

\bibitem[Gra80]{GRAY1980127}
J.W. Gray, \emph{Closed categories, lax limits and homotopy limits}, Journal of
  Pure and Applied Algebra \textbf{19} (1980), 127--158, Lecture Notes in
  Mathematics, Vol. 391.

\bibitem[GS68]{gelfand_shilov_1968}
I.~M. Gel’fand and G.~E. Shilov, \emph{Generalized functions}, vol.~I,
  Academic Press, Inc., New York, 1968.

\bibitem[GTWW75]{Goguen1975a}
J.A. Goguen, J.~W. Thatcher, E.~G. Wagner, and J.~B. Wright,
  \emph{Factorizations, congruences, and the decomposition of automata and
  systems}, Lecture Notes in Computer Science, Springer Berlin Heidelberg,
  1975, pp.~33--45.

\bibitem[Gui74]{guitart1974remarques}
R.~Guitart, \emph{Remarques sur les machines et les structures}, Cahiers de
  Topologie et Géométrie Différentielle Catégoriques \textbf{15} (1974),
  113--144.

\bibitem[Gui75]{CTGDC_1975__16_1_17_0}
\bysame, \emph{Monades involutives compl\'ement\'ees}, Cahiers de topologie et
  g\'eom\'etrie diff\'erentielle \textbf{16} (1975), no.~1, 17--101 (fr).
  \MR{396721}

\bibitem[Gui78]{guitart1978bimodules}
\bysame, \emph{Des machines aux bimodules}, Univ. Paris 7, apr 1978.

\bibitem[Gui80]{Guitart1980}
\bysame, \emph{Tenseurs et machines}, Cahiers de topologie et g\'eom\'etrie
  diff\'erentielle \textbf{21} (1980), no.~1, 5--62 (fr). \MR{569117}

\bibitem[Gui82]{guitart1982logique}
\bysame, \emph{Qu'est-ce que la logique dans une catégorie?}, Cahiers
  Topologie Géom. Différentielle \textbf{23} (1982), no.~2, 115--148.

\bibitem[Gui14a]{guitart2014I}
\bysame, \emph{Autocategories: {I}. a common setting for knots and
  2-categories}, Cahiers Top. Geo. Diff. Cat. \textbf{LV-1} (2014), 66--80.

\bibitem[Gui14b]{guitart2014II}
\bysame, \emph{Autocategories: {II}. autographic algebras}, Cahiers Top. Geo.
  Diff. Cat. \textbf{LV-2} (2014), 151--160.

\bibitem[Gui17]{guitart2017III}
\bysame, \emph{Autocategories: {III}. {R}epresentations, and expansions of
  previous examples}, Cahiers Top. Geo. Diff. Cat. \textbf{LVIII-1} (2017),
  67--80.

\bibitem[GVdB77]{Guitart1977}
R.~Guitart and L.~Van~den Bril, \emph{D\'ecompositions et {Lax-compl\'etions}},
  Cahiers de topologie et g\'eom\'etrie diff\'erentielle \textbf{18} (1977),
  no.~4, 333--407 (fr). \MR{470016}

\bibitem[Has02]{Hasegawa2002}
R.~Hasegawa, \emph{Two applications of analytic functors}, Theoretical Computer
  Science \textbf{272} (2002), no.~1–2, 113–175.

\bibitem[Heu08]{Heudin2008-ne}
J.-C. Heudin, \emph{Les cr{\'e}atures artificielles: Des automates aux mondes
  virtuels}, Odile Jacob, Paris, France, January 2008 (fr).

\bibitem[Hig71]{Higgins1971}
P.J. Higgins, \emph{Categories and groupoids}, Van Nostrand, New York, 1971,
  Reprinted in Theory and Applications of Categories, 7 (2005), pp. 1--195.

\bibitem[Hir22]{hirata2022notes}
K.~Hirata, \emph{Notes on lax ends}, 2022, \arXivPreprint{2210.01522}.

\bibitem[HM01]{Hermida2004}
Claudio Hermida and P.~Mateus, \emph{Paracategories {{II}}: Adjunctions,
  fibrations and examples from probabilistic automata theory}, Theoretical
  Computer Science \textbf{311} (2004-01), no.~1-3, 71--103.

\bibitem[HST14]{hofmann2014monoidal}
D.~Hofmann, G.J. Seal, and W.~Tholen, \emph{Monoidal topology: A categorical
  approach to order, metric, and topology}, vol. 153, Cambridge University
  Press, 2014.

\bibitem[IK86]{imkelly}
G.B. Im and G.M. Kelly, \emph{A universal property of the convolution monoidal
  structure}, Journal of Pure and Applied Algebra \textbf{43} (1986), no.~1,
  75--88.

\bibitem[Jac06]{Jacobs2006}
B.~Jacobs, \emph{A bialgebraic review of deterministic automata, regular
  expressions and languages}, Algebra, Meaning, and Computation, Springer
  Berlin Heidelberg, 2006, pp.~375--404.

\bibitem[Jac10]{Jacobs2010}
B.~Jacobs, \emph{Convexity, {{Duality}} and {{Effects}}}, Theoretical
  {{Computer Science}} (Cristian~S. Calude and Vladimiro Sassone, eds.), vol.
  323, Springer Berlin Heidelberg, 2010, pp.~1--19.

\bibitem[Jac16]{Jacobs2016}
\bysame, \emph{Introduction to coalgebra: Towards mathematics of states and
  observation}, Cambridge Tracts in Theoretical Computer Science, Cambridge
  University Press, 2016.

\bibitem[Jac18]{Jacobs2018}
\bysame, \emph{From probability monads to commutative effectuses}, Journal of
  logical and algebraic methods in programming \textbf{94} (2018), 200--237.

\bibitem[Jay88]{Jay1988}
C.B. Jay, \emph{Local adjunctions}, Journal of Pure and Applied Algebra
  \textbf{53} (1988), no.~3, 227–238.

\bibitem[JK01]{janelidze2001note}
George Janelidze and Gregory~M Kelly, \emph{A note on actions of a monoidal
  category}, Theory Appl. Categ \textbf{9} (2001), no.~61-91, 02.

\bibitem[Joy81]{8e80a3d617d02edd3db8014be1778638f0c16aaa}
A.~Joyal, \emph{Une théorie combinatoire des séries formelles}, Advances in
  Mathematics \textbf{42} (1981), no.~1, 1--82.

\bibitem[Joy85a]{joyal1985calcul}
\bysame, \emph{Calcul int{\'e}gral combinatoire et homologie du groupe
  sym{\'e}trique}, Comptes rendus math{\'e}matiques de l’Acad{\'e}mie des
  sciences, La soci{\'e}t{\'e} royale du Canada \textbf{7} (1985), 337--342.

\bibitem[Joy85b]{joyal1985regle}
\bysame, \emph{R{\`e}gle des signes en alg{\`e}bre combinatoire}, CR Math. Rep.
  Acad. Sci. Canada \textbf{7} (1985), no.~5, 285--290.

\bibitem[Joy86]{Joyal1986foncteurs}
\bysame, \emph{Foncteurs analytiques et especes de structures}, Combinatoire
  énumérative, Lecture Notes in Math., vol. 1234, Springer, Berlin, 1986,
  pp.~126--159.

\bibitem[JR79]{Joni_1979}
S.~A. Joni and G.-C. Rota, \emph{Coalgebras and bialgebras in combinatorics},
  Studies in Applied Mathematics (1979), 93--139.

\bibitem[Kap76]{Kaplansky1976}
I.~Kaplansky, \emph{An introduction to differential algebra}, 2nd ed., Hermann,
  1976.

\bibitem[Kec95]{Kechris1995}
A.S. Kechris, \emph{Polish groups}, Springer New York, 1995.

\bibitem[Kel89]{2catlimits}
G.M. Kelly, \emph{Elementary observations on 2-categorical limits}, Bulletin of
  the Australian Mathematical Society \textbf{39} (1989), 301--317.

\bibitem[Kel05a]{kelly}
\bysame, \emph{Basic concepts of enriched category theory}, Theory Appl.
  Categories \textbf{64} (2005), no.~10, vi+137. \MR{2177301}

\bibitem[Kel05b]{Kelly2005a}
\bysame, \emph{On the operads of {J}. {P}. {M}ay}, Repr. Theory Appl. Categ.
  \textbf{13} (2005), no.~13, 1--13.

\bibitem[KKR83]{kasangian1983cofibrations}
S.~Kasangian, G.M. Kelly, and F.~Rossi, \emph{Cofibrations and the realization
  of non-deterministic automata}, Cahiers de topologie et g{\'e}om{\'e}trie
  diff{\'e}rentielle cat{\'e}goriques \textbf{24} (1983), no.~1, 23--46.

\bibitem[KMS93]{Kolar1993}
I.~Kolá\v{r}, P.~Michor, and J.~Slovák, \emph{Natural operations in
  differential geometry}, Springer-Verlag, Berlin Heidelberg, 1993.

\bibitem[Koc72]{kock1972strong}
A.~Kock, \emph{Strong functors and monoidal monads}, Archiv der Mathematik
  \textbf{23} (1972), no.~1, 113--120.

\bibitem[Koc95]{kock1995monads}
\bysame, \emph{Monads for which structures are adjoint to units}, Journal of
  Pure and Applied Algebra \textbf{104} (1995), no.~1, 41--59.

\bibitem[Kol73]{Kolchin1973}
E.~Kolchin, \emph{Differential algebra and algebraic groups}, Academic Press,
  1973.

\bibitem[KR90]{Kasangian1990}
S.~Kasangian and R.~Rosebrugh, \emph{Glueing enriched modules and composition
  of automata}, Cahiers de topologie et g{\'e}om{\'e}trie diff{\'e}rentielle
  cat{\'e}goriques \textbf{31} (1990), no.~4, 283--290.

\bibitem[KSW97]{Katis1997}
P.~Katis, N.~Sabadini, and R.F.C. Walters, \emph{Bicategories of processes},
  Journal of Pure and Applied Algebra \textbf{115} (1997), no.~2, 141--178.

\bibitem[KSW10]{Katis2010}
P.~Katis, N.~Sabadini, and R.F.C. Walters, \emph{Feedback, trace and
  fixed-point semantics}, RAIRO - Theoretical Informatics and Applications
  \textbf{36} (2010), no.~2, 181--194 (eng).

\bibitem[Lab86]{a0e13a522d655a338dcd3eb0d86f31d83d15c9bd}
Gilbert Labelle, \emph{On combinatorial differential equations}, Journal of
  Mathematical Analysis and Applications \textbf{113} (1986), no.~2, 344–381.

\bibitem[Lam69]{Lambek1969}
J.~Lambek, \emph{Deductive systems and categories {II}. {S}tandard
  constructions and closed categories}, p.~76–122, Springer Berlin
  Heidelberg, 1969.

\bibitem[Lam89]{lambek1989multicategories}
\bysame, \emph{Multicategories revisited}, Categories in Computer Science and
  Logic (J.~W. Gray and A.~Scedrov, eds.), Contemporary Mathematics, vol.~92,
  American Mathematical Society, 1989, pp.~217--239.

\bibitem[Law94]{LAWVERE1994}
F.W. Lawvere, \emph{Cohesive toposes and cantor’s “lauter einsen”},
  Philosophia Mathematica \textbf{2} (1994), no.~1, 5–15.

\bibitem[Law07]{lawvere07cohesion}
\bysame, \emph{Axiomatic cohesion}, Theory Appl. Categories \textbf{19} (2007),
  no.~3, 41--49.

\bibitem[Law12]{Lawvere1973}
\bysame, \emph{Metric {{Spaces}}, {{Generalized Logic}}, and {{Closed
  Categories}}}, Rendiconti del Seminario Matematico e Fisico di Milano
  \textbf{43} (1973-12), no.~1, 135--166.

\bibitem[Lor21]{coend-calcu}
F.~Loregian, \emph{Coend calculus}, first ed., London Mathematical Society
  Lecture Note Series, vol. 468, Cambridge University Press, jul 2021, ISBN
  9781108746120.

\bibitem[Lor24]{loregian2024automata}
\bysame, \emph{Automata and coalgebras in categories of species}, Lecture Notes
  in Computer Science, Springer Nature Switzerland, 2024, p.~65–92.

\bibitem[LT23]{diff2rig}
F.~Loregian and T.~Trimble, \emph{Differential 2-rigs}, Electronic Proceedings
  in Theoretical Computer Science \textbf{380} (2023), 159–182.

\bibitem[Lur17]{LurieHA}
J.~Lurie, \emph{Higher algebra}, \url{http://www.math.harvard.edu/~lurie/},
  online version September 18, 2017., Preprint.

\bibitem[LV86]{10.1007/BFb0072518}
P.~Leroux and G.~X. Viennot, \emph{Combinatorial resolution of systems of
  differential equations, {I}: Ordinary differential equations}, Combinatoire
  {\'e}num{\'e}rative (Berlin, Heidelberg) (G.~Labelle and P.~Leroux, eds.),
  Springer Berlin Heidelberg, 1986, pp.~210--245.

\bibitem[LV88]{0cbbef7b84b3481ef76c8e88a86f80d411b7492f}
\bysame, \emph{Combinatorial resolution of systems of differential equations
  {IV}: separation of variables}, Discrete Mathematics \textbf{72} (1988),
  no.~1, 237--250.

\bibitem[LV12]{loday2012algebraic}
J.-L. Loday and B.~Vallette, \emph{Algebraic operads}, Grundlehren der Math.
  Wissenschaften, vol. 346, Springer, Heidelberg, 2012.

\bibitem[Mar00]{Marker2000}
D.~Marker, \emph{Model theory of differential fields}, Model Theory, Algebra,
  and Geometry (Deirdre Haskell, Anand Pillay, and Charles Steinhorn, eds.),
  vol.~39, Cambridge University Press, Cambridge, 2000, pp.~53--64.

\bibitem[May72]{may1972geometry}
J.P. May, \emph{The geometry of iterated loop spaces}, Springer-Verlag,
  Berlin-New York, 1972, Lectures Notes in Mathematics, Vol. 271.

\bibitem[May97]{MR1436914}
\bysame, \emph{Operads, algebras and modules}, Operads: {P}roceedings of
  {R}enaissance {C}onferences ({H}artford, {CT}/{L}uminy, 1995), Contemp.
  Math., vol. 202, Amer. Math. Soc., Providence, RI, 1997, pp.~15--31.

\bibitem[Mea55]{6771467}
George~H. Mealy, \emph{A method for synthesizing sequential circuits}, The Bell
  System Technical Journal \textbf{34} (1955), no.~5, 1045--1079.

\bibitem[Men03]{0b3f85c56e6c7e0598d42984000f394b12f187fd}
M.~Menni, \emph{About \reflectbox{$\mathsf{N}$}-quantifiers}, Applied
  Categorical Structures \textbf{11} (2003), no.~5, 421–445.

\bibitem[Men08]{a7260d9aa6cf11af5e8a6e16d6cb29f836d2ff04}
\bysame, \emph{Combinatorial functional and differential equations applied to
  differential posets}, Discrete Mathematics \textbf{308} (2008), no.~10,
  1864–1888.

\bibitem[Men24]{538ea74b06cef7ce61f2a4655af64d1abd38dcde}
\bysame, \emph{Decidable objects and molecular toposes}, Revista de la Unión
  Matemática Argentina (2024), 397–415.

\bibitem[ML98]{working-categories}
S.~Mac~Lane, \emph{Categories for the working mathematician}, second ed.,
  Graduate Texts in Mathematics, vol.~5, Springer-Verlag, New York, 1998.
  \MR{1712872}

\bibitem[MLM92]{mac1992sheaves}
S.~Mac~Lane and I.~Moerdijk, \emph{Sheaves in geometry and logic: A first
  introduction to topos theory}, Universitext, vol.~13, Springer, 1992.

\bibitem[MN93]{mendez1993colored}
M.~M{\'e}ndez and O.~Nava, \emph{Colored species, c-monoids, and plethysm,
  {I}}, Journal of combinatorial theory, Series A \textbf{64} (1993), no.~1,
  102--129.

\bibitem[Moo56]{Moore1956}
E.~F. Moore, \emph{Gedanken-experiments on sequential machines}, Automata
  Studies, Annals of Mathematical Studies, vol.~34, Princeton University Press,
  Princeton, N.J., 1956, pp.~129--153.

\bibitem[MP89]{makkai1989accessible}
M.~Makkai and R.~Paré, \emph{Accessible categories: the foundations of
  categorical model theory}, Contemporary Mathematics, vol. 104, American
  Mathematical Society, Providence, RI, 1989.

\bibitem[MS75]{Meseguer1975}
J.~Meseguer and I.~Sols, \emph{Automata in semimodule categories}, Lecture
  Notes in Computer Science, Springer Berlin Heidelberg, 1975, pp.~193--198.

\bibitem[MSS00]{Mateus1999}
P.~Mateus, A.~Sernadas, and C.~Sernadas, \emph{Realization of probabilistic
  automata: Categorical approach}, p.~237–251, Springer Berlin Heidelberg,
  2000.

\bibitem[MSS02]{markl2007operads}
M.~Markl, S.~Shnider, and J.~Stasheff, \emph{Operads in algebra, topology and
  physics}, Mathematical Surveys and Monographs, vol.~96, American Mathematical
  Society, Providence, RI, 2002.

\bibitem[MY91]{mendez1991mobius}
M.A. M{\'e}ndez and J.~Yang, \emph{M{\"o}bius species}, Advances in Mathematics
  \textbf{85} (1991), no.~1, 83--128.

\bibitem[Mé96]{7a8211434eed2811547338107aa8e1aa26e0ff5f}
M.A. Méndez, \emph{Species on digraphs}, Advances in Mathematics \textbf{123}
  (1996), no.~2, 243–275.

\bibitem[Nau77]{Naude1977}
G.~Naudé, \emph{On the adjoint situations between behaviour and realization},
  Quaestiones Mathematicae \textbf{2} (1977), 245--267.

\bibitem[Nau79]{Naude1979}
\bysame, \emph{Universal realization}, Journal of Computer and System Sciences
  \textbf{19} (1979), no.~3, 277--289.

\bibitem[Nav87]{ddbd4291a067388ac9ba6b092f4e901d9fd249f9}
O.~Nava, \emph{On the combinatorics of plethysm}, Journal of Combinatorial
  Theory, Series A \textbf{46} (1987), 212--251.

\bibitem[Net59]{5222698}
Douglas~B. Netherwood, \emph{Logical machine design ii: A selected
  bibliography}, IRE Transactions on Electronic Computers \textbf{EC-8} (1959),
  no.~3, 367--380.

\bibitem[NR85]{Nava_1985}
O.~Nava and G.-C. Rota, \emph{Plethysm, categories, and combinatorics},
  Advances in Mathematics \textbf{58} (1985), no.~1, 61–88.

\bibitem[Obr17]{obrad2017}
J.~Obradovi\'c, \emph{Cyclic operads: syntactic, algebraic and categorified
  aspects}, Ph.D. thesis, \'Ecole doctorale Sciences mathématiques de Paris
  centre, 2017.

\bibitem[Ols91]{olshanski1991unitary}
G.I. Olshanski, \emph{Unitary representations of infinite-dimensional classical
  groups and ergodic theory}, Topics in Representation Theory, Springer, 1991,
  pp.~269--463.

\bibitem[OV96]{okounkov1996young}
A.~Okounkov and A.M. Vershik, \emph{Young biproducts and random hook-lengths},
  Functional Analysis and its Applications \textbf{30} (1996), no.~2, 104--112.

\bibitem[PA07]{Pohl1970}
I.~Pohl and M.A. Arbib, \emph{Theories of abstract automata}, Mathematics of
  Computation \textbf{24} (1970-07), no.~111, 760.

\bibitem[Par10]{Par2010}
R.~Paré, \emph{Mealy morphisms of enriched categories}, Applied Categorical
  Structures \textbf{20} (2010), no.~3, 251–273.

\bibitem[Pet10]{Petrisan2010}
D.~Petri\c{s}an, \emph{Investigations into algebra and topology over nominal
  sets}, Phd thesis, University of Leicester, 2010.

\bibitem[Pit13]{Pitts2013}
A.M. Pitts, \emph{Nominal sets: names and symmetry in computer science},
  Cambridge University Press, May 2013.

\bibitem[Raj93a]{rajan1993derivatives}
D.~S. Rajan, \emph{The adjoints to the derivative functor on species}, Journal
  of Combinatorial Theory, Series A \textbf{62} (1993), no.~1, 93--106.

\bibitem[Raj93b]{RAJAN1993197}
\bysame, \emph{The equations {$D^kY = X^n$} in combinatorial species}, Discrete
  Mathematics \textbf{118} (1993), no.~1, 197--206.

\bibitem[Ros90]{rosenthal1}
K.I. Rosenthal, \emph{Quantales and their applications}, Pitman Research Notes
  in Mathematics Series, no. 234, Longman Scientific \& Technical, Harlow,
  1990. \MR{92e:06028}

\bibitem[Rot64]{Rota_1964}
G.-C. Rota, \emph{On the foundations of combinatorial theory {I}. theory of
  m{\"o}bius functions}, Zeitschrift f{\"u}r Wahrscheinlichkeitstheorie und
  Verwandte Gebiete (1964), no.~4, 340–368.

\bibitem[RSW98]{ROSEBRUGH_SABADINI_WALTERS_1998}
R.~Rosebrugh, N.~Sabadini, and R.F.C. Walters, \emph{Minimal realization in
  bicategories of automata}, Mathematical Structures in Computer Science
  \textbf{8} (1998), no.~2, 93–116.

\bibitem[Sch13]{28b28d818c32bd49e0fe548630beb90b8112e179}
U.~Schreiber, \emph{Differential cohomology in a cohesive infinity-topos},
  arXiv: Mathematical Physics (2013), null.

\bibitem[Shu08]{Shulman2008a}
M.~Shulman, \emph{Framed bicategories and monoidal fibrations}, Theory Appl.
  Categ. \textbf{20} (2008), No. 18, 650--738.

\bibitem[Sta00]{Stanley2000wc}
R.P. Stanley, \emph{Cambridge studies in advanced mathematics enumerative
  combinatorics: Series number 49: Volume 1}, Cambridge studies in advanced
  mathematics, Cambridge University Press, Cambridge, England, February 2000
  (en).

\bibitem[Str72]{Street1972}
R.~Street, \emph{The formal theory of monads}, J. Pure Appl. Algebra \textbf{2}
  (1972), no.~2, 149--168.

\bibitem[Str74a]{Street1974}
\bysame, \emph{Elementary cosmoi {I}}, p.~134–180, Springer Berlin
  Heidelberg, 1974.

\bibitem[Str74b]{StreetFibreYoneda1974}
\bysame, \emph{Fibrations and {Y}oneda lemma in a 2-category}, Proceedings
  Sydney Category Theory Seminar 1972/1973 (G.M. Kelly, ed.), Lecture Notes in
  Mathematics, vol. 420, Springer, 1974, pp.~104--133.

\bibitem[Str80]{street1980fibrations}
\bysame, \emph{Fibrations in bicategories}, Cahiers de topologie et géométrie
  différentielle catégoriques \textbf{21} (1980), no.~2, 111--160.

\bibitem[Str11]{strom2011modern}
J.~Strom, \emph{Modern classical homotopy theory}, vol. 127, American
  Mathematical Society Providence, RI, USA, 2011.

\bibitem[SVY97]{Senato1997}
D.~Senato, A.~Venezia, and J.~Yang, \emph{M\"{o}bius polynomial species},
  Discrete Mathematics \textbf{173} (1997), no.~1–3, 229–256.

\bibitem[SW78]{street1978yoneda}
R.~Street and R.~Walters, \emph{Yoneda structures on 2-categories}, J. Algebra
  \textbf{50} (1978), no.~2, 350--379.

\bibitem[Sza78]{1978145}
M.E. Szabo, \emph{Bicartesian closed categories}, Algebra of Proofs, Studies in
  Logic and the Foundations of Mathematics, vol.~88, Elsevier, 1978,
  pp.~145--162.

\bibitem[Tho64]{thoma1964unzerlegbaren}
E.~Thoma, \emph{Die unzerlegbaren, positiv-definiten klassenfunktionen der
  abzählbar unendlichen, symmetrischen gruppe}, Mathematische Zeitschrift
  \textbf{85} (1964), no.~1, 40--61.

\bibitem[Tie69]{MR0246294}
M.~Tierney, \emph{Categorical constructions in stable homotopy theory}, A
  seminar given at the ETH, Z\"urich, in 1967. Lecture Notes in Mathematics,
  No. 87, Springer-Verlag, Berlin-New York, 1969.

\bibitem[Tri]{Trimbled}
T.~Trimble, \emph{Towards a doctrine of operads.
  \href{http://ncatlab.org:8080/toddtrimble/published/Towards+a+doctrine+of+operads}{$n$Lab
  page}}.

\bibitem[Ulm68]{Ulmer1968}
F.~Ulmer, \emph{Properties of dense and relative adjoint functors}, Journal of
  Algebra \textbf{8} (1968), no.~1, 77–95.

\bibitem[Ven04]{Venema2006}
Y.~Venema, \emph{Automata and fixed point logic: A coalgebraic perspective},
  Information and Computation \textbf{204} (2006-04), no.~4, 637--678.

\bibitem[Ver12]{vershik2012nonfree}
A.M. Vershik, \emph{Nonfree actions of countable groups and their characters},
  Journal of Mathematical Sciences \textbf{185} (2012), 693--707.

\bibitem[vG99]{vanGlabbeek1999}
R.J. van Glabbeek, \emph{Petri nets, configuration structures and higher
  dimensional automata}, CONCUR'99 Concurrency Theory (Berlin, Heidelberg) (Jos
  C.~M. Baeten and Sjouke Mauw, eds.), Springer Berlin Heidelberg, 1999,
  pp.~21--27.

\bibitem[VK92]{vershik2002characters}
A.M. Vershik and S.V. Kerov, \emph{Characters and factor representations of the
  infinite symmetric group}, Soviet Mathematics Doklady \textbf{38} (1992),
  92--96.

\bibitem[Web07]{weber2007yoneda}
M.~Weber, \emph{Yoneda structures from 2-toposes}, Appl. Categ. Structures
  \textbf{15} (2007), no.~3, 259--323.

\bibitem[Web16]{Weber2016}
\bysame, \emph{Algebraic {K}an extensions along morphisms of internal algebra
  classifiers}, Tbilisi Math. J. \textbf{9} (2016), no.~1, 65--142.

\bibitem[Wei94]{Weibel1994}
C.A. Weibel, \emph{An introduction to homological algebra}, Cambridge Studies
  in Advanced Mathematics, vol.~38, Cambridge University Press, Cambridge,
  1994.

\bibitem[Wil90]{wilf-gfology}
H.S. Wilf, \emph{Generatingfunctionology}, third ed., {Academic Press},
  Wellesley, MA, 1990, available at
  \url{http://www.math.upenn.edu/%7Ewilf/DownldGF.html}. \MR{MR2172781}

\bibitem[Woo82]{wood1982abstract}
R.J. Wood, \emph{{A}bstract proarrows {I}}, Cahiers de topologie et géometrie
  différentielle categoriques \textbf{23} (1982), no.~3, 279--290.

\bibitem[Wra78]{Wraith1978}
G.~Wraith, \emph{Intuitionistic algebra: Some recent developments of topos
  theory}, Proceedings of the International Congress of Mathematicians (ICM)
  (Helsinki, Finland), 1978, pp.~331--337.

\bibitem[Yau16]{Yau2016_jn}
D.~Yau, \emph{Colored operads}, Graduate studies in mathematics, American
  Mathematical Society, Providence, RI, February 2016.

\bibitem[Yeh85]{yeh1985combinatorial}
Y.N. Yeh, \emph{On the combinatorial species of {J}oyal}, Ph.D. thesis, State
  University of New York at Buffalo, 1985.

\bibitem[Yeh86]{412e8ae396fc12c5284630b51a24e521afbc5fc7}
\bysame, \emph{The calculus of virtual species and {$\mathbb{K}$}-species},
  Combinatoire {\'e}num{\'e}rative (Berlin, Heidelberg) (G.~Labelle and
  P.~Leroux, eds.), Springer Berlin Heidelberg, 1986, pp.~351--369.

\bibitem[Yon60]{Yoneda}
N.~Yoneda, \emph{On {E}xt and exact sequences}, J. Fac. Sci. Univ. Tokyo Sect.
  I \textbf{8} (1960), 507--576 (1960).

\bibitem[Yor14]{yorgey_thesis}
B.A. Yorgey, \emph{Combinatorial species and labelled structures}, Ph.D.
  thesis, Computer and Information Science, University of Pennsylvania, sep
  2014, freely available online at
  \href{https://github.com/byorgey/thesis}{github}.

\bibitem[Zö76]{Zoberlein1976}
V.~Zöberlein, \emph{Doctrines on 2-categories.}, Mathematische Zeitschrift
  \textbf{148} (1976), 267--280.

\end{thebibliography}
\end{document}